\newtheorem{thm}{Theorem}[section]
\newtheorem{cor}[thm]{Corollary}
\newtheorem{lem}[thm]{Lemma}
\newtheorem{prop}[thm]{Proposition}
\theoremstyle{definition}
\newtheorem{defin}[thm]{Definition}
\newtheorem{rem}[thm]{Remark}
\newtheorem{exa}[thm]{Example}
\newtheorem{prob}[thm]{Problem}
\begin{document}

%
%
\newcommand{\BC}{\mathbb{C}}
\newcommand{\BR}{\mathbb{R}}
\newcommand{\BN}{\mathbb{N}}
\newcommand{\BZ}{\mathbb{Z}}

%
%
%

\title{On the summability of formal solutions of
     some linear $q$-difference-differential equations}

\author{Hidetoshi \textsc{Tahara}}

\date{}

\maketitle

\begin{abstract}
   The paper considers the summability of formal
solutions $\hat{X}(t,z)=\sum_{n \geq 0}X_n(z)t^n$ of some analytic 
linear $q$-difference-differential equations in the complex 
domain: the equation is a $q$-difference 
equation with respect to the time variable $t$ and is a partial 
differential equation with respect to the space variables $z$.
The discussion is done by using a new framework of $q$-Laplace
and $q$-Borel transforms developed by the author.
\end{abstract}

{\it Key words and phrases}: $q$-difference-differential equations,
       summability, formal solutions, $q$-Gevrey asymptotics, 
       $q$-Laplace transforms.

{\it 2010 Mathematics Subject Classification Numbers}: 
       Primary 35C20; Secondary 35C10, 39A13.

\renewcommand{\thefootnote}{\fnsymbol{footnote}}

\footnote[0]{
       This work was supported by JSPS KAKENHI Grant Number
       JP15K04966.}

%
%
%
%

\section{Introduction}\label{section 1}

   In Tahara \cite{fourier}, the author has introduced a 
new framework of $q$-Laplace and $q$-Borel transforms.
In this paper, we will apply its theory to the problem of the
summability of formal solutions of $q$-difference-differential 
equations of the form (\ref{1.1}) given below. 
The strategy of the argument was already explained in 
[\S 8, \cite{fourier}]: this paper gives a systematic study of
the problem of summability.
\par
   Let $q>1$ be fixed, and let 
$(t,z)=(t,z_1,\ldots,z_d) \in \BC \times \BC^d$ be the variables.
We define the $q$-difference operator $D_q$ in $t$ by
\[
    D_q(f(t,z))
    = \frac{f(qt,z)-f(t,z)}{qt-t}.
\]
For $\alpha=(\alpha_1,\ldots,\alpha_d) \in \BN^d$ 
(with $\BN=\{0,1,2,\ldots \}$) we write 
$|\alpha|=\alpha_1+\cdots+\alpha_d$ and 
$\partial_z^{\alpha}=\partial_{z_1}^{\alpha_1} 
                   \cdots \partial_{z_d}^{\alpha_d}$
with $\partial_{z_i}=\partial/\partial z_i$ ($i=1,\ldots,d$).
\par
    Let $m \in \BN^* (=\{1,2,\ldots \})$ and $\sigma>0$.
In this paper, we consider the linear $q$-difference-differential 
equation
\begin{equation}
    \sum_{j+\sigma |\alpha| \leq m} 
    a_{j,\alpha}(t,z)(tD_q)^j \partial_z^{\alpha}X = F(t,z)
    \label{1.1}
\end{equation}
under the following assumptions:
\par
   (1) $a_{j,\alpha}(t,z)$ ($j+\sigma |\alpha| \leq m$) and
$F(t,z)$ are holomorphic functions in a neighborhood of
$(0,0) \in \BC_t \times \BC_z^d$;
\par
   (2) (\ref{1.1}) has a formal power series solution
\begin{equation}
       \hat{X}(t,z)= \sum_{n \geq 0} 
       X_n(z) t^n \in {\mathcal O}_R[[t]] \label{1.2}
\end{equation}
where ${\mathcal O}_R$ (with $R>0$) denotes the set of all 
holomorphic functions on 
$D_R=\{z \in \BC^d \,;\, |z_i|<R \enskip (i=1,\ldots,d) \}$.

\par
\medskip
   As to the existence of such a formal solution of (1.1), 
see Remark \ref{Remark2.4}.
\par
   Our basic problem is:

\begin{prob}\label{Problem1.1}
   Under what condition can we get a true solution $W(t,z)$ of 
(\ref{1.1}) which admits $\hat{X}(t,z)$ as a $q$-Gevrey asymptotic 
expansion (in the sense of Definition \ref{Definition1.2} 
given below) ?
\end{prob}

   For $n \in \BN$ we write:
\[
   [n]_q= \frac{q^n-1}{q-1}, \quad [n]_q!=[1]_q[2]_q \cdots[n]_q.
\]
Of course, $[0]_q=0$ and $[0]_q!=1$. For 
$\lambda \in \BC \setminus \{0\}$ and $\epsilon >0$ we set
\begin{align*}
   &{\mathscr Z}_{\lambda}=\{-\lambda (q-1)q^m \,;\,
             m \in \BZ \}, \\
   &{\mathscr Z}_{\lambda, \epsilon}=
           \bigcup_{m \in \BZ} \{t \in \BC \,;\,
      |t+\lambda (q-1)q^m|<\epsilon |t| \}.
\end{align*}
We note that if $\epsilon >0$ is sufficiently small 
the set ${\mathscr Z}_{\lambda,\epsilon}$ is a disjoint union 
of closed disks. For $r>0$ we write $D_r^*=\{t \in \BC \,;\,
0<|t|<r \}$. Following Ramis-Zhang \cite{RZ} we define:

\begin{defin}\label{Definition1.2}
     (1) Let $\hat{X}(t,z)
      =\sum_{n \geq 0}X_n(z)t^n \in {\mathcal O}_{R}[[t]]$
and let $W(t,z)$ be a holomorphic function on 
$(D_r^* \setminus {\mathscr Z}_{\lambda}) \times D_R$
for some $r>0$. We say that {\it $W(t,z)$ admits 
$\hat{X}(t,z)$ as a $q$-Gevrey asymptitoc expansion 
on $(D_r^* \setminus {\mathscr Z}_{\lambda}) \times D_R$}, if 
there are $M>0$ and $H>0$ such that
\[
    \biggl| W(t,z)-\sum_{n=0}^{N-1} X_n(z)t^n \biggr|
      \leq \frac{M H^N}{\epsilon} [N]_q!|t|^N 
\]
holds on 
$(D_r^* \setminus {\mathscr Z}_{\lambda,\epsilon}) \times D_R$ 
for any $N=0,1,2,\ldots$ and any sufficiently small $\epsilon>0$.
\par
   (2) If there is a $W(t,z)$ as above, we say that the formal 
solution $\hat{X}(t,z)$ is $G_q$-summable in the direction 
$\lambda$.
\end{defin}

    This problem was already solved in 
Tahara-Yamazawa \cite{yama1, yama2} by using the framework of 
$q$-Laplace and $q$-Borel transforms developed by 
Ramis-Zhang \cite{RZ} and Zhang \cite{Z2}. 
In this paper, we will give a new proof by using $q$-Laplace 
and $q$-Borel transforms introduced in \cite{fourier}. 
\par
   Similar problems are discussed by Zhang \cite{Z1}, 
Marotte-Zhang \cite{MZ}, Ramis-Sauloy-Zhang \cite{RSZ} 
and Dreyfus \cite{dreyfus} in the 
$q$-difference equations, and by Malek \cite{M1,M2}, 
Lastra-Malek \cite{LM, LM2} and Lastra-Malek-Sanz \cite{LMS} 
in the case of $q$-difference-differential equations. But,
their equations are different from ours.
\par
   In this paper, we use the notations: $\BN=\{0,1,2,\ldots \}$
and $\BN^*=\{1,2,\ldots \}$. For an open set $W \subset \BC^d$
we denote by ${\mathcal O}(W)$ 
the set of all holomorphic functions on $W$. For an interval
$I=(\theta_1, \theta_2) \subset \BR$ we write
$S_I = \{ \xi \in {\mathcal R}(\BC \setminus \{0\}) \,;\,
      \theta_1< \arg \xi <\theta_2 \}$, where 
${\mathcal R}(\BC \setminus \{0\})$ denotes the universal 
covering space of $\BC \setminus \{0\}$.

%
%
%
%

\section{Main result}\label{section 2}

   For a holomorphic function $f(t,z)\, (\not\equiv 0)$ in a 
neighborhood of $(0,0) \in \BC_t \times \BC_z^d$, we define 
the order of the zeros of the function $f(t,z)$ at $t=0$ 
(we denote this by $\mathrm{ord}_t(f)$) by
\[
    \mathrm{ord}_t(f) = \min \{ p \in \BN \,;\, 
               (\partial_t^pf)(0,z) \not\equiv 0 
               \enskip \mbox{near $z=0$} \}.
\]
If $f(t,z) \equiv 0$ we set $\mathrm{ord}_t(f)=\infty$.
For $(a,b) \in \BR^2$ we write 
$C(a,b)=\{(x,y) \in \BR^2\,;\,
      x \leq a, y \geq b \}$. We set $C(a,\infty)=\emptyset$.
Then, the $t$-Newton polygon
$N_t(\ref{1.1})$ of equation (\ref{1.1}) is defined by
\[
    N_t(\ref{1.1})= \mbox{the convex hull of }
      \bigcup_{j+ \sigma |\alpha| \leq m} 
               C(j, \mathrm{ord}_t(a_{j,\alpha}))
\]
in $\BR^2$. Since we are considering (\ref{1.1}) under the 
assumption that (\ref{1.1}) has a formal solution $\hat{X}(t,z)$ 
in (\ref{1.2}), without loss of generality we may assume that 
\[
     \min \{\mathrm{ord}_t(a_{j,\alpha}) \,;\, 
                     j+\sigma|\alpha| \leq m \}=0.
\]
\par
   In this paper, we will consider the equation (\ref{1.1}) under 
the following conditions (A${}_1$), (A${}_2$) and (A${}_3$):
\par
\medskip
    (A${}_1$) There is an integer $m_0$ such that 
$0 \leq m_0<m$ and 
\[
        N_t(1.1)= \{(x,y) \in \BR^2 \,;\, x \leq m, \, 
                    y \geq \max\{0, x-m_0 \} \}.
\]
\par
   (A${}_2$) The following condition is satisfied:
\[
    |\alpha|>0 \Longrightarrow 
     (j, \mathrm{ord}_t(a_{j,\alpha})) \in int(N_t(\ref{1.1})),
\]
where $int(N_t(\ref{1.1}))$ denotes the interior of the set
$N_t(\ref{1.1})$ in $\BR^2$. 
\par
   (A${}_3$) In addition, we have
\[
    a_{m_0,0}(0,0) \ne 0, \quad 
    \frac{a_{m,0}(t,z)}{t^{m-m_0}} \Bigr|_{t=0,z=0} \ne 0.
\]

\par
\medskip
   By (A${}_1$), we have $a_{m,0}(t,z)=O(t^{m-m_0})$ 
(as $t \longrightarrow 0$), and so the 
second condition in (A${}_3$) makes sense.
\par
   The figure of $N_t(\ref{1.1})$ is as in Figure 1. In Figure 1, 
the boundary of $N_t(\ref{1.1})$ consists of a horizontal half-line
$\Gamma_0$, a segment $\Gamma_1$ and a vertical
half-line $\Gamma_2$, and $k_i$ is the slope of $\Gamma_i$ 
($i=0,1,2$).  By (A${}_1$) we have $k_1=1$.

\begin{figure}[htbp]
\begin{center}
%
\qquad
\includegraphics[scale=0.7]{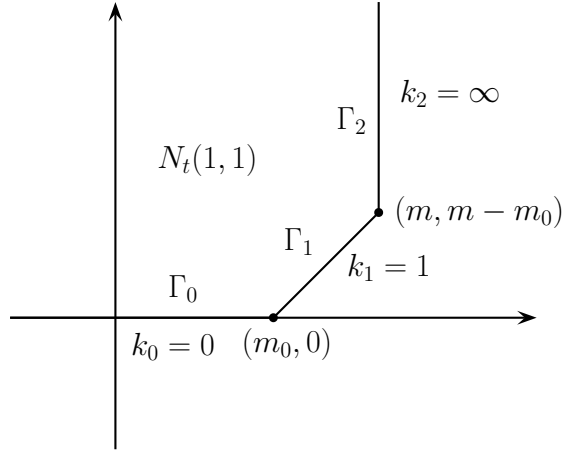}
%
%
\end{center}
\vspace{-3mm}
\caption{The $t$-Newton polygon of (1.1)} \label{Fig1}
\end{figure}

We note:

\begin{lem}\label{Lemma2.1} 
     By {\rm (A${}_1$)} and {\rm (A${}_2$)} we have
\begin{equation}
    \mathrm{ord}_t(a_{j,\alpha}) \geq 
    \left\{ \begin{array}{ll}
     \max \{0, j-m_0 \}, &\mbox{if $|\alpha|=0$}, \\
     \max \{1, j-m_0+1 \}, &\mbox{if $|\alpha|>0$}.
           \end{array}  \right.   \label{2.1}
\end{equation}
\end{lem}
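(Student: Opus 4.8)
The plan is to unwind the definition of the $t$-Newton polygon and read off the two estimates from the explicit shape of $N_t(\ref{1.1})$ provided by (A${}_1$). Fix a pair $(j,\alpha)$ with $j+\sigma|\alpha|\le m$. If $a_{j,\alpha}\equiv 0$ then $\mathrm{ord}_t(a_{j,\alpha})=\infty$ and (\ref{2.1}) holds trivially, so I assume $a_{j,\alpha}\not\equiv 0$ and put $p=\mathrm{ord}_t(a_{j,\alpha})\in\BN$. Since $C(j,p)$ is one of the sets appearing in the union defining $N_t(\ref{1.1})$, and the convex hull of a union of sets contains every one of those sets, we have $C(j,p)\subseteq N_t(\ref{1.1})$; in particular its corner $(j,p)$ lies in $N_t(\ref{1.1})$. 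By (A${}_1$),
\[
  N_t(\ref{1.1})=\{(x,y)\in\BR^2\,;\,x\le m,\ y\ge\max\{0,x-m_0\}\},
\]
so $(j,p)\in N_t(\ref{1.1})$ forces $p\ge\max\{0,j-m_0\}$, which is the first line of (\ref{2.1}).

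For the case $|\alpha|>0$ I would bring in (A${}_2$). First one records the interior of the polygon: because $0\le m_0<m$, the boundary of $N_t(\ref{1.1})$ is the union of the horizontal half-line $\Gamma_0$, the segment $\Gamma_1$ on the line $y=x-m_0$, and the vertical half-line $\Gamma_2$ (Figure \ref{Fig1}), so
\[
  int(N_t(\ref{1.1}))=\{(x,y)\in\BR^2\,;\,x<m,\ y>\max\{0,x-m_0\}\}.
\]
By (A${}_2$), $(j,p)\in int(N_t(\ref{1.1}))$, hence $p>\max\{0,j-m_0\}$; since $p$ and $\max\{0,j-m_0\}$ are integers, this upgrades to $p\ge\max\{0,j-m_0\}+1$, and the elementary identity $\max\{0,j-m_0\}+1=\max\{1,j-m_0+1\}$ yields the second line of (\ref{2.1}).

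I do not expect a genuine obstacle here: the lemma is essentially a bookkeeping consequence of the hypotheses. The only two points deserving a moment's care are the correct description of $int(N_t(\ref{1.1}))$ — which uses $m_0<m$ to guarantee that $\Gamma_2$ is a true vertical edge — and the step turning a strict inequality of real numbers into a $+1$ improvement, valid precisely because both quantities involved are integers.
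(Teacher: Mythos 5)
Your argument is correct and is exactly the bookkeeping the paper has in mind (the paper in fact states Lemma \ref{Lemma2.1} without proof, treating it as an immediate consequence of the definitions): the case $|\alpha|=0$ follows from $(j,\mathrm{ord}_t(a_{j,\alpha}))\in N_t(\ref{1.1})$ via (A${}_1$), and the case $|\alpha|>0$ from (A${}_2$) together with the integrality upgrade of the strict inequality. Your identification of the interior of $N_t(\ref{1.1})$ and your handling of the trivial case $a_{j,\alpha}\equiv 0$ are both correct.
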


   By the assumption, $a_{j,0}(t,z)$ ($m_0 \leq j \leq m$) 
can be expressed in the form
\[
    a_{j,0}(t,z)=t^{j-m_0} b_{j,0}(t,z) 
\]
for some holomorphic functions $b_{j,0}(t,z)$ 
($m_0 \leq j \leq m$) satisfying $b_{m_0,0}(0,0)$ $\ne 0$ and 
$b_{m,0}(0,0) \ne 0$.  We set
\[
     P_0(\lambda,z)= \sum_{m_0 \leq j \leq m}
            \frac{b_{j,0}(0,z)}{q^{j(j-1)/2}} \lambda^{j-m_0}
\]
and denote by $\lambda_1, \ldots, \lambda_{m-m_0}$ the
roots of $P_0(\lambda,0)=0$.  Since $b_{m_0,0}(0,0) \ne 0$, 
we have $\lambda_i \ne 0$ for all $i=1,2,\ldots,m-m_0$. We set
\[
     S = \bigcup_{i=1}^{m-m_0}
       \{\xi= \lambda_i \eta \,;\, \eta >0 \} 
      \enskip  \subset \enskip \BC_{\xi}
\]
which is a candidate of the set of singular directions at $z=0$.
The role of the set $S$ lies in

\begin{lem}\label{Lemma2.2} 
   For any $\lambda \in \BC \setminus(\{0\} \cup S)$ we can find 
a $\delta >0$, an interval $I=(\theta_1, \theta_2)$ with 
$\theta_1<\arg \lambda <\theta_2$ and an $R>0$ such that
$|P_0(\xi,z)| \geq \delta (1+|\xi|)^{m-m_0}$ holds on 
$S_I \times D_R$.
\end{lem}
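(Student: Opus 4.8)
The plan is to establish the estimate first on the slice $z=0$, where $P_0(\cdot,0)$ is an honest polynomial in $\xi$, and then extend it to a small polydisc $D_R$ by perturbation. Observe that $P_0(\xi,0)$ has degree exactly $m-m_0$: the coefficient of $\xi^{m-m_0}$ is $c:=b_{m,0}(0,0)/q^{m(m-1)/2}\neq0$, while $P_0(0,0)=b_{m_0,0}(0,0)/q^{m_0(m_0-1)/2}\neq0$, so $0$ is not a zero, and the zeros of $P_0(\cdot,0)$ are precisely $\lambda_1,\ldots,\lambda_{m-m_0}$. The first step is to choose the sector: since $\lambda\notin\{0\}\cup S$, one may fix a representative of $\arg\lambda$ that is distinct modulo $2\pi$ from each of the finitely many numbers $\arg\lambda_1,\ldots,\arg\lambda_{m-m_0}$, and then pick $\varepsilon_0>0$ so small that $[\arg\lambda-\varepsilon_0,\arg\lambda+\varepsilon_0]$ has length $<2\pi$ and meets no residue class $\arg\lambda_i\bmod 2\pi$; set $I=(\arg\lambda-\varepsilon_0,\arg\lambda+\varepsilon_0)$. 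Since $2\varepsilon_0<2\pi$, $S_I$ may be identified with an open sector of $\BC\setminus\{0\}$ whose closure in $\BC$, intersected with any disc $\{|\xi|\leq\rho\}$, is compact, contains $0$, and contains none of the roots $\lambda_i$.

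The second step is the bound $|P_0(\xi,0)|\geq\delta_0(1+|\xi|)^{m-m_0}$ on $S_I$ for a suitable $\delta_0>0$, obtained by splitting $S_I$ according to the size of $|\xi|$. For $|\xi|$ large the leading term dominates, so there is $\rho\geq1$ with $|P_0(\xi,0)|\geq\tfrac12|c|\,|\xi|^{m-m_0}$ whenever $|\xi|\geq\rho$; since $|\xi|/(1+|\xi|)\geq\tfrac12$ there, this is $\geq 2^{-(m-m_0+1)}|c|\,(1+|\xi|)^{m-m_0}$. On the compact set $K=\overline{S_I\cap\{|\xi|\leq\rho\}}\subset\BC$ the continuous function $\xi\mapsto P_0(\xi,0)$ is nowhere zero (its zeros, the $\lambda_i$, lie outside $K$ by the choice of $I$, and $0$ is not one of them), hence bounded below on $K$ by a positive constant, while $(1+|\xi|)^{m-m_0}\leq(1+\rho)^{m-m_0}$ on $K$; combining the two regimes yields the desired $\delta_0$.

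The third step is the perturbation in $z$. For $z\in D_R$,
\[
   |P_0(\xi,z)-P_0(\xi,0)|
   \leq\sum_{m_0\leq j\leq m}\frac{|b_{j,0}(0,z)-b_{j,0}(0,0)|}{q^{j(j-1)/2}}\,|\xi|^{j-m_0}
   \leq(m-m_0+1)\,\omega(R)\,(1+|\xi|)^{m-m_0},
\]
where $\omega(R):=\max_{m_0\leq j\leq m}\bigl(q^{-j(j-1)/2}\sup_{z\in D_R}|b_{j,0}(0,z)-b_{j,0}(0,0)|\bigr)\to0$ as $R\to0$ by continuity of the $b_{j,0}(0,\cdot)$ at the origin, and I have used $|\xi|^k\leq(1+|\xi|)^{m-m_0}$ for $0\leq k\leq m-m_0$. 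Taking $R>0$ so small that $(m-m_0+1)\,\omega(R)\leq\delta_0/2$ gives, on $S_I\times D_R$,
\[
   |P_0(\xi,z)|\geq|P_0(\xi,0)|-|P_0(\xi,z)-P_0(\xi,0)|\geq\tfrac{\delta_0}{2}(1+|\xi|)^{m-m_0},
\]
so the lemma holds with $\delta=\delta_0/2$ and the $I,R$ just fixed.

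Most of this is routine — a leading-term estimate at infinity, continuity on a compact set, and a small perturbation of the coefficients in $z$ — and I do not anticipate a serious obstacle. The one place to be careful is the first step: since $S_I$ lives on the universal covering ${\mathcal R}(\BC\setminus\{0\})$, one must keep the opening $2\varepsilon_0$ small enough (in particular below $2\pi$) that the sector, once lifted, does not wind around and re-enter one of the singular rays constituting $S$. Because $\arg\lambda\bmod 2\pi$ differs from each of the finitely many $\arg\lambda_i\bmod 2\pi$, such an $\varepsilon_0$ exists, and this is exactly the hypothesis $\lambda\notin\{0\}\cup S$ at work.
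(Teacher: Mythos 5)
Your proof is correct. The paper states Lemma \ref{Lemma2.2} without proof, so there is nothing to compare against line by line; your argument --- degree-$(m-m_0)$ leading-term domination for large $|\xi|$, a positive lower bound on the compact truncated closed sector (which avoids the root rays precisely because $\lambda\notin S$ and the opening of $I$ is taken small), and a continuity perturbation in $z$ shrinking $R$ --- is exactly the standard reasoning the paper implicitly relies on, and you correctly handle the two points that need care: including the boundary rays of the sector in the root-avoidance condition, and keeping the opening of $I$ below $2\pi$ so that $S_I\subset{\mathcal R}(\BC\setminus\{0\})$ projects injectively onto a plane sector.
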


   The purpose of this paper is to prove the following result.

\begin{thm}[Main theorem]\label{Theorem2.3}
    Suppose {\rm (A${}_1$)}, {\rm (A${}_2$)}, 
{\rm (A${}_3$)} and the additional condition
\begin{equation}
    \mathrm{ord}_t(a_{j,\alpha}) \geq j-m_0+2, 
     \quad \mbox{if $m_0 \leq j<m$ and $|\alpha|>0$}.
     \label{2.2}
\end{equation}
Let 
$\hat{X}(t,x)= \sum_{n \geq 0}X_n(z)t^n \in {\mathcal O}_R[[t]]$ 
be a formal solution of {\rm (\ref{1.1})}.  Then, for any 
$\lambda \in \BC \setminus(\{0\} \cup S)$ there are 
$r>0$, $R_1>0$, a holomorphic solution $W(t,z)$ of {\rm (\ref{1.1})} on 
$(D_r^* \setminus {\mathscr Z}_{\lambda}) \times D_{R_1}$ such that
$W(t,z)$ admits 
$\hat{X}(t,z)$ as a $q$-Gevrey asymptitoc expansion 
on $(D_r^* \setminus {\mathscr Z}_{\lambda}) \times D_{R_1}$.
\end{thm}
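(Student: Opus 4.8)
The strategy is the classical Borel--Laplace method adapted to the $q$-world, following the framework of \cite{fourier}. First I would reduce the problem to a more convenient form. Dividing equation (\ref{1.1}) by a unit (namely $b_{m_0,0}(t,z)$ after extracting the dominant factor), and using the Newton polygon conditions (A${}_1$)--(A${}_3$) together with Lemma \ref{Lemma2.1} and the extra condition (\ref{2.2}), I would rewrite the equation so that the leading part in $t=0$ is governed exactly by the polynomial $P_0(\lambda,z)$. The formal solution $\hat X(t,z)=\sum_{n\ge0}X_n(z)t^n$ should then be shown to be $q$-Gevrey of the relevant order; this is where the $t$-Newton polygon slope $k_2$ (equivalently the gap $m-m_0$) enters, and the recursion for $X_n(z)$ — obtained by substituting $\hat X$ into the rewritten equation and comparing coefficients of $t^n$ — yields bounds of the form $\|X_n\|_{R'}\le C A^n [n]_q!^{\kappa}$ for a suitable $\kappa$ depending on $m-m_0$, uniformly on a slightly shrunk polydisc $D_{R'}$. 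Establishing these Gevrey bounds, which requires controlling the loss of derivatives in $z$ coming from the terms with $|\alpha|>0$, is the first technical step; here (A${}_2$) and (\ref{2.2}) are precisely what guarantees that those terms are sufficiently subdominant (they live strictly inside $N_t(\ref{1.1})$, and in fact below the diagonal shifted by $2$) so that the $z$-derivatives do not destroy the estimate.

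Next I would apply the $q$-Borel transform of \cite{fourier} in the variable $t$ to the (rewritten) equation. The formal series $\hat X$ is sent to a convergent series $\hat\varphi(\xi,z)=\sum_{n\ge0}\dfrac{X_n(z)}{[n]_q!^{\kappa'}}\xi^n$ (with the appropriate normalization) which, by the Gevrey bounds just obtained, defines a holomorphic function near $\xi=0$ with values in ${\mathcal O}_{R'}$. The $q$-difference-differential equation for $\hat X$ is transformed into a functional equation for $\hat\varphi$ — a $q$-difference equation in $\xi$ coupled with the partial differential operators $\partial_z^\alpha$ — whose ``principal'' coefficient is essentially $P_0(\xi,z)$, because of the way $D_q$ and $(tD_q)^j$ interact with the $q$-Borel kernel and the factors $q^{-j(j-1)/2}$ built into $P_0$. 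I would then solve this functional equation for $\varphi(\xi,z)$ by a fixed-point argument in a space of holomorphic functions on $S_I\times D_{R_1}$ with controlled exponential growth in $|\xi|$. The key estimate is the division by $P_0(\xi,z)$: Lemma \ref{Lemma2.2} supplies, for $\lambda\notin\{0\}\cup S$, a sector $S_I$ containing $\arg\lambda$, a radius $R_1$ and $\delta>0$ with $|P_0(\xi,z)|\ge\delta(1+|\xi|)^{m-m_0}$ on $S_I\times D_{R_1}$, so that the inverse of the principal operator gains $(1+|\xi|)^{-(m-m_0)}$ decay — exactly enough to absorb the ``non-principal'' terms (which by (A${}_2$) and (\ref{2.2}) carry fewer powers of $\xi$, or equivalently weaker weights) and to close the contraction for small enough $R_1$ and on a subsector. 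The analytic continuation of $\hat\varphi$ from a neighborhood of $\xi=0$ to all of $S_I$, with the exponential growth bound $|\varphi(\xi,z)|\le C e^{K|\xi|^{\rho}}$, is the part I expect to be the main obstacle: it demands a careful majorant/Nagumo-norm estimate that simultaneously handles the unbounded sector in $\xi$, the shrinking disc in $z$ needed to absorb each $z$-derivative, and the $q$-dilation $\xi\mapsto q\xi$ appearing in the functional equation.

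Finally I would apply the $q$-Laplace transform of \cite{fourier} in the direction $\lambda$ to $\varphi(\xi,z)$. Because $\varphi$ is holomorphic on $S_I\times D_{R_1}$ with the right exponential growth, its $q$-Laplace transform $W(t,z)$ is a well-defined holomorphic function on $(D_r^*\setminus{\mathscr Z}_\lambda)\times D_{R_1}$ for some $r>0$; the poles along ${\mathscr Z}_\lambda$ are the familiar artefact of the $q$-Laplace kernel. The general inversion/summation properties of the pair of transforms from \cite{fourier} give two things at once: that $W(t,z)$ is an honest solution of (\ref{1.1}) — since the $q$-Borel transform converts (\ref{1.1}) into the functional equation solved by $\varphi$, and $q$-Laplace is its inverse — and that $W(t,z)$ admits $\hat X(t,z)$ as a $q$-Gevrey asymptotic expansion on $(D_r^*\setminus{\mathscr Z}_{\lambda,\epsilon})\times D_{R_1}$ in the sense of Definition \ref{Definition1.2}, the estimate $|W-\sum_{n<N}X_nt^n|\le (MH^N/\epsilon)[N]_q!|t|^N$ being the standard consequence of truncating the Borel series and bounding the remaining Laplace integral with the exponential bound on $\varphi$. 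Shrinking $r$ and $R_1$ as needed completes the argument. Apart from the analytic-continuation-with-growth step flagged above, the remaining work is bookkeeping: verifying that conditions (A${}_1$)--(A${}_3$) and (\ref{2.2}) are exactly what make the Newton-polygon data consistent with a single level $k_2=1/(m-m_0)$ of $q$-Gevrey summability, so that one $q$-Borel and one $q$-Laplace transform suffice.
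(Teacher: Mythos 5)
Your overall architecture coincides with the paper's: establish $q$-Gevrey bounds for the $X_n$, apply a formal $q$-Borel transform, solve the transformed equation on a sector $S_I$ by successive approximation using the lower bound on $P_0$ from Lemma \ref{Lemma2.2} and Nagumo norms to absorb the $z$-derivatives, then return by the $q$-Laplace transform in the direction $\lambda$ and invoke the Watson-type lemma. However, two of your quantitative choices are wrong in a way that would derail the execution. First, the summability level here is $1$, coming from the slope-$1$ segment $\Gamma_1$ of the Newton polygon (the paper notes $k_1=1$ under (A${}_1$)); it does not depend on $m-m_0$. Proposition \ref{Proposition5.1} gives exactly $|X_n(z)|\le Ch^n[n]_q!$ (and Example \ref{Example5.3} shows this is sharp), and the Borel transform used is $\sum X_n t^{n+1}\mapsto\sum X_n\xi^n/[n]_q!$ with exponent $\kappa=1$. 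Your ``$\kappa$ depending on $m-m_0$'' and the concluding ``level $k_2=1/(m-m_0)$'' are incorrect; with any $\kappa\ne1$ the Borel transform no longer converts $(tD_q)^j$ into multiplication by powers of $\xi$ and the whole scheme collapses. The quantity $m-m_0$ enters only through the decay $(1+|\xi|)^{-(m-m_0)}$ gained upon dividing by $P_0$, which is what absorbs the non-principal terms. Second, the growth class in the Borel plane is $\exp\bigl((\log|\xi|)^2/(2\log q)\bigr)$ (equivalently $|u(\lambda q^n,z)|\le Ch^n[n]_q!$, cf.\ Lemma \ref{Lemma4.3} and Proposition \ref{Proposition3.5}), not $e^{K|\xi|^{\rho}}$: a function with genuine exponential growth of positive order has a divergent $q$-Laplace transform, since the kernel $\mathrm{Exp}_q$ only provides $\exp(-(\log)^2/(2\log q))$-type decay.

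Beyond these, the paper's route contains two reductions that are not mere bookkeeping and that your plan omits. The equation is first multiplied by $t^{m_0}$ and the operators $t^j(tD_q)^j$ are rewritten, via the combinatorial identity of Lemma \ref{Lemma5.4}, as linear combinations of $t^{j-i}(t^2D_q)^i$; this is what makes the formal $q$-Borel transform (which sends $(t^2D_q)^iX$ to $\xi^iu$, not $(tD_q)^jX$ to anything usable) convert (\ref{1.1}) into the $q$-convolution equation (\ref{5.11}) with principal symbol $\xi^{m_0}P_0(\xi,z)$ — the factors $q^{-j(j-1)/2}$ in $P_0$ come precisely from this identity. Moreover, one must first subtract a long initial segment $\sum_{n\le\mu}X_n(z)t^n$ of the formal solution so that the contraction constant $\beta$ in Proposition \ref{Proposition4.1} (which involves $1/[N]_q$ with $N=m_0+\mu$) is less than $1$; without this the fixed-point argument you propose need not close. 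Conditions (A${}_2$) and (\ref{2.2}) are used exactly where you expect, to ensure the terms with $|\alpha|>0$ acquire an extra factor $1*_q(\cdot)$ after the Borel transform, giving the gain of one unit in the $\phi$-scale needed to compensate for the Nagumo loss in $z$.
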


\begin{rem}\label{Remark2.4}
   (1) Set
\[
     P_1(\lambda;z)= \sum_{0 \leq j \leq m_0}a_{j,0}(0,z) \lambda^j.
\]
If $P_1([n]_q;0) \ne 0$ holds for any $n \in \BN$, the equation 
(\ref{1.1}) has a unique formal solution 
$\hat{X}(t,x) \in {\mathcal O}_R[[t]]$ for some $R>0$.
\par
   (2) The additional condition (\ref{2.2}) seems to be a little bit 
strange, but as is seen in the proof of Theorem \ref{Theorem2.3} in
\S 5 we need to use this condition. At present, the author does not 
know how to remove it.
\par
   (3) In the case where the condition (\ref{2.2}) is not satisfied, 
as is seen in \S 6, by setting $q_1=q^{1/4}$ and $t=\tau^2$ we can 
trasnform (\ref{1.1}) to 
\begin{equation}
    \sum_{j+\sigma |\alpha| \leq m}
            A_{j,\alpha}(\tau,z) 
           \biggl( \frac{1}{[4]_{q_1}}
           \Bigl( (q_1-1)(\tau D_{q_1})^2 
      +  2(\tau D_{q_1}) \Bigr) \biggr)^j \partial_z^{\alpha}Y
     = G(\tau,z) \label{2.3}
\end{equation}
where 
\begin{align*}
    &A_{j,\alpha}(\tau,z)= a_{j,\alpha}(\tau^2,z) \quad
            (j+\sigma |\alpha| \leq m), \\
    &Y(\tau,z)
          =X(\tau^2,z) = \sum_{n \geq 0} X_n(z) \tau^{2n}, \\
    &G(\tau,z)= F(\tau^2,z).
\end{align*}
Since this equation (\ref{2.3}) satisifies (\ref{2.2}), we can apply 
Theorem \ref{Theorem2.3} to (\ref{2.3}), and obtain the 
$G_q$-summability of $Y(\tau,z)$. Thus, the constraint by 
(\ref{2.2}) is not a big problem. For details, see \S 6.
\end{rem}

\begin{exa}\label{Example2.5}
    Let us consider
\begin{equation}
    a t (t^2D_q)^2X+b(tD_q)X+cX
          + t^{n_1} \partial_z^{\alpha_1}(tD_q)X
          + t^{n_0} \partial_z^{\alpha_0}X =F(t,z)
    \label{2.4}
\end{equation}
where $a \ne 0$, $b \ne 0$, $c \in \BC$, 
$n_i \in \BN^*$ ($i=0,1$) and $\alpha_i \in \BN^*$ ($i=0,1$).
Then, this equation satisfies {\rm (A${}_1$)}, {\rm (A${}_2$)} and 
{\rm (A${}_3$)} with $m_0=1$, $m=2$ and $k_1=1$. We note that 
{\rm (A${}_2$)} coresponds to the condition ``$n_1 \geq 1$ and 
$n_0 \geq 1$", and that {\rm (A${}_3$)} coresponds to the condition 
``$a \ne 0$ and $b \ne 0$".  In this case, (\ref{2.2}) corresponds to 
the condition $n_1 \geq 2$.  Thus, if $n_1 \geq 2$ we can apply 
Theorem \ref{Theorem2.3} to (\ref{2.4}).
\end{exa}

    We note that Theorem \ref{Theorem2.3} is already proved in 
\cite{yama1}. The purpose of this paper is to give a new proof in 
the framework of $q$-Laplace and $q$-Borel transforms given 
in \cite{fourier}. This new proof produces various new tools 
and techniques which will be very useful in treating other problems, 
and by this reason, the author believes that it is worthy to write
this paper.
\par
   The rest part of this paper is organized as follows.
In the next \S 3, we summarize basic results of $q$-Laplace
and $q$-Borel transforms in \cite{fourier}. In \S 4, we do
some preparatory discussins which are needed in the proof
of Theorem \ref{Theorem2.3}. In \S 5, we prove 
Theorem \ref{Theorem2.3} by using a result in \S 4.. In the 
last \S 6, we discuss the case without the condition (\ref{2.2}).

%
%
%
%

\section{$q$-Laplace and $q$-Borel transforms}\label{section 3}

   In this section, we summarize basic results on $q$-Laplace 
and $q$-Borel transforms developed in \cite{fourier} with small
modifications.  We always suppose: $q>1$.

%
%
%

\subsection{$q$-Laplace transforms}\label{subsection 3.1}

   Let $\lambda \in \BC \setminus \{0\}$,
and set $\lambda q^{\BZ}=\{\xi= \lambda q^m \,;\, m \in \BZ \}$.
For a function $f(\xi,z)$ on $\lambda q^{\BZ} \times D_R$, 
we define the $q$-Laplace transform 
$F(t,z)={\mathscr L}_q^{\lambda}[f](t,z)$ of $f(\xi,z)$ in
the direction $\lambda$ by
\begin{equation}
   {\mathscr L}_q^{\lambda}[f](t,z)
    = \int_{\lambda q^{\BZ}}
    \textrm{Exp}_q(-q \xi/t)\, f(\xi,z) d_q\xi, \label{3.1}
\end{equation}
where 
\[
   \textrm{Exp}_q(x)
     =\sum_{n \geq 0} \frac{q^{n(n-1)/2}}{[n]_q!}x^n
     = \frac{1}{\displaystyle \prod_{m \geq 0}(1-q^{-m-1}(q-1)x)}
\]
(the first equality is the definition of $\textrm{Exp}_q(x)$ and 
the second equality is from Euler's indentity), 
and the integral in (\ref{3.1}) is taken in the following sense: 
\[
    \int_{\lambda q^{\BZ}} g(\xi) d_q \xi
    = \sum_{m \in \BZ} g(\lambda q^m) (\lambda q^{m+1}- \lambda q^m),
\]
which is a discretization of the classical integral.
\par
    In the case $\lambda=1$ we write ${\mathscr L}_q$ instead of 
${\mathscr L}_q^{1}$. If we define the operator $M_{\lambda}$ by 
$M_{\lambda}[f](\xi)=f(\lambda \xi)$ we have 
${\mathscr L}_q^{\lambda}= 
  \lambda(M_{1/\lambda} \circ {\mathscr L}_q \circ M_{\lambda})$.
Since ${\mathscr L}_q$ is investigated quite well in 
\cite{fourier}, we have the following properties (see Example 3.1, 
Proposition 3.2 and Proposition 7.1 in \cite{fourier}).

\par
\medskip
    (3-1-1) If $f=\xi^n$ ($n \in \BN$), 
we have ${\mathscr L}_q^{\lambda}[f] = [n]_q! t^{n+1}$.
\par
    (3-1-2) If $f(\xi,z)$ is a function on 
$\lambda q^{\BZ} \times D_R$ which is holomorphic in $z \in D_R$ 
and if
\begin{align*}
    &|f(\lambda q^n,z)| \leq Ch^n [n]_q! 
            \enskip \mbox{on $D_R$}, \enskip n=0,1,2,\ldots, \\
    &|f(\lambda q^{-m},z)| \leq AB^m 
              \enskip \mbox{on $D_R$}, \enskip m=1,2,\ldots
\end{align*}
for some $C>0$, $h>0$, $A>0$ and $0<B<q$, 
the function $F(t,z)={\mathscr L}_q^{\lambda}[f](t,z)$
is well-defined as a holomorphic function on 
$(D_r^* \setminus {\mathscr Z}_{\lambda}) \times D_R$ for 
a sufficiently small $r>0$.
\par
    (3-1-3) In addition, $F(t,z)$ has at most simple poles on 
${\mathscr Z}_{\lambda}$ with respect to $t$, and there is an
$H>0$ such that
\[
    |F(t,z)| \leq \frac{H}{\epsilon} |t|^{\alpha}
     \quad \mbox{on $(D_r^* \setminus 
                 {\mathscr Z}_{\lambda, \epsilon}) \times D_R$}
\]
for any $\epsilon >0$, where 
$\alpha=(\log q-\log B)/\log q$ ($>0$).
\par
   (3-1-4) The following result gives a Watson type lemma.

\begin{prop}[Watson type lemma]\label{Proposition 3.1}
   Let $f(\xi,z)$ be a function on $\lambda q^{\BZ} \times D_R$, 
and let $c_k(z)$ {\rm (}$k=0,1,2,\ldots${\rm )} be functions on $D_R$. 
Suppose that there are $C>0$, $h>0$, $A>0$ and $h_1>0$ such that
\begin{align}
    &|f(\lambda q^n,z)| \leq C h^n [n]_q! \enskip
            \mbox{on $D_R$}, \enskip n=0,1,2, \cdots, \label{3.2}\\
    &\Bigl| f(\xi,z)- \sum_{k=0}^{N-1} 
                     c_k(z) \xi^k \Bigr|
             \leq A{h_1}^N |\xi|^N \enskip
             \mbox{on $D_R$}  \label{3.3}\\
    &\qquad \qquad \quad 
        \mbox{for $\xi=\lambda q^{-m}$ {\rm (}$m=1,2, \cdots${\rm )} and 
                               $N=0,1,2,\ldots$}. \notag
\end{align}
Then, there are $M>0$ and $H>0$ such that
\[
    \Bigl| {\mathscr L}_q^{\lambda}[f](t,z) -
        \sum_{k=0}^{N-1} c_k(z) [k]_q! t^{k+1} \Bigr|
        \leq \frac{MH^N}{\epsilon} [N]_q! |t|^{N+1}
\]
on $(D_r^* \setminus {\mathscr Z}_{\lambda, \epsilon}) 
       \times D_R$ for any $\epsilon >0$ and $N=0,1,2,\ldots$.
\end{prop}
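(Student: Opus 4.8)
The plan is to prove the Watson type lemma by splitting the $q$-Laplace integral over $\lambda q^{\BZ}$ into the ``small'' part ($\xi = \lambda q^{-m}$, $m \geq 1$) and the ``large'' part ($\xi = \lambda q^n$, $n \geq 0$), and estimating each contribution to
\[
    {\mathscr L}_q^{\lambda}[f](t,z) - \sum_{k=0}^{N-1} c_k(z)[k]_q! t^{k+1}
    = \int_{\lambda q^{\BZ}} \textrm{Exp}_q(-q\xi/t)\Bigl(f(\xi,z) - \sum_{k=0}^{N-1} c_k(z)\xi^k\Bigr)\, d_q\xi,
\]
where on the right I have used property (3-1-1) to rewrite $\sum_{k=0}^{N-1} c_k(z)[k]_q! t^{k+1} = \sum_{k=0}^{N-1} c_k(z)\,{\mathscr L}_q^{\lambda}[\xi^k]$. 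The key analytic input is a good pointwise bound on the kernel $\textrm{Exp}_q(-q\xi/t)$ away from its poles, namely that for $t \notin {\mathscr Z}_{\lambda,\epsilon}$ and $\xi \in \lambda q^{\BZ}$ one has a bound of the shape $|\textrm{Exp}_q(-q\xi/t)| \leq \frac{C_0}{\epsilon}\, q^{-\nu(\nu-1)/2} B_0^{\nu}$ when $\xi = \lambda q^{\nu}$, controlling the super-exponential decay as $\nu \to +\infty$ and the tame growth as $\nu \to -\infty$; this is exactly the kind of estimate underlying (3-1-2) and (3-1-3), and I would extract it from \cite{fourier} (or reprove it from the Euler product form of $\textrm{Exp}_q$).

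For the large part, I would use hypothesis (\ref{3.2}) together with the crude bound $\bigl|\sum_{k=0}^{N-1} c_k(z)\xi^k\bigr| \leq (\text{something})$ — actually it is cleaner to estimate the two pieces $f(\lambda q^n,z)$ and the Taylor polynomial separately. The term $\int$ over $\xi = \lambda q^n$, $n \geq 0$, of $\textrm{Exp}_q(-q\xi/t) f(\xi,z)\,d_q\xi$ is bounded, using the kernel's $q^{-n(n-1)/2}$ decay against $f$'s $[n]_q! \sim q^{n(n-1)/2}(q-1)^{-n}$ growth, by a convergent geometric-type series times $|t|^{N+1}[N]_q!H^N$ after one observes that each individual summand, when $|t|$ is small, is dominated by $|t|^{N+1}$ times a controllable factor; more precisely the whole large-part sum is $O(|t|)$ but one then trades powers of $|t|$ against $[N]_q!H^N|t|^N$ for $|t| < r$ small, which is where $r$ gets chosen. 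For the small part, $\xi = \lambda q^{-m}$ with $m \geq 1$, hypothesis (\ref{3.3}) gives $|f(\xi,z) - \sum_{k<N} c_k(z)\xi^k| \leq A h_1^N |\xi|^N$, and against the kernel bound (tame for $\nu = -m \to -\infty$) this produces $\sum_{m \geq 1} \frac{C_0}{\epsilon} B_0^{-m}\cdot A h_1^N |\lambda|^N q^{-mN}\cdot(\lambda q^{-m+1} - \lambda q^{-m})$, a convergent sum in $m$ (since $B_0 < q$, or by absorbing) whose value is $\leq \frac{A'}{\epsilon} h_1'^N |\lambda|^N$; one then checks $|\lambda|^N \leq$ const$\cdot[N]_q!|t|^{N+1}(\text{const})^N$ is \emph{false} in general, so instead I keep a factor $|t|^{N+1}$ by first integrating: the correct bookkeeping is that the small-part integral of $\textrm{Exp}_q(-q\xi/t)\xi^N\,d_q\xi$ is comparable to $[N]_q!t^{N+1}$ up to the tail, so the remainder inherits the $[N]_q!|t|^{N+1}$ scaling with an extra $h_1^N$.

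The cleanest route, and the one I would actually write, avoids re-deriving kernel asymptotics by hand: apply property (3-1-2)/(3-1-3) to the function $g_N(\xi,z) := f(\xi,z) - \sum_{k=0}^{N-1} c_k(z)\xi^k$. By (\ref{3.2}) and the elementary bound on $c_k$ obtained from (\ref{3.3}) with $N = k+1$ (giving $|c_k(z)| \leq A h_1^{k+1}\cdot(\text{from } \xi=\lambda q^{-1})$, hence $|c_k(z)|\leq A' h_1^k$ after normalizing), the function $g_N$ satisfies $|g_N(\lambda q^n,z)| \leq C' h'^n [n]_q! N$-dependent constants that I must track carefully, and $|g_N(\lambda q^{-m},z)| \leq A h_1^N |\lambda|^N q^{-mN}$, i.e.\ it decays at $-\infty$ like $B^m$ with $B = q^{-N}|\lambda|^{?}$ — this is where the ``gain'' of $N$ powers comes from. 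Feeding this into (3-1-3) with the improved decay exponent $\alpha = \alpha_0 + N$ yields exactly $|{\mathscr L}_q^{\lambda}[g_N](t,z)| \leq \frac{H}{\epsilon}|t|^{N+1}\cdot(\text{factor})$, and matching the factor to $MH^N[N]_q!$ is then a matter of combining the $N$-dependent constants from the two hypotheses. \textbf{The main obstacle} will be controlling the dependence on $N$ of the constants in the large-part estimate: the Taylor polynomial $\sum_{k=0}^{N-1} c_k(z)\xi^k$ contributes no help there (hypothesis (\ref{3.2}) only bounds $f$, not $g_N$, on $\lambda q^{\BZ}_{\geq 0}$), so I must bound $\bigl|\sum_{k=0}^{N-1} c_k(z)(\lambda q^n)^k\bigr|$ crudely by $\sum_{k<N} A' h_1^k |\lambda|^k q^{nk} \leq A'' (h_1|\lambda|)^N q^{nN}/(\cdots)$ and verify that, after pairing with the kernel's $q^{-n(n-1)/2}$ decay and summing over $n$, the resulting $N$-dependence is still of the admissible form $MH^N[N]_q!|t|^{N+1}$ for $|t|<r$ with $r$ small enough; this forces a careful choice of $H$ in terms of $h$, $h_1$, $|\lambda|$, and is the one genuinely delicate computation.
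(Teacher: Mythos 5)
The paper gives no proof of this proposition: it appears in \S 3 as item (3-1-4) of the results imported verbatim from \cite{fourier}, so there is no in-paper argument to compare yours against. That said, your architecture is the standard (and correct) one for a $q$-Watson lemma: use (3-1-1) to write the error as ${\mathscr L}_q^{\lambda}[g_N]$ with $g_N=f-\sum_{k<N}c_k\xi^k$, split the discrete integral at $|\xi|=|\lambda|$, use (\ref{3.3}) on the small side and (\ref{3.2}) together with the derived bound $|c_k(z)|\le A'h_1'^{\,k}$ on the large side, all against the kernel bound $|\mathrm{Exp}_q(-q\xi/t)|\le \frac{C_0}{\epsilon}\,q^{-J(J-1)/2}$, $J=\log_q\bigl(q(q-1)|\xi|/|t|\bigr)$, valid for $t\notin{\mathscr Z}_{\lambda,\epsilon}$.

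Two concrete gaps remain. First, the step ``the whole large-part sum is $O(|t|)$ but one then trades powers of $|t|$ against $[N]_q!H^N|t|^N$ for $|t|<r$ small'' is backwards and would fail: for $|t|\to 0$ one has $|t|\ge |t|^{N+1}$, so an $O(|t|)$ bound can never be upgraded to $O(|t|^{N+1})$. What actually rescues the large part is that it is $O(|t|^{\infty})$: already at the first node $\xi=\lambda$ the kernel contributes $q^{-\nu_0(\nu_0-1)/2}$ with $\nu_0=\log_q\bigl(q(q-1)|\lambda|/|t|\bigr)$, and the elementary inequality $\sup_{\nu_0>0}q^{-\nu_0(\nu_0-1)/2+(N+1)\nu_0}=q^{(N+3/2)^2/2}\le [N]_q!\,H_0^{N}q^{c}$ (using $[N]_q!\ge q^{N(N-1)/2}$) is what converts this super-polynomial decay into the required $\frac{MH^N}{\epsilon}[N]_q!|t|^{N+1}$ with $H$ independent of $N$. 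Second, the decisive quantitative fact --- which you assert (``comparable to $[N]_q!t^{N+1}$ up to the tail'') but never establish --- is the uniform-in-$N$ Gaussian-sum estimate
\[
\sum_{\xi\in\lambda q^{\BZ}}\bigl|\mathrm{Exp}_q(-q\xi/t)\bigr|\,|\xi|^{N+1}(q-1)
\;\le\; \frac{C}{\epsilon}\,[N]_q!\,G^{N}\,|t|^{N+1},
\]
with $C,G$ independent of $N$, $t$ and $\epsilon$; it follows from $\max_{J}\bigl(-J(J-1)/2+(N+1)J\bigr)=(N+3/2)^2/2$ together with $q^{N^2/2+3N/2}\le[N]_q!\,q^{2N}(q-1)^{-N}$-type comparisons. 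With this display in hand, the small part is immediate from (\ref{3.3}), and your flagged ``delicate computation'' for the Taylor-polynomial contribution on the large side does close (apply the display for each $k<N$ and distinguish whether the Gaussian peak $|\xi|\approx|t|q^{k+1/2}/(q-1)$ lies above or below $|\lambda|$). Until these two points are written out, the proposal is a correct plan rather than a proof.
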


%
%
%

\subsection{$q$-Borel transforms}\label{subsection 3.2}

    For a holomorphic function $F(t,z)$ on 
$(D_r^* \setminus {\mathscr Z}_{\lambda}) \times D_R$ having at 
most simple poles on ${\mathscr Z}_{\lambda}$ with respect to $t$, 
we define the $q$-Borel transform $f(\xi,z)={\mathscr B}_q[F](\xi,z)$ 
of $F(t,z)$ in the direction $\lambda$ by
\begin{equation}
    {\mathscr B}_q^{\lambda}[F](\xi,z)= \frac{1}{2\pi i} 
       \int_{|t|=\rho_{\xi}} F(t,z) \textrm{exp}_q(\xi/t) 
             \frac{dt}{t^2}, \quad
       \xi=\lambda q^k \enskip (k \in \BZ) \label{3.4}
\end{equation}
where $\rho_{\xi}>0$ is sufficiently small depending on $\xi$,
\[
   \textrm{exp}_q(x)=\sum_{n \geq 0} \frac{1}{[n]_q!}x^n
       = \prod_{m \geq 0}(1+q^{-m-1}(q-1)x)
\]
(the first equality is the definition of $\textrm{exp}_q(x)$ and 
the second equality is from Euler's indentity), and the integral
in (\ref{3.4}) is taken as a contour integral along the circle
$\{t \in \BC \,;\, |t|=\rho_{\xi} \}$ in the complex plane.
\par
    In the case $\lambda=1$ we write ${\mathscr B}_q$ instead of
${\mathscr B}_q^{1}$.  Since we have 
${\mathscr B}_q^{\lambda}= 
  (M_{1/\lambda} \circ {\mathscr B}_q \circ M_{\lambda})/\lambda$ 
holds, and since ${\mathscr B}_q$ is investigated quite well 
in \cite{fourier}, we have the following properties (see Example 4.1,
Proposition 4.2, Theorem 5.1 and Theorem 5.4 in \cite{fourier}).
\par
\medskip
   (3-2-1) If $F(t)=t^{n+1}$ ($n \in \BN$) we have 
${\mathscr B}_q^{\lambda}[F]= \xi^n/[n]_q!$.
\par
   (3-2-2) If $F(t,z)$ satisfies
\[
    |F(t,z)| \leq \frac{H}{\epsilon} |t|^{\alpha}
     \quad \mbox{on $(D_r^* \setminus 
             {\mathscr Z}_{\lambda, \epsilon}) \times D_R$},
       \quad \forall \epsilon >0
\]
for some $H>0$ and $\alpha>0$, the function
$f(\xi,z)={\mathscr B}_q^{\lambda}[F](\xi,z)$ is well-defined as a 
function on $\lambda q^{\BZ} \times D_R$ and it is holomorphic in 
$z \in D_R$.
\par
   (3-2-3) The following gives inversion formulas.

\begin{thm}[Inversion formulas]\label{Theorem3.2}
   {\rm (1)} If $f(\xi,z)$ satisfies the assumption in {\rm (3-1-2)}, 
we have
\[
     f(\xi,z)
     = ({\mathscr B}_q^{\lambda} \circ
        {\mathscr L}_q^{\lambda})[f](\xi,z)
    \quad \mbox{on $\lambda q^{\BZ} \times D_R$}.
\]
\par
   {\rm (2)} If $F(t,z)$ satisfies the assumption in {\rm (3-2-2)}, 
we have
\[
     F(t,z)
     = ({\mathscr L}_q^{\lambda} \circ
                 {\mathscr B}_q^{\lambda})[F](t,z)
    \quad \mbox{on 
    $(D^*_{r_1} \setminus {\mathscr Z}_{\lambda}) \times D_R$}.
\]
for some $r_1>0$.
\end{thm}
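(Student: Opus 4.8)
The plan is to realize both formulas as reproducing-kernel identities; the only algebraic input is the Euler product form of the two $q$-exponentials recalled in \S 3, namely $\mathrm{exp}_q(x)=\prod_{m\ge 0}(1+q^{-m-1}(q-1)x)$ and $\mathrm{Exp}_q(x)=\prod_{m\ge 0}(1-q^{-m-1}(q-1)x)^{-1}$, whence $\mathrm{Exp}_q(-x)\mathrm{exp}_q(x)=1$ and, cancelling overlapping factors, for $k\in\BZ$,
\[
\mathrm{Exp}_q(-qu)\,\mathrm{exp}_q(q^{k}u)=
\begin{cases}
\displaystyle \prod_{j=1}^{k-1}\bigl(1+q^{j}(q-1)u\bigr), & k\ge 1,\\[3mm]
\bigl(1+(q-1)u\bigr)^{-1}, & k=0,\\[3mm]
\displaystyle \prod_{j=k}^{0}\bigl(1+q^{j}(q-1)u\bigr)^{-1}, & k\le -1.
\end{cases}
\]
A useful consequence: for $\xi=\lambda q^{j}$ the zeros of $\tau\mapsto\mathrm{exp}_q(\xi/\tau)$ at the points of $\mathscr Z_\lambda$ of modulus $<(q-1)|\xi|$ cancel the simple poles of $F$ there, so $F(\tau,z)\mathrm{exp}_q(\xi/\tau)\tau^{-2}$ is holomorphic on $0<|\tau|<(q-1)|\xi|$, which also makes $\mathscr B_q^\lambda[F](\xi,z)$ independent of the radius $\rho_\xi\in(0,(q-1)|\xi|)$. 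I shall also use that $\mathrm{Exp}_q(\zeta)$ tends to $0$ faster than any power of $1/|\zeta|$ as $|\zeta|\to\infty$ along rays avoiding its poles $\{q^{m+1}/(q-1):m\ge0\}$.

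\emph{Part (1).} Inserting the definitions and writing $\eta=\lambda q^{m}$,
\[
\bigl(\mathscr B_q^\lambda\mathscr L_q^\lambda[f]\bigr)(\xi,z)
=\frac{1}{2\pi i}\int_{|t|=\rho_\xi}\Bigl(\sum_{\eta\in\lambda q^{\BZ}}\mathrm{Exp}_q(-q\eta/t)\,f(\eta,z)\,(q-1)\eta\Bigr)\mathrm{exp}_q(\xi/t)\,\frac{dt}{t^{2}}.
\]
I would take $\rho_\xi$ in a gap of the moduli of $\mathscr Z_\lambda$ with $\rho_\xi<(q-1)|\xi|$ and small enough that, by the growth bounds (\ref{3.2}) on $f$ (and $|f(\lambda q^{-m},z)|\le AB^{m}$, $B<q$, which is the case $N=0$ of (\ref{3.3})) together with the decay of $\mathrm{Exp}_q$, the series converges uniformly on $|t|=\rho_\xi$; then the sum and integral exchange. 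It remains to evaluate $\kappa(\xi,\eta):=\frac{1}{2\pi i}\int_{|t|=\rho_\xi}\mathrm{Exp}_q(-q\eta/t)\mathrm{exp}_q(\xi/t)\,t^{-2}\,dt$ for $\xi,\eta\in\lambda q^{\BZ}$. Writing $\xi=q^{k}\eta$ and using the identity above with $u=\eta/t$: for $k\ne 0$ the integrand is, apart from $t^{-2}$, either a polynomial in $1/t$ (hence no $t^{-1}$ term) or a rational function holomorphic on $|t|<(q-1)|\xi|$, so $\kappa(\xi,\eta)=0$; for $k=0$ the integrand is $\bigl(t(t+(q-1)\xi)\bigr)^{-1}$, whose only singularity in $|t|<\rho_\xi$ is the simple pole at $t=0$, so $\kappa(\xi,\xi)=\frac{1}{(q-1)\xi}$. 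Hence $\bigl(\mathscr B_q^\lambda\mathscr L_q^\lambda[f]\bigr)(\xi,z)=\sum_{\eta}f(\eta,z)(q-1)\eta\,\kappa(\xi,\eta)=f(\xi,z)$ on $\lambda q^{\BZ}\times D_R$.

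\emph{Part (2).} Dually, with $\tau$ the Borel variable, exchanging the $q$-Laplace sum with the Borel contour integral — an exchange needing care because the admissible radii $\rho_m$ for $\mathscr B_q^\lambda[F](\lambda q^m,\cdot)$ shrink as $m\to-\infty$, and justified using $|F(\tau,z)|\le(H/\epsilon)|\tau|^{\alpha}$ from (3-2-2) and the decay of $\mathrm{Exp}_q$ — one is led to
\[
\bigl(\mathscr L_q^\lambda\mathscr B_q^\lambda[F]\bigr)(t,z)=\frac{1}{2\pi i}\int F(\tau,z)\,K(t,\tau)\,\frac{d\tau}{\tau^{2}},\qquad
K(t,\tau)=\sum_{\eta\in\lambda q^{\BZ}}\mathrm{Exp}_q(-q\eta/t)\,\mathrm{exp}_q(\eta/\tau)\,(q-1)\eta .
\]
Here $K(t,\tau)=\bigl(\mathscr L_q^\lambda[\mathrm{exp}_q(\cdot/\tau)]\bigr)(t)$; expanding $\mathrm{exp}_q(\eta/\tau)=\sum_{n\ge 0}\eta^{n}/([n]_q!\,\tau^{n})$ and applying (3-1-1) term by term gives $K(t,\tau)=\sum_{n\ge 0}t^{n+1}/\tau^{n}=t\tau/(\tau-t)$ for $|t|<|\tau|$, so the right-hand side becomes a Cauchy-type integral $\frac{1}{2\pi i}\int \frac{t\,F(\tau,z)}{\tau(\tau-t)}\,d\tau$ whose pole at $\tau=t$ reproduces $F(t,z)$; the contribution of $\tau=0$ vanishes since $F(\tau,z)=O(|\tau|^{\alpha})$, and the residues at the simple poles of $F$ on $\mathscr Z_\lambda$ are shown to contribute nothing by exploiting, term by term before the exchange, the cancellation of those poles by the zeros of $\mathrm{exp}_q(\eta/\tau)$. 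This gives $\bigl(\mathscr L_q^\lambda\mathscr B_q^\lambda[F]\bigr)(t,z)=F(t,z)$ on $(D_{r_1}^{*}\setminus\mathscr Z_\lambda)\times D_R$ for a suitable $r_1>0$.

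The algebraic core — the telescoping of the Euler products and the residue evaluations — is routine; the real work, and what I expect to be the main obstacle, is the analytic bookkeeping. Namely: (i) rigorously justifying the interchange of the infinite discrete sum with the contour integral, uniformly on the chosen circle, which requires the sharp growth hypotheses of (\ref{3.2})/(3-2-2) together with the super-rapid decay of $\mathrm{Exp}_q$ away from its poles; and (ii) in part (2), handling the fact that the poles of $F$ on $\mathscr Z_\lambda$ accumulate at the origin — choosing the contour radii $\rho_m,\rho$ relative to $\mathscr Z_\lambda$ so that the cancellation by the zeros of $\mathrm{exp}_q$ applies, splitting the Laplace sum accordingly, and controlling the resulting limits via the $|\tau|^{\alpha}$ estimate (as in \cite{fourier}). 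The rest is unwinding the definitions of \S 3.
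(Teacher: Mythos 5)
First, note that the paper does not actually prove Theorem \ref{Theorem3.2}: it is imported verbatim from Theorems 5.1 and 5.4 of \cite{fourier}, so there is no internal proof to compare yours against. Judged on its own terms, your part (1) is essentially complete and correct: the telescoping of the Euler products giving $\mathrm{Exp}_q(-qu)\,\mathrm{exp}_q(q^ku)$, the resulting evaluation $\kappa(\xi,\eta)=\delta_{\xi\eta}/((q-1)\xi)$ (no $t^{-1}$ term for $k\ge 1$; only the pole at $t=0$ inside $|t|=\rho_\xi<(q-1)|\xi|$ for $k=0$; holomorphy inside the contour for $k\le -1$), and the uniform convergence on the circle needed for the interchange all check out under the hypotheses of (3-1-2).

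Part (2), however, has a genuine gap at exactly the step you flag as ``the real work,'' and the mechanism you propose there is not correct as stated. You claim the residues of $F$ at the points of ${\mathscr Z}_{\lambda}$ inside the common contour ``contribute nothing'' because of the zeros of $\mathrm{exp}_q(\eta/\tau)$. But those zeros cancel only the poles of $F$ of modulus $<(q-1)|\eta|$, and after summing over $\eta$ the kernel becomes $K(t,\tau)=t\tau/(\tau-t)$, which does \emph{not} vanish on ${\mathscr Z}_{\lambda}$; hence the single integral $\frac{1}{2\pi i}\oint_{|\tau|=\rho}F(\tau,z)K(t,\tau)\tau^{-2}\,d\tau$ genuinely acquires a nonzero residue $\mathrm{Res}_{\zeta}(F)\cdot t/(\zeta(\zeta-t))$ at each $\zeta\in{\mathscr Z}_{\lambda}$ with $|\zeta|<\rho$. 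What actually saves the argument is different: deforming each individual contour $|\tau|=\rho_\eta$ (with $\rho_\eta<(q-1)|\eta|$) out to the common radius $\rho$ crosses, for each such $\zeta$, exactly those poles of $F(\tau,z)\mathrm{exp}_q(\eta/\tau)$ for which $\eta$ is small enough that $\zeta$ is not a zero of $\mathrm{exp}_q(\eta/\cdot)$; summing these correction residues over $\eta$ reproduces the partial kernel sum at $\tau=\zeta$, which equals $K(t,\zeta)$ precisely because the omitted terms vanish there, and this cancels the residue coming from the Cauchy integral. That cancellation does close the argument, but it is a computation you have not performed (and it additionally requires $|t|$ small relative to the poles in play so that the kernel series converges to $t\tau/(\tau-t)$ at those points), not an automatic consequence of the pole/zero cancellation you invoke. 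As written, part (2) is an accurate map of where the difficulty lies, but it does not yet contain the argument that resolves it.
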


    By (3-2-1), it will be reasonable to define the formal 
$q$-Borel transform $\hat{\mathscr B}_q$ in the following way:
\[
    \hat{\mathscr B}_q[\hat{F}](\xi,z)=\sum_{n \geq 0}
             \frac{a_n(z)}{[n]_q!}\xi^n \quad 
   \mbox{for} \enskip
     \hat{F}(t,z)=\sum_{n \geq 0}a_n(z)t^{n+1}.
\]
Since $D_q[t^n]=[n]_q t^{n-1}$ holds, this formal $q$-Borel
transform is just fitting to our equation (\ref{1.1}).

%
%
%

\subsection{$q$-Convolutions}\label{subsection 3.3}

    Let $a(\xi,z)= \sum_{k \geq 0} a_k(z) \xi^k$ be a holomorphic 
function in a neighborhood of the origin of 
$\BC_{\xi} \times \BC_z^d$. For a function $f(\xi,z)$ we define 
the $q$-convolution $(a*_qf)(\xi,z)$ of $a(\xi,z)$ and $f(\xi,z)$ 
with respect to $\xi$ by
\begin{equation}
    (a*_qf)(\xi,z)= \sum_{k \geq 0}\frac{a_k(z)}{q^k}
        \int_0^{\xi}(\xi-py)^k_p f(q^{-k-1}\xi,z) d_p\xi,
    \label{3.5}
\end{equation}
where $p=1/q$, the integral in (\ref{3.5}) is taken as $p$-Jackson 
integral, and $(\xi-py)^k_p$ is defined by the following: 
$(\xi-py)^0_p=1$ and for $k \geq 1$
\[
    (\xi-py)^k_p=(\xi-py)(\xi-p^2y) \cdots (\xi-p^ky).
\]
\par
   We have the following properties:
\par
\medskip
   (3-3-1) By Example 6.1 in \cite{fourier} we have
\[
      \xi^m *_q \xi^n = \dfrac{[m]_q![n]_q!}{[m+n+1]_q!}
              \xi^{m+n+1} \quad \mbox{for any $m,n \in \BN$}. 
\]
\par
   (3-3-2)    For $I=(\theta_1, \theta_2) \subset \BR$ and 
$0<r \leq \infty$, we write
\begin{align*}
    &S_I = \{\xi \in {\mathcal R}(\BC \setminus \{0\}) \,;\,
       0<|\xi|<\infty, \theta_1< \arg \xi < \theta_2 \}, \\
    &S_I(r)= \{\xi \in {\mathcal R}(\BC \setminus \{0\}) \,;\,
       0<|\xi|<r, \theta_1< \arg \xi < \theta_2 \}.
\end{align*}
Then, by the definition of $q$-convolution we have

\begin{lem}\label{Lemma3.3}
    {\rm (1)} Let $a(\xi,z)$ be a holomorphic 
function on $D_r \times D_R$, and let $f(\xi,z)$ be a 
holomorphic function on $S_I(r) \times D_R$ satisfying the 
following: there is an 
$0<\alpha<1$ such that $\xi^{\alpha}f(\xi,z)$ is bounded on 
$S_I \times D_R$. Then, $(a *_q f)(\xi,z)$ is well-defined as a 
holomorphic function on $S_I(qr) \times D_R$.
\par
   {\rm (2)} In addition, if a holomorphic function 
$A(\xi)=\sum_{k \geq 0}A_k\xi^k$ on $D_r$ and a continuous 
function $F(x)$ on $x \geq 0$ satisfy
\begin{align*}
   &1) \enskip a(\xi,z) \ll A(\xi) \quad 
             \mbox{in $\BC[[\xi]]$ for any $z \in D_R$}, \\
   &2) \enskip |f(\xi,z)| \leq F(|\xi|) 
                     \quad \mbox{on $S_I(r) \times D_R$}
\end{align*}
{\rm (}where $\sum_{k \geq 0}a_k\xi^k \ll \sum_{k \geq 0}b_k\xi^k$ 
in $\BC[[\xi]]$ means 
that $|a_k| \leq b_k$ holds for all $k \geq 0${\rm )}, we have
\[
    |(a *_q f)(\xi,z)| \leq (A *_q F)(|\xi|) \quad
        \mbox{on $S_I(qr) \times D_R$}.
\]
\end{lem}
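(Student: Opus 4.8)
The plan is to obtain both parts by estimating the general term of the defining series (\ref{3.5}). Put $p=1/q$ and, for $k\in\BN$, let
\[
   T_k(\xi,z)=\frac{a_k(z)}{q^k}\int_0^{\xi}(\xi-py)^k_p\,f(q^{-k-1}y,z)\,d_p y,
\]
so that $(a *_q f)=\sum_{k\ge0}T_k$. Since $\int_0^{\xi}g(y)\,d_p y=(1-p)\xi\sum_{n\ge0}p^{n}g(p^{n}\xi)$ and $(\xi-p\cdot p^{n}\xi)^k_p=\xi^{k}\prod_{j=1}^{k}(1-p^{n+j})$, one can rewrite
\[
   T_k(\xi,z)=(1-p)\,\frac{a_k(z)}{q^k}\,\xi^{k+1}\sum_{n\ge0}p^{n}\Bigl(\prod_{j=1}^{k}(1-p^{n+j})\Bigr)f\bigl(p^{n+k+1}\xi,z\bigr).
\]
The first thing to notice is that the scaling by $q^{-k-1}$ is exactly what keeps every argument of $f$ inside the given region: if $\xi\in S_I(qr)$ and $n,k\ge0$, then $|p^{n+k+1}\xi|<p^{n+k+1}qr\le r$, so the hypothesis on $f$ applies to all these values and $\xi\mapsto f(p^{n+k+1}\xi,z)$ is holomorphic. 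Hence each $T_k$ is holomorphic on $S_I(qr)\times D_R$, and it remains only to prove that $\sum_k T_k$ converges locally uniformly there.

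For part (1), fix $0<\rho<r$. Cauchy's inequalities for $a$ on $D_r\times D_R$ give $|a_k(z)|\le C\rho^{-k}$ on $D_R$, and the assumption that $\xi^{\alpha}f$ is bounded gives $|f(\xi,z)|\le M|\xi|^{-\alpha}$. Bounding the factors $\prod_{j=1}^{k}(1-p^{n+j})$ by $1$, the inner $n$-series in $T_k$ is dominated by a constant times $|\xi|^{-\alpha}\sum_{n\ge0}p^{n(1-\alpha)}$, which converges \emph{precisely because} $\alpha<1$; this is the sole place the hypothesis $0<\alpha<1$ enters. Gathering the powers of $p=1/q$, the resulting bound on $|T_k(\xi,z)|$ is of order $|\xi|^{1-\alpha}$ times a factor geometrically small in $k$ on each compact subset of $S_I(qr)\times D_R$, so $\sum_k T_k$ converges there locally uniformly (letting $\rho\uparrow r$) and defines a holomorphic function; holomorphy in $z$ is immediate from that of the $a_k(z)$. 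This proves (1).

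For part (2), the key point is that the $q$-convolution is positive by construction: in the displayed $n$-series the weights $p^{n}\prod_{j=1}^{k}(1-p^{n+j})$ are all nonnegative. Hence taking absolute values in $(a *_q f)=\sum_k T_k$ and inserting $|a_k(z)|\le A_k$, $|f(\xi,z)|\le F(|\xi|)$ and $|\xi^{k+1}|=|\xi|^{k+1}$ turns each term into the corresponding term of the very same expansion applied to $A$ and $F$ at the real point $|\xi|$ — that is, into the series defining $(A *_q F)(|\xi|)$. This majorant series converges for $|\xi|<qr$ (here $F$ need only be bounded on compacts, so the $n$-series is already dominated by $(\sup F)\sum_{n\ge0}p^{n}<\infty$ and the $k$-series is geometric), so $(A *_q F)(|\xi|)$ is finite and $|(a *_q f)(\xi,z)|\le(A *_q F)(|\xi|)$ on $S_I(qr)\times D_R$, which is (2).

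I expect the main obstacle to be the convergence estimate in part (1): carrying the Cauchy bound on $a_k$ and the $|\xi|^{-\alpha}$ bound on $f$ through the $p$-Jackson integral and verifying that the resulting series in $k$ converges on $S_I(qr)$, with the condition $\alpha<1$ doing the essential work of making the integral near $\xi=0$ finite. Everything after that — holomorphy, and part (2) — is bookkeeping, precisely because the $q$-convolution (\ref{3.5}) is built so that the Jackson nodes carry nonnegative weights, which is what makes the comparison with $(A *_q F)$ exact.
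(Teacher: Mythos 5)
Your reduction of (\ref{3.5}) to the Jackson sum
\[
T_k(\xi,z)=(1-p)\,\frac{a_k(z)}{q^k}\,\xi^{k+1}\sum_{n\ge 0}p^{n}\Bigl(\prod_{j=1}^{k}(1-p^{n+j})\Bigr)f\bigl(p^{n+k+1}\xi,z\bigr),\qquad p=1/q,
\]
is correct (it reproduces (3-3-1) via the $p$-beta integral), the observation that all arguments $p^{n+k+1}\xi$ of $f$ remain in $S_I(r)$ when $\xi\in S_I(qr)$ is exactly the right reason the domain dilates by $q$, and part (2) is indeed immediate from the nonnegativity of the weights $p^{n}\prod_{j}(1-p^{n+j})$. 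The paper offers no proof of this lemma (it is asserted to follow from the definition), so an explicit argument of this kind is the right thing to supply.

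However, part (1) has a genuine gap at the words ``geometrically small in $k$ on each compact subset of $S_I(qr)$''. Carrying your own bounds through: $|f(p^{n+k+1}\xi,z)|\le M(p^{n+k+1}|\xi|)^{-\alpha}$ contributes a factor $q^{(k+1)\alpha}$, so the $n$-sum is at most $MC_{\alpha}q^{(k+1)\alpha}|\xi|^{-\alpha}$ and, with $|a_k(z)|\le C\rho^{-k}$ from Cauchy,
\[
|T_k(\xi,z)|\ \le\ C'\,|\xi|^{1-\alpha}\Bigl(\frac{|\xi|}{q^{1-\alpha}\rho}\Bigr)^{k}.
\]
The ratio is $<1$ only for $|\xi|<q^{1-\alpha}\rho$, so letting $\rho\uparrow r$ you obtain holomorphy on $S_I(q^{1-\alpha}r)$, not on $S_I(qr)$; since $\alpha>0$, compact subsets of $S_I(qr)$ near the outer edge are not covered. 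This loss is not an artifact of crude estimation: for $a(\xi)=\sum_k(\xi/r)^k$ and $f(\xi)=\xi^{-\alpha}$ every term is positive for $\xi>0$, and keeping only the $n=0$ term of the Jackson sum gives $T_k(\xi)\ge c\,\xi^{1-\alpha}\bigl(\xi q^{\alpha-1}/r\bigr)^{k}$ with $c=(1-p)q^{\alpha}\prod_{j\ge 1}(1-p^{j})>0$, so the defining series genuinely diverges for $q^{1-\alpha}r<\xi<qr$. Hence the full claim on $S_I(qr)$ cannot be reached from ``$a$ holomorphic on $D_r$'' alone: one needs either the smaller sector $S_I(q^{1-\alpha}r)$ (which still strictly contains $S_I(r)$ and suffices for most purposes), or coefficients $a_k$ decaying faster than geometrically so that the extra $q^{k\alpha}$ is absorbed --- which is precisely the situation everywhere the paper actually uses this lemma (in Lemma \ref{Lemma4.4} and Proposition \ref{Proposition4.1} one has $a\ll A\phi_m(\xi;h_0)$, hence $|a_k(z)|\le\mathrm{const}\cdot h_0^{k}/[k]_q!$). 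You should state and prove one of these corrected versions; as written, the convergence step of part (1) fails on the outer annulus.
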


\smallskip
   (3-3-3) By Theorems 6.3 and 6.7 in \cite{fourier} we have

\begin{thm}[Convolution theorem]\label{Theorem3.4}
   {\rm (1)} Let $f(\xi,z)$ 
be a function on $\lambda q^{\BZ} \times D_R$ satisfying the 
condition in {\rm (3-1-2)}, and let $a(\xi,z)$ be a holomorphic 
function on $\BC \times D_R$ with the estimate 
\begin{equation}
    |a(\xi,z)| \leq M |\xi|^{\alpha} 
       \exp \Bigl( \dfrac{(\log |\xi|)^2}{2 \log q} \Bigr)
       \quad
      \mbox{on $(\BC_{\xi} \setminus \{0\}) \times D_R$}
              \label{3.6}
\end{equation}
for some $M>0$ and $\alpha \in \BR$.  Then, we have
\[
      {\mathscr L}_q^{\lambda}[a*_q f](t,z)
     = {\mathscr L}_q^{\lambda}[a](t,z) 
                    \times {\mathscr L}_q^{\lambda}[f](t,z)
     \quad \mbox{on $(D^*_r \setminus {\mathscr Z}_{\lambda})
                     \times D_R$}
\]
for some $r>0$.
\par
   {\rm (2)} Let $A(t,z)$ be a holomorphic function on 
$D_r \times D_R$ satisfying $A(t,z)=O(|t|)$ 
{\rm (}as $|t| \longrightarrow 0$ uniformly 
on $D_R${\rm )}, and let $F(t,z)$ be a holomorphic function on 
$(D^*_r \setminus {\mathscr Z}_{\lambda}) \times D_R$ having at 
most simple poles on the set ${\mathscr Z}_{\lambda}$ with respect 
to $t$. Suppose the condition in {\rm (3-2-2)}.  Then, we have 
\[
    {\mathscr B}_q^{\lambda}[A \times F](\xi,z)
    = ({\mathscr B}_q^{\lambda}[A] 
             *_q {\mathscr B}_q^{\lambda}[F])(\xi,z) \quad
   \mbox{on $\lambda q^{\BZ} \times D_R$}.
\]
\end{thm}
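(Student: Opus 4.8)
The plan is to reduce both identities to the case $\lambda=1$ — which is Theorems~6.3 and~6.7 of \cite{fourier} — and then pass to arbitrary $\lambda$ using the scaling relations ${\mathscr L}_q^{\lambda}=\lambda\,(M_{1/\lambda}\circ{\mathscr L}_q\circ M_{\lambda})$ and ${\mathscr B}_q^{\lambda}=\lambda^{-1}(M_{1/\lambda}\circ{\mathscr B}_q\circ M_{\lambda})$ recorded above. The one additional ingredient is the behaviour of the $q$-convolution under a dilation, namely $M_{\mu}[a*_q f]=\mu\,(M_{\mu}[a]*_q M_{\mu}[f])$, which I would read off from the defining formula (\ref{3.5}) by the change of variable $y\mapsto\mu y$ (equivalently: expand $a$ in its Taylor series, use bilinearity of $*_q$, and invoke the monomial rule $\xi^m*_q\xi^n=\tfrac{[m]_q![n]_q!}{[m+n+1]_q!}\xi^{m+n+1}$ of (3-3-1), for which the dilation identity is immediate). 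Granting this, part~(1) follows from
\[
   {\mathscr L}_q^{\lambda}[a*_q f]
   =\lambda^2\,M_{1/\lambda}{\mathscr L}_q\bigl[M_{\lambda}a *_q M_{\lambda}f\bigr]
   =\lambda^2\,M_{1/\lambda}\bigl({\mathscr L}_q[M_{\lambda}a]\cdot{\mathscr L}_q[M_{\lambda}f]\bigr)
   ={\mathscr L}_q^{\lambda}[a]\cdot{\mathscr L}_q^{\lambda}[f],
\]
where the middle equality is the $\lambda=1$ case and the last uses that $M_{1/\lambda}$ is an algebra homomorphism (together with the scaling relation again); part~(2) is the mirror computation with ${\mathscr B}_q$ in place of ${\mathscr L}_q$.

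For completeness I would also recall why the $\lambda=1$ statements hold, both being of Fubini type. In part~(1) one inserts the series/integral defining $a*_q f$ into $\int_{q^{\BZ}}\textrm{Exp}_q(-q\xi/t)\,(a*_q f)(\xi,z)\,d_q\xi$, interchanges the two $q$-summations, and collapses the inner one via the monomial identity ${\mathscr L}_q[\xi^k]=[k]_q!\,t^{k+1}$ of (3-1-1) and Euler's product formula for $\textrm{Exp}_q$; in part~(2) one feeds the contour integral (\ref{3.4}) into the $q$-convolution (\ref{3.5}) and interchanges integrations, using (3-2-1) and (3-3-1). Here I would first check that ${\mathscr B}_q[A]$ is genuine input for (\ref{3.5}): since $A(t,z)=O(|t|)$ is holomorphic on $D_r\times D_R$, the coefficients of $A=\sum_{n\ge0}A_{n+1}(z)t^{n+1}$ decay geometrically while $[n]_q!\sim q^{n^2/2}$, so ${\mathscr B}_q[A]=\sum_{n\ge0}\tfrac{A_{n+1}(z)}{[n]_q!}\xi^n$ is entire in $\xi$ and holomorphic in $z$ on a smaller polydisk, so that the convolution ${\mathscr B}_q[A]*_q{\mathscr B}_q[F]$ makes sense.

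The hard part will be the convergence bookkeeping that legitimises these interchanges, and this is exactly where hypothesis (\ref{3.6}) is used in part~(1): because the $q$-Laplace integral runs over the unbounded set $\lambda q^{\BZ}$, the Taylor series of $a$ cannot be substituted globally, and the bound $|a(\xi,z)|\le M|\xi|^{\alpha}\exp((\log|\xi|)^2/(2\log q))$ — note $\exp((\log|\xi|)^2/(2\log q))=q^{(\log_q|\xi|)^2/2}$, matching the growth $[n]_q!\sim q^{n^2/2}$, that is, $q$-Gevrey of order one — is precisely what keeps ${\mathscr L}_q^{\lambda}[a]$ finite and makes the resulting double sum absolutely convergent on a common domain $(D_r^*\setminus{\mathscr Z}_{\lambda})\times D_R$, so that the interchange is justified. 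It is also worth noting that ${\mathscr L}_q^{\lambda}[a]$ is in general only an asymptotic sum of $\sum_k a_k(z)[k]_q!\,t^{k+1}$, not literally equal to it, which is why the naive ``reduce to monomials, then sum'' shortcut is not available. In part~(2) the analogous points are that $A\times F$ inherits the estimate of (3-2-2) (with the exponent improved by the factor $O(|t|)$), so that ${\mathscr B}_q^{\lambda}[A\times F]$ is defined by (\ref{3.4}), and that the contour radius in (\ref{3.4}) must be chosen — and the integral deformed — compatibly with the simple-pole locus ${\mathscr Z}_{\lambda}$ of $F$. Once these estimates are in hand, the algebraic identities (3-1-1), (3-2-1), (3-3-1) make the Fubini computations go through.
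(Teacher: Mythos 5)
Your proposal is correct and follows essentially the same route as the paper, which gives no independent proof here: it simply invokes Theorems~6.3 and~6.7 of \cite{fourier} for the case $\lambda=1$ together with the scaling relations ${\mathscr L}_q^{\lambda}=\lambda(M_{1/\lambda}\circ{\mathscr L}_q\circ M_{\lambda})$ and ${\mathscr B}_q^{\lambda}=(M_{1/\lambda}\circ{\mathscr B}_q\circ M_{\lambda})/\lambda$, exactly the reduction you carry out (your explicit dilation identity $M_{\mu}[a*_qf]=\mu\,(M_{\mu}[a]*_qM_{\mu}[f])$ and your sketch of the $\lambda=1$ Fubini arguments are correct supplements that the paper delegates entirely to the reference).
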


   As to the estimate of type (\ref{3.6}), we have the following 
result (see Proposition 2.1 in \cite{ramis}):

\begin{prop}\label{Proposition3.5}
    Let $\hat{f}(\xi)= \sum_{n \geq 0}a_n\xi^n
\in \BC[[\xi]]$. The following two conditions are equivalent:
\par
   {\rm (1)} There are $A>0$ and $H>0$ such that
\[
         |a_n| \leq \dfrac{A H^n}{[n]_q!}, 
         \quad n=0,1,2,\ldots. 
\]
\par
   {\rm (2)} $\hat{f}(\xi)$ is the Taylor expansion at $\xi=0$ of
an entire function $f(\xi)$ satisfying the estimate
\[
     |f(\xi)| \leq M |\xi|^{\alpha}
        \exp \Bigl( \dfrac{(\log |\xi|)^2}{2 \log q} \Bigr)
     \quad \mbox{on $\BC_{\xi} \setminus \{0\}$} 
\]
for some $M>0$ and $\alpha \in \BR$.
\end{prop}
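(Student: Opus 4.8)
The plan is to prove the two implications separately, in each case isolating the super-exponential factor $q^{n(n+1)/2}$ that is hidden inside $[n]_q!$ and matching it against the Gaussian-in-logarithm growth $\exp((\log|\xi|)^2/(2\log q))$. The bridge between the coefficient side and the function side is the elementary factorization
\[
   [n]_q! = \frac{q^{n(n+1)/2}}{(q-1)^n}\prod_{k=1}^n(1-q^{-k}),
\]
obtained by writing $q^k-1=q^k(1-q^{-k})$ in each factor of $[n]_q!$. Since $q>1$, the finite product $\prod_{k=1}^n(1-q^{-k})$ increases to the convergent, strictly positive infinite product $\prod_{k\geq 1}(1-q^{-k})$, so it stays between two fixed positive constants for all $n$. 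Hence $[n]_q!$ is comparable to $q^{n(n+1)/2}/(q-1)^n$ up to multiplicative constants independent of $n$, and the whole proof reduces to tracking this one super-exponential factor.

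For the implication $(1)\Rightarrow(2)$ I would start from the termwise majorization
\[
   |f(\xi)| \leq A\sum_{n\geq 0}\frac{(H|\xi|)^n}{[n]_q!}
          = A\,\textrm{exp}_q(H|\xi|),
\]
which in particular shows that $\hat f$ has infinite radius of convergence and hence defines an entire function. To estimate $\textrm{exp}_q$ I would use its Euler product $\textrm{exp}_q(x)=\prod_{m\geq 0}(1+q^{-m-1}(q-1)x)$ from \S 3.2. Taking logarithms and setting $y=(q-1)x$, I would split the sum $\sum_{m\geq 0}\log(1+q^{-m-1}y)$ at the threshold index $m_0\approx\log_q y$ where $q^{-m-1}y$ crosses $1$: for $m<m_0$ each term behaves like $\log y-(m+1)\log q$ and these sum to $(\log y)^2/(2\log q)+O(\log y)$, while the tail over $m\geq m_0$ is dominated by a convergent geometric series and contributes only $O(1)$. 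This yields $\textrm{exp}_q(x)\leq M x^{\alpha}\exp((\log x)^2/(2\log q))$ with leading exponent $\alpha=\log(q-1)/\log q$ (the $O(\log y)$ error being absorbed into $\alpha$); substituting $x=H|\xi|$ merely shifts $\log|\xi|$ by the constant $\log H$, which is absorbed into the power and the constant, giving exactly the estimate in (2).

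For the converse $(2)\Rightarrow(1)$ I would apply the Cauchy estimate on the circle $|\xi|=r$, which is legitimate for every $r>0$ because $f$ is entire:
\[
   |a_n| \leq r^{-n}\max_{|\xi|=r}|f(\xi)|
        \leq M\,r^{\alpha-n}\exp\Bigl(\frac{(\log r)^2}{2\log q}\Bigr).
\]
Writing $u=\log r$, the exponent $(\alpha-n)u+u^2/(2\log q)$ is a quadratic opening upward, minimized at $u=(n-\alpha)\log q$, i.e. at the optimal radius $r=q^{\,n-\alpha}$, with minimal value $-(n-\alpha)^2(\log q)/2$. Thus $|a_n|\leq M\,q^{-(n-\alpha)^2/2}$, and expanding $(n-\alpha)^2=n^2-2\alpha n+\alpha^2$ separates the decisive factor $q^{-n^2/2}$ from the harmless exponential factors $q^{\alpha n}$ and $q^{-\alpha^2/2}$. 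Comparing with $1/[n]_q!$, which by the factorization above is comparable to $(q-1)^n q^{-n/2}q^{-n^2/2}$, the two super-exponential parts $q^{-n^2/2}$ cancel exactly, leaving a purely exponential discrepancy that is absorbed by choosing $H\geq q^{\alpha+1/2}/(q-1)$ together with a suitable constant $A$.

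I expect the main obstacle to be the bookkeeping in direction $(1)\Rightarrow(2)$, specifically the log-square estimate for $\textrm{exp}_q$: one must justify carefully that splitting $\sum_{m\geq 0}\log(1+q^{-m-1}y)$ at $m_0\approx\log_q y$ produces the clean leading term $(\log y)^2/(2\log q)$ with an error only of order $\log y$, uniformly in $y$ (hence in $|\xi|$), and that the integer part implicit in $m_0$ does not disturb this. The converse direction is comparatively routine once the optimal radius $r=q^{\,n-\alpha}$ and the comparison $[n]_q!\asymp q^{n(n+1)/2}/(q-1)^n$ are in hand; the crucial and clarifying point there is the exact cancellation of the $q^{-n^2/2}$ factors, which is precisely what makes the $q$-Gevrey scale $1/[n]_q!$ match the $q$-exponential growth in (2).
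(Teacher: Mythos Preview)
Your argument is correct in both directions. Note, however, that the paper does not supply its own proof of this proposition: it is quoted verbatim from Ramis \cite{ramis}, Proposition~2.1, so there is no in-paper argument to compare against.

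Your route is the standard one and in fact dovetails with material the paper does state. For $(1)\Rightarrow(2)$, the termwise majorization $|f(\xi)|\leq A\,\textrm{exp}_q(H|\xi|)$ followed by the log-square asymptotic for $\textrm{exp}_q$ is precisely Lemma~4.3 of the paper (also cited from \cite{ramis}); your product-splitting computation is a reproof of that lemma. One small point worth making explicit: the splitting argument gives the bound only for large $|\xi|$, whereas the estimate in (2) is asserted on all of $\BC_{\xi}\setminus\{0\}$. This is harmless, since the right-hand side $M|\xi|^{\alpha}\exp((\log|\xi|)^2/(2\log q))$ tends to $+\infty$ as $|\xi|\to 0$ (the quadratic in $\log|\xi|$ dominates any linear term), while the entire function $f$ stays bounded near the origin, so enlarging $M$ covers any bounded region. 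For $(2)\Rightarrow(1)$, the Cauchy estimate at the optimal radius $r=q^{\,n-\alpha}$ together with the comparison $[n]_q!\asymp q^{n(n+1)/2}/(q-1)^n$ is exactly the right computation, and the cancellation of the $q^{-n^2/2}$ factors that you isolate is indeed the entire content of this direction.
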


%
%
%

\subsection{Some other results}\label{subsection 3.4}

   In the application to $q$-difference equations, we need some 
more results. We summarize such results here.
\par
\medskip
   (3-4-1) If 
$a(\xi,z)=\sum_{k \geq 0} a_k(z)\xi^k$ and 
$f(\xi,z)=\sum_{i \geq 0} f_i(z)\xi^i$ 
are holomorphic functions on $D_r \times D_R$, the 
$q$-convolution $(a*_qf)(\xi,z)$ is well-defined as a holomorphic
function on $D_{rq} \times D_R$, and its Taylor 
expansion is given by
\begin{equation}
      (a*_qf)(\xi,z) =\sum_{n \geq 0}\biggl(
        \sum_{k+i=n} a_k(z)f_i(z) 
      \dfrac{[k]_q![i]_q!}{[k+i+1]_q!} \biggr) \, \xi^{n+1}
     \label{3.7}
\end{equation}
(by Proposition 6.2 in \cite{fourier}).
\par
   (3-4-2) By (\ref{3.7}), it will be reasonable to define the 
formal $q$-convolution $(a \hat{*}_qf)(\xi,z)$ of two
series $a(\xi,z)=\sum_{k \geq 0} a_k(z)\xi^k$ and 
$f(\xi,z)=\sum_{i \geq 0} f_i(z)\xi^i$ in ${\mathcal O}_R [[\xi]]$
by
\[
      (a \hat{*}_qf)(\xi,z) =\sum_{n \geq 0}\biggl(
        \sum_{k+i=n} a_k(z)f_i(z) 
            \dfrac{[k]_q![i]_q!}{[k+i+1]_q!} \biggr) \, \xi^{n+1}.
\]
We have:

\begin{lem}\label{Lemma3.6}
   For two formal 
series $A(t,z)$ and $W(t,z)$ in ${\mathcal O}_R [[t]] \times t$
we have
\begin{equation}
     \hat{\mathscr B}_q[ A \times W](\xi,z)
      =\hat{\mathscr B}_q[A](\xi,z)
        \hat{*}_q \hat{\mathscr B}_q[W](\xi,z). \label{3.8}
\end{equation}
\end{lem}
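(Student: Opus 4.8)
The plan is to reduce (\ref{3.8}) to the case of monomials in $t$ and then read off both sides from the explicit defining formulas. First I would note that both sides of (\ref{3.8}) are $\BC$-bilinear in the pair $(A,W)$: the map $A\times W\mapsto\hat{\mathscr B}_q[A\times W]$ is bilinear because $\hat{\mathscr B}_q$ is $\BC$-linear and multiplication of power series is bilinear, while $\hat{\mathscr B}_q[A]\,\hat{*}_q\,\hat{\mathscr B}_q[W]$ is bilinear because $\hat{*}_q$ is bilinear by its definition. Moreover both sides are continuous for the $(t)$-adic topology on ${\mathcal O}_R[[t]]$ and the $(\xi)$-adic topology on ${\mathcal O}_R[[\xi]]$, since the coefficient of $\xi^{N}$ on either side is determined by only finitely many Taylor coefficients (in $t$) of $A$ and $W$. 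Hence it suffices to verify (\ref{3.8}) when $A(t,z)=t^{m+1}$ and $W(t,z)=t^{n+1}$ with $m,n\in\BN$, the variable $z$ playing no role; these monomials span a $(t)$-adically dense subspace of ${\mathcal O}_R[[t]]\times t$.

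For such $A$ and $W$ the left-hand side is immediate: $A\times W=t^{m+n+2}\in{\mathcal O}_R[[t]]\times t^{2}\subset{\mathcal O}_R[[t]]\times t$, so it lies in the domain of $\hat{\mathscr B}_q$, and the defining formula $\hat{\mathscr B}_q[\sum_{p\ge0}a_p(z)t^{p+1}]=\sum_{p\ge0}(a_p(z)/[p]_q!)\xi^p$, applied with the only nonzero coefficient occurring at $p=m+n+1$, gives
\[
   \hat{\mathscr B}_q[A\times W](\xi,z)=\frac{\xi^{\,m+n+1}}{[m+n+1]_q!}.
\]

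For the right-hand side I would compute $\hat{\mathscr B}_q[A](\xi,z)=\xi^{m}/[m]_q!$ and $\hat{\mathscr B}_q[W](\xi,z)=\xi^{n}/[n]_q!$, and then apply the definition of the formal $q$-convolution to $a(\xi,z)=\xi^{m}$ and $f(\xi,z)=\xi^{n}$: in the defining sum only the index pair $(k,i)=(m,n)$ contributes, so $\xi^{m}\,\hat{*}_q\,\xi^{n}=\frac{[m]_q![n]_q!}{[m+n+1]_q!}\,\xi^{m+n+1}$ (this is exactly (3-3-1)). Pulling the scalar $1/([m]_q![n]_q!)$ through $\hat{*}_q$ yields
\[
   \hat{\mathscr B}_q[A](\xi,z)\,\hat{*}_q\,\hat{\mathscr B}_q[W](\xi,z)
   =\frac{1}{[m]_q![n]_q!}\cdot\frac{[m]_q![n]_q!}{[m+n+1]_q!}\,\xi^{m+n+1}
   =\frac{\xi^{\,m+n+1}}{[m+n+1]_q!},
\]
which coincides with the left-hand side. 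Invoking bilinearity and continuity then extends the identity from monomials to all $A,W\in{\mathcal O}_R[[t]]\times t$, which proves (\ref{3.8}).

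I do not expect any serious obstacle: the entire content is the elementary cancellation $\frac{1}{[m]_q![n]_q!}\cdot\frac{[m]_q![n]_q!}{[m+n+1]_q!}=\frac{1}{[m+n+1]_q!}$, and the only point demanding care is the degree shift hard-wired into $\hat{\mathscr B}_q$ (it sends $t^{p+1}$ to $\xi^{p}/[p]_q!$). This shift is precisely what makes the bookkeeping work: a product of two ``shifted'' monomials, $t^{m+1}\cdot t^{n+1}=t^{(m+n+1)+1}$, is sent to $\xi^{m+n+1}/[m+n+1]_q!$, while on the convolution side the output monomial $\xi^{k+i+1}$ carries exactly the compensating denominator $[k+i+1]_q!$, so the two normalizations match term by term.
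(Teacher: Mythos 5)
Your argument is correct and is essentially the paper's own proof: the paper likewise reduces to the monomial case $A=a_k(z)t^{k+1}$, $W=w_i(z)t^{i+1}$ by formal bilinearity and then verifies both sides equal $a_k(z)w_i(z)\xi^{k+i+1}/[k+i+1]_q!$ via the same cancellation. Your extra remarks on $(t)$-adic continuity and the degree shift are fine but add nothing beyond the paper's one-line justification that the sums are formal.
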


\begin{proof}
    Since the summations in (\ref{3.8}) are formal, to prove
(\ref{3.8}) it is enough to show (\ref{3.8}) in the case 
$A(t,z)=a_k(z) t^{k+1}$ and $W(t,z)=w_i(z) t^{i+1}$. In this 
case, we have $(A \times W)(t,z)=a_k(z)w_i(z)t^{(k+i+1)+1}$ 
and so 
$$
    \hat{\mathscr B}_q[ A \times W](\xi,z)
       = \frac{a_k(z)w_i(z)}{[k+i+1]_q!} \xi^{k+i+1}.
$$
On the other hand, we have
\begin{align*}
   \hat{\mathscr B}_q[A](\xi,z)
        \hat{*}_q \hat{\mathscr B}_q[W](\xi,z)
   &= \Bigl( \frac{a_k(z)}{[k]_q!} \xi^k \Bigr) \, \hat{*}_q
      \Bigl( \frac{w_i(z)}{[i]_q!} \xi^i \Bigr) \\
   &= \frac{a_k(z)}{[k]_q!}\frac{w_i(z)}{[i]_q!}
     \times \frac{[k]_q! [i]_q!}{[k+i+1]_q!} \xi^{k+i+1} \\
   &= \frac{a_k(z)w_i(z)}{[k+i+1]_q!} \xi^{k+i+1}.
\end{align*}
Hence we have (\ref{3.8}).
\end{proof}

\par
\smallskip
   (3-4-3) If $A(t,z)$ ($\in {\mathcal O}_R [[t]] \times t$) is 
convergent in a neighborhood of $(0,0) \in \BC_t \times \BC_z^d$, 
we have
$\hat{\mathscr B}_q[A](\xi,z)={\mathscr B}_q^{\lambda}[A](\xi,z)$ 
for any $\lambda \in \BC \setminus \{0\}$. If 
$w(\xi,z)= \hat{\mathscr B}_q[W](\xi,z)$ is convergent in a 
neighborhood of $(0,0) \in \BC_\xi \times \BC_z^d$, the right-hand 
side of the formula (\ref{3.8}) is expressed in the form
\begin{align}
     \hat{\mathscr B}_q[A](\xi,z)
        \hat{*}_q \hat{\mathscr B}_q[W](\xi,z)
    &= {\mathscr B}_q[A](\xi,z) *_q w(\xi,z) \label{3.9}\\
    &= {\mathscr B}_q^{\lambda}[A](\xi,z) *_q w(\xi,z)
               \notag
\end{align}
for any $\lambda \in \BC \setminus \{0\}$.
\par
   (3-4-4) Let $X(t,z) \in {\mathcal O}_R [[t]]$ with 
$X(0,x) \equiv 0$ and set
$u(\xi,z)=\hat{\mathscr B}_q[X]$ $(\xi,z)$: then we have
\[
      \hat{\mathscr B}_q[(t^2D_q)^iX](\xi,z)= \xi^i u(\xi,z),
    \quad i=1,2,\ldots.
\]

%
%
%

\section{On a $q$-convolution equation}\label{section 4}

   The main part of the proof of Theorem \ref{Theorem2.3} consists 
of the analysis of a $q$-convolution equation which is obtained by 
applying the formal $q$-Borel transform to (\ref{1.1}). Hence, 
in this section we discuss only $q$-convolution
equations first.
\par
    Let $q>1$, $I=(\theta_1,\theta_2)$ be a non-empty open interval,
$0 <r \leq \infty$ and $R>0$. We set
\[
     \phi_m(x;h) 
        = \sum_{i=0}^{\infty} \frac{h^ix^{m+i}}{[m+i]_q!},
    \quad m=0,1,2,\ldots \enskip \mbox{and} \enskip
          h>0.
\]
Let us consider the 
$q$-convolution partial differential equation

\begin{align}
    P(\xi,z)u &+ \sum_{0 \leq i \leq m}
        c_{i,0}(\xi,z) *_q (\xi^i u) \label{4.1}\\
   &+ \sum_{i+\sigma |\alpha| \leq m, |\alpha|>0}
     c_{i,\alpha}(\xi,z) *_q (1 *_q (\xi^i \partial_z^{\alpha}u))
        = f(\xi,z) \notag
\end{align}
under the following assumptions:
\par
\medskip
   h${}_1$) \quad $\sigma >0$, $m_0 \in \BN$, $m \in \BN^*$,
         $0 \leq m_0<m$ and $0<R \leq 1$;
\par
   h${}_2$) \quad $P(\xi,z)$ is a polynomial in $\xi$ with 
ccoefficients in ${\mathcal O}(D_R)$, and there is a 
$\delta>0$ such that 
$|P(\xi,x)| \geq \delta |\xi|^{m_0}(1+|\xi|)^{m-m_0}$ holds 
on $S_I(r) \times D_R$.
\par
   h${}_3$) \quad 
    $c_{i,\alpha}(\xi,z) \in {\mathcal O}(\BC \times D_R)$
($i+\sigma |\alpha| \leq m$) and there are $C_{i,\alpha}>0$ 
($i+\sigma |\alpha| \leq m$) and $h_0>0$ such that
\begin{align*}
    &c_{i,\alpha}(\xi,z) \ll C_{i,\alpha}\phi_{m_0-i-1}(\xi;h_0),
          \quad \mbox{if $0 \leq i<m_0$}, \\
    &c_{i,\alpha}(\xi,z) \ll C_{i,\alpha}\phi_{0}(\xi;h_0),
          \quad \mbox{if $m_0 \leq i \leq m$}
\end{align*}
hold (as formal power series in $\xi$) for any $z \in D_R$.

\par
\medskip
   Then, we have

\begin{prop}\label{Proposition4.1}
    Suppose the conditions {\rm h${}_1$)}, {\rm h${}_2$)}
and {\rm h${}_3$)}.  Let 
$f(\xi,z) \in {\mathcal O}(S_I(r) \times D_R)$ and suppose the 
estimate
\[
             |f(\xi,z)| \leq B \phi_N(|\xi|;h)
        \quad \mbox{on $S_I(r) \times D_R$}  
\]
for some $B>0$, $h> h_0$ and some $N \in \BN^*$ satisfying 
$N \geq m_0$ and
\[
     \beta= \frac{1}{[N]_q} \sum_{0 \leq i<m_0}
           \frac{C_{i,0}}{\delta (1-h_0/h)} <1.
\]
Then, the equation {\rm (\ref{4.1})} has a unique solution 
$u(\xi,x) \in {\mathcal O}(S_I(r) \times D_R)$ which satisfies 
the following estimate: for any $0<R_1<R$ there are $M>0$
and $h_1>0$ such that
\[
    |u(\xi,z)| \leq \frac{M}{|\xi|^{m_0}(1+|\xi|)^{m-m_0}}
                 \phi_N(|\xi|;h_1) \quad 
        \mbox{on $S_I(r) \times D_{R_1}$}.
\]
\end{prop}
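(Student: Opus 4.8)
The plan is to recast the $q$-convolution equation (\ref{4.1}) as a fixed point problem and to solve it by successive approximation, measuring functions by real-variable majorants in $x=|\xi|$ (in the spirit of Lemma \ref{Lemma3.3}(2)). Since h${}_2$) gives $|P(\xi,z)|\ge\delta|\xi|^{m_0}(1+|\xi|)^{m-m_0}>0$ on $S_I(r)\times D_R$, the reciprocal $1/P$ is holomorphic there and (\ref{4.1}) is equivalent to $u=\mathcal{T}u$ with
\[
  \mathcal{T}u=\frac1P\Bigl(f-\sum_{0\le i\le m}c_{i,0}*_q(\xi^iu)-\sum_{\substack{i+\sigma|\alpha|\le m\\|\alpha|>0}}c_{i,\alpha}*_q\bigl(1*_q(\xi^i\partial_z^\alpha u)\bigr)\Bigr).
\]
Starting from $u_0=0$ and setting $u_{\nu+1}=\mathcal{T}u_\nu$, every $u_\nu$ is holomorphic on $S_I(r)\times D_R$, so the real task is to produce a uniform bound of the announced shape $|u_\nu(\xi,z)|\le M\phi_N(|\xi|;h_1)/(|\xi|^{m_0}(1+|\xi|)^{m-m_0})$ together with geometric decay of $u_{\nu+1}-u_\nu$; the $z$-differentiations will be handled by a standard Nagumo-type majorant device, so that the Cauchy loss $(R'-R'')^{-|\alpha|}$ is paid for by the gain which $q$-convolution supplies.

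The computational core rests on three elementary facts about the profiles $\phi_m$. First, from the convolution rule $x^a*_qx^b=\frac{[a]_q![b]_q!}{[a+b+1]_q!}x^{a+b+1}$ (see (3-3-1)) one obtains, for $h_1>h_0$,
\[
  \phi_a(\,\cdot\,;h_0)*_q\phi_b(\,\cdot\,;h_1)\ll\frac{1}{1-h_0/h_1}\,\phi_{a+b+1}(\,\cdot\,;h_1),\qquad
  \phi_{k+1}(x;h_1)\le\frac{1}{h_1}\,\phi_k(x;h_1),
\]
which is where the hypothesis $h>h_0$ and the denominator $1-h_0/h$ in $\beta$ enter, and which shows that each $q$-convolution contributes a factor at most $1/h_1$. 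Second, multiplication by $\xi^i$ interacts with the weight as follows: for $m_0\le i\le m$ one has $x^{i-m_0}/(1+x)^{m-m_0}\le1$, so $|\xi^iu|\le M\phi_N(x;h_1)$ directly; for $0\le i<m_0$ one uses $(1+x)^{-(m-m_0)}\le1$ together with the arithmetic inequality $[N-m_0+i+j]_q!/[N+j]_q!\le 1/[N]_q$, valid for all $j\ge0$ precisely because $N\ge m_0$ — this is the source of the factor $1/[N]_q$ in $\beta$. Third, the inner $1*_q$ occurring in the $|\alpha|>0$ terms contributes one further power of $x$ divided by an extra factor $[\,\cdot\,]_q$; this is the gain that compensates the Cauchy loss attached to $\partial_z^\alpha$.

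Combining these, after dividing by $P$ (which costs $1/\delta$ and strips off the weight $|\xi|^{m_0}(1+|\xi|)^{m-m_0}$): the resonant terms $c_{i,0}*_q(\xi^iu)$ with $0\le i<m_0$ reproduce the weighted $\phi_N$-profile with total constant $\le\frac{1}{[N]_q(1-h_0/h_1)}\sum_{0\le i<m_0}C_{i,0}/\delta$, which is $\le\beta<1$ for every $h_1\ge h$; each of the remaining terms — those with $m_0\le i\le m$, and all those with $|\alpha|>0$ — reproduces the profile with a constant carrying at least one factor $1/h_1$ (and, when $|\alpha|>0$, the fixed Cauchy factor $(R'-R'')^{-|\alpha|}$ coming from a bounded number of $z$-derivatives). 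Hence, having fixed $R_1<R$ and then chosen $h_1\ge h$ large enough, the Lipschitz constant of $\mathcal{T}$ on the ball of radius $M$ is $\le\beta+\eta$ with $\eta$ as small as we wish, in particular $<1$; choosing $M$ large enough (namely $M\ge(B/\delta)/(1-\beta-\eta)$, the term $B/\delta$ coming from $f/P$) makes that ball $\mathcal{T}$-invariant. The contraction mapping principle then yields a solution $u$, holomorphic on $S_I(r)\times D_{R_1}$, obeying the stated estimate; uniqueness follows because the difference of two solutions of (\ref{4.1}) satisfying the estimate solves the homogeneous equation, hence is a fixed point of the contraction and therefore vanishes.

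The part I expect to require genuine work is carrying out this bookkeeping rigorously: keeping the $(1+|\xi|)$-weights throughout so that the $m_0\le i\le m$ terms really gain a small factor rather than merely rescale $h_1$; verifying the inequality $[N-m_0+i+j]_q!/[N+j]_q!\le1/[N]_q$, which both forces $N\ge m_0$ and produces the precise constant $\beta$; and organizing the Nagumo argument in $z$ so that the $q$-convolution gain dominates the Cauchy loss uniformly along the whole iteration, so that the overall Lipschitz constant is honestly $\beta$ plus a controllable remainder.
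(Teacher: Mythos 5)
Your plan assembles the right ingredients --- the convolution inequality $\phi_a(\cdot;h_0)*_q\phi_b(\cdot;h_1)\ll\frac{1}{1-h_0/h_1}\phi_{a+b+1}(\cdot;h_1)$, the role of $N\ge m_0$ and of the factor $1/[N]_q$ in producing $\beta$, division by $P$ at cost $1/\delta$, and Nagumo's lemma for the $z$-derivatives --- but the packaging as a single contraction with Lipschitz constant $\beta+\eta<1$ on a fixed ball in ${\mathcal O}(S_I(r)\times D_{R_1})$ breaks down at the terms with $|\alpha|>0$. The map $u\mapsto c_{i,\alpha}*_q(1*_q(\xi^i\partial_z^\alpha u))$ is not a bounded operator on any space of functions holomorphic on a \emph{fixed} polydisc $D_{R_1}$, no matter how large you take $h_1$: a Cauchy/Nagumo estimate for $\partial_z^\alpha$ requires shrinking the domain, and when you iterate the Picard scheme $u_{\nu+1}={\mathcal T}u_\nu$ you must shrink at every step, so the accumulated loss after $\nu$ steps is of order $\prod_{k\le\nu}(\rho_{k-1}-\rho_k)^{-L}$, which grows \emph{super}-exponentially in $\nu$ for any sequence $\rho_k\downarrow R_1$. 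A per-step gain of a fixed factor $\beta+\eta<1$ (or even $1/h_1$ per convolution) is only exponential and cannot beat this. The same defect infects your uniqueness argument, which also invokes the contraction.

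What actually saves the day --- and what the paper does --- is that each application of a non-resonant term raises the index of $\phi$ by one, and $\phi_n$ carries the super-exponential factor $1/[n]_q!\sim q^{-n^2/2}$. Concretely, the paper runs a \emph{two-level} iteration: the resonant terms $c_{i,0}*_q(\xi^iu)$ with $0\le i<m_0$ (which involve no $z$-derivatives and do not raise the $\phi$-index) are fully resummed first, inside the operator ${\mathscr H}[w]=Pw+\sum_{0\le i<m_0}c_{i,0}*_q(\xi^iw)$, by a geometric series with ratio exactly $\beta$ (Lemma \ref{Lemma4.5}); then the outer iteration $\sum_{n\ge N}u_n$ applies all remaining terms once per step, yielding $\|u_n\|_\rho\le MH^n(R-\rho)^{-L(n-1)}\phi_n(|\xi|;h)\cdot|\xi|^{-m_0}(1+|\xi|)^{-(m-m_0)}$, and the sum over $n$ is controlled by the resummation inequality (\ref{4.3}) applied with $B=H/(R-\rho)^L$ and the parameter of $\phi$ inflated to $2H/(R-\rho)^L$ --- this is where the growth of the index of $\phi_n$ absorbs the Nagumo loss $(R-\rho)^{-Ln}$. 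Your proposal needs to be reorganized along these lines (one non-resonant application per outer step, with the index gain tracked explicitly) before the ``$q$-convolution gain dominates the Cauchy loss uniformly along the whole iteration,'' which you correctly flag as the crux but do not supply.
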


   The rest part of this section is used to prove this result.
In subsections 4.1 and 4.2, we present some preparatory discussions
which are needed in the proof of Proposition 4.1, and in 
subsection 4.3 we give a proof of Proposition 4.1.

%
%
%

\subsection{On the functions $\phi_m(x;h)$}\label{subsection 4.1}

    In this subsection, let us show some properties of
the functions $\phi_m(x;h)$. We note that 
$\phi_0(x;h)= \exp_q(hx)$ and for $m \geq 1$
\[
    \phi_m(x;h)= \frac{x^{m-1}}{[m-1]_q!} *_q \exp_q(hx)
     = \frac{x^{m-1}}{[m-1]_q!} *_q 
        \sum_{i=0}^{\infty} \frac{h^ix^{i}}{[i]_q!}.
\]

\begin{lem}\label{Lemma4.2}
     Let $1 \leq k \leq n$,
$0<B<h$ and $0<h_0<h$. We have the following results
for $x>0$.
\begin{align}
    &\frac{1}{x^k} \phi_n(x;h) \leq
        \frac{[n-k]_q!}{[n]_q!} 
                               \phi_{n-k}(x;h); \label{4.2}\\
    &\sum_{m=N}^{\infty} B^m \phi_m(x;h)
         \leq \frac{B^N}{1-B/h} \phi_N(x;h); \label{4.3}\\
    &\phi_m(x;h_0) *_q \phi_n(x;h) 
        \leq \frac{1}{1-h_0/h} \phi_{m+n+1}(x;h).\label{4.4}
\end{align}
\end{lem}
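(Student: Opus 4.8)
The plan is to prove the three inequalities of Lemma \ref{Lemma4.2} by direct manipulation of the defining power series of $\phi_m(x;h)=\sum_{i \geq 0}h^ix^{m+i}/[m+i]_q!$, working coefficient-by-coefficient wherever possible, since all quantities involved have nonnegative coefficients in $x$ and $h$.

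For \eqref{4.2}, I would simply reindex: $\frac{1}{x^k}\phi_n(x;h)=\sum_{i \geq 0}h^i x^{n-k+i}/[n+i]_q!$, and compare this term-by-term with $\frac{[n-k]_q!}{[n]_q!}\phi_{n-k}(x;h)=\sum_{i \geq 0}\frac{[n-k]_q!}{[n]_q!}h^ix^{n-k+i}/[n-k+i]_q!$. The desired inequality then reduces, for each $i \geq 0$, to $\frac{1}{[n+i]_q!} \leq \frac{[n-k]_q!}{[n]_q!}\frac{1}{[n-k+i]_q!}$, i.e. $\frac{[n]_q!}{[n-k]_q!} \leq \frac{[n+i]_q!}{[n-k+i]_q!}$; both sides are products of $k$ consecutive $q$-integers, and the right side uses larger ones, so the inequality is immediate from monotonicity of $n \mapsto [n]_q$. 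For \eqref{4.3}, I would interchange the (nonnegative) sums: $\sum_{m \geq N}B^m\phi_m(x;h)=\sum_{m \geq N}\sum_{i \geq 0}B^m h^i x^{m+i}/[m+i]_q!$, collect by total degree $\ell=m+i$, and bound $\sum_{m=N}^{\ell}B^m h^{\ell-m} \leq h^{\ell-N}B^N\sum_{m \geq N}(B/h)^{m-N}=\frac{B^N h^{\ell-N}}{1-B/h}$ (valid since $0<B<h$), which upon resumming gives exactly $\frac{B^N}{1-B/h}\phi_N(x;h)$.

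For \eqref{4.4}, the cleanest route is to use the convolution identity (3-3-1), namely $x^a *_q x^b=\frac{[a]_q![b]_q!}{[a+b+1]_q!}x^{a+b+1}$, applied termwise to the double series. Writing $\phi_m(x;h_0)*_q\phi_n(x;h)=\sum_{a \geq 0}\sum_{b \geq 0}\frac{h_0^a h^b}{[m+a]_q![n+b]_q!}\,(x^{m+a}*_q x^{n+b})$ and substituting (3-3-1), each term becomes $\frac{h_0^a h^b}{[m+n+a+b+1]_q!}x^{m+n+a+b+1}$. Collecting by $c=a+b$ gives $\sum_{c \geq 0}\Big(\sum_{a=0}^{c}h_0^a h^{c-a}\Big)\frac{x^{m+n+c+1}}{[m+n+c+1]_q!}$, and since $\sum_{a=0}^{c}h_0^a h^{c-a}=h^c\sum_{a=0}^{c}(h_0/h)^a \leq \frac{h^c}{1-h_0/h}$ (valid since $0<h_0<h$), this is at most $\frac{1}{1-h_0/h}\sum_{c \geq 0}h^c x^{m+n+c+1}/[m+n+c+1]_q!=\frac{1}{1-h_0/h}\phi_{m+n+1}(x;h)$, as claimed.

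The only genuine subtlety — and the step I would be most careful about — is the justification for treating $*_q$ termwise in \eqref{4.4}: the $q$-convolution is defined in \eqref{3.5} via a Jackson integral, not a priori as the bilinear extension of (3-3-1). However, property (3-4-1) together with \eqref{3.7} tells us precisely that for convergent power series the convolution is computed coefficient-by-coefficient with the factor $[k]_q![i]_q!/[k+i+1]_q!$, which is exactly (3-3-1); since $\phi_m$ and $\phi_n$ are entire in $x$ (the $q$-factorials grow super-geometrically), the termwise computation is legitimate, and all rearrangements of the resulting nonnegative series are justified by Tonelli. Everything else is bookkeeping with $q$-integers and geometric series, so no further obstacle is expected.
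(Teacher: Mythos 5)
Your proposal is correct and follows essentially the same route as the paper: termwise comparison after reindexing for \eqref{4.2}, collecting by total degree and summing a geometric series for \eqref{4.3}, and applying the monomial convolution identity (3-3-1) termwise and collecting by $a+b$ for \eqref{4.4}. Your extra remark justifying the termwise use of $*_q$ via (3-4-1)/(3.7) is a point the paper passes over silently, and it is a welcome addition.
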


\begin{proof}
    (\ref{4.2}) is verified as follows:
\begin{align*}
   \frac{1}{x^k} \phi_n(x;h)
   &= \sum_{i \geq 0} 
            \frac{h^i x^{n-k+i}}{[n-k+i]_q!}
       \times \frac{1}{[n-k+1+i]_q \cdots [n+i]_q} \\
   &\leq \frac{1}{[n-k+1]_q \cdots [n]_q}
      \sum_{i \geq 0} 
            \frac{h^i x^{n-k+i}}{[n-k+i]_q!}
     = \frac{[n-k]_q!}{[n]_q!} 
                               \phi_{n-k}(x;h).
\end{align*}
(\ref{4.3}) is verified as follows:
\begin{align*}
   \sum_{m \geq N} B^m \phi_m(x;h)
   &= \sum_{m \geq N} B^m\sum_{i \geq 0}
                      \frac{h^i x^{m+i}}{[m+i]_q!} 
   = \sum_{k \geq N} \frac{h^k x^{k}}{[k]_q!}
                  \sum_{m+i=k, m \geq N}(B/h)^m \\
   &\leq \sum_{k \geq N}
         \frac{h^k x^{k}}{[k]_q!} \times \frac{(B/h)^N}{1-B/h} 
    = \frac{(B/h)^N}{1-B/h} \sum_{i \geq 0}
                     \frac{h^{N+i} x^{N+i}}{[N+i]_q!} \\
   &= \frac{B^N}{1-B/h} \phi_N(x;h).
\end{align*}
(\ref{4.4}) is verified as follows:
\begin{align*}
   &\phi_m(x;h_0) *_q \phi_n(x;h)
   = \sum_{i \geq 0} \frac{{h_0}^i x^{m+i}}{[m+i]_q!} *_q
      \sum_{j \geq 0} \frac{h^j x^{n+j}}{[n+j]_q!} \\
   &=\sum_{i \geq 0, j \geq 0}
           \frac{{h_0}^ih^j x^{m+n+i+j+1}}{[m+n+i+j+1]_q!} 
   = \sum_{l \geq 0} \frac{h^l x^{m+n+1+l}}{[m+n+1+l]_q!} 
      \sum_{i+j=l} (h_0/h)^i \\
   &\leq \frac{1}{1-h_0/h} \phi_{m+n+1}(x;h).
\end{align*}
\end{proof}

    To see the estimate of $\phi_m(x;h)$, it is enough to use
\[
      \phi_m(x;h) \leq \frac{x^m}{[m]_q!} \exp_q(hx),
      \quad x>0 
\]
and the following result (see Proposition 5.5 in \cite{ramis}).

\begin{lem}\label{Lemma4.3}
    Let $q>1$. We have
\[
    \log (\exp_q(x))
    = \frac{(\log x)^2}{2\log q}
     + \Bigl( - \frac{1}{2}+ \frac{\log(q-1)}{\log q} \Bigr)
         \log x + O(1) 
\]
{\rm (}as $x \longrightarrow +\infty$ in $\BR${\rm )}.
\end{lem}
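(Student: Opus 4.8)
The plan is to work directly from the Euler product representation
\[
    \exp_q(x)= \prod_{m \geq 0}\bigl(1+(q-1)q^{-m-1}x\bigr),
\]
recorded in \S 3.2, which gives at once, for $x>0$,
\[
    \log\bigl(\exp_q(x)\bigr)
      = \sum_{m \geq 0} \log\bigl(1+a_m\bigr),
    \qquad a_m=(q-1)q^{-m-1}x .
\]
Writing $L=L(x)=\log((q-1)x)/\log q$, one has $\log a_m=(L-m-1)\log q$, so the terms cross the value $1$ near the index $m=L-1$: for $m$ below it $a_m\geq 1$ is large, while for $m$ above it $a_m<1$ decays geometrically. First I would fix $K=\lfloor L-1\rfloor$ and split the sum at $m=K$, treating the two regimes with different approximations of $\log(1+y)$.

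For the head $0\le m\le K$ I would use the exact identity $\log(1+a_m)=\log a_m+\log(1+a_m^{-1})$. The dominant contribution is the arithmetic sum $\sum_{m=0}^{K}\log a_m=(\log q)\sum_{m=0}^{K}(L-1-m)$, and it is here that the entire $\log x$ coefficient is produced. Setting $\theta=\{L-1\}\in[0,1)$ so that $L-1=K+\theta$, a short computation gives
\[
    \sum_{m=0}^{K}(L-1-m)
      = \frac{K(K+1)}{2}+(K+1)\theta
      = \frac{L^2}{2}-\frac{L}{2}+\frac{\theta(1-\theta)}{2}.
\]
Since $\theta(1-\theta)/2$ is bounded, multiplying by $\log q$ and inserting $L=(\log(q-1)+\log x)/\log q$ yields exactly
\[
    \frac{(\log x)^2}{2\log q}
      +\Bigl(\frac{\log(q-1)}{\log q}-\frac{1}{2}\Bigr)\log x+O(1),
\]
the crucial $-\tfrac12\log x$ arising precisely from the $-L/2$ term. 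The remaining head piece $\sum_{m=0}^{K}\log(1+a_m^{-1})$ is $O(1)$: since $1\le a_K<q$, reindexing by $j=K-m$ gives $a_{K-j}^{-1}=a_K^{-1}q^{-j}\le q^{-j}$, so the sum is bounded by $\sum_{j\ge 0}\log(1+q^{-j})<\infty$.

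Finally, for the tail $m>K$ I would use $0\le\log(1+a_m)\le a_m$ together with the geometric identity $\sum_{m>K}a_m=x\,q^{-K-1}$, which is $O(1)$ because $q^{-K-1}\asymp 1/((q-1)x)$; hence this part contributes $O(1)$ as well. Adding the three pieces produces the claimed expansion. The one genuinely delicate point is the head computation: a crude leading-order estimate of $\sum_{m=0}^{K}\log a_m$ recovers only the $(\log x)^2$ term and the $\frac{\log(q-1)}{\log q}\log x$ part, and it is the careful bookkeeping of the boundary index $K=\lfloor L-1\rfloor$---through the exact $-L/2$ term and the bounded remainder $\theta(1-\theta)/2$---that supplies the $-\tfrac12$ in the $\log x$ coefficient. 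Everything else reduces to routine geometric-series bounds.
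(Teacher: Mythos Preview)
Your argument is correct. The split at $K=\lfloor L-1\rfloor$, the exact evaluation $\sum_{m=0}^{K}(L-1-m)=L^2/2-L/2+\theta(1-\theta)/2$, the geometric control of $\sum_{m\le K}\log(1+a_m^{-1})$ via $a_{K-j}^{-1}\le q^{-j}$, and the tail bound $\sum_{m>K}a_m=xq^{-K-1}=q^{\theta}/(q-1)=O(1)$ all check out, and the substitution $L=(\log(q-1)+\log x)/\log q$ produces precisely the stated coefficients.

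As for comparison with the paper: there is nothing to compare, because the paper does not prove Lemma~4.3 at all---it simply cites Proposition~5.5 of Ramis~\cite{ramis}. Your write-up therefore supplies a complete, self-contained proof where the paper relies on an external reference. The approach you take (working directly from the Euler product and splitting at the crossover index) is the natural one and is essentially the same computation one finds in Ramis; the only point worth flagging is that your derivation tacitly assumes $K\ge 0$, i.e.\ $x\ge q/(q-1)$, which is of course harmless since the statement is asymptotic as $x\to+\infty$.
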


   As to the estimate of $(a*_qf)(\xi,z)$, by Lemma \ref{Lemma3.3} 
and (\ref{4.4}) of Lemma \ref{Lemma4.2} we have

\begin{lem}\label{Lemma4.4}
    Let $a(\xi,z)$ be a holomorphic function
on $\BC \times D_R$, and let $f(\xi,z)$ be a holomorphic function 
on $S_I(r) \times D_R$. If
\begin{align*}
   &a(\xi,z) \ll A \phi_m(\xi;h_0) \quad \mbox{in $\BC[[\xi]]$
               for any $z \in D_R$}, \\
   &|f(\xi,z)| \leq B \phi_n(|\xi|;h) 
                 \quad \mbox{on $S_I(r) \times D_R$}
\end{align*}
hold for some $A>0$, $h>h_0>0$ and $B>0$, we have
\[
    |(a *_q f)(\xi,z)| \leq \frac{AB}{1-h_0/h}
                \phi_{m+n+1}(|\xi|;h) \quad
        \mbox{on $S_I(qr) \times D_R$}.
\]
\end{lem}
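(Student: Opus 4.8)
\textbf{Proof plan for Lemma \ref{Lemma4.4}.}
The plan is to combine the two tools already at hand: the majorant/size transfer for $q$-convolutions in Lemma \ref{Lemma3.3}(2), and the explicit majorant estimate for a $q$-convolution of two $\phi$-functions in \eqref{4.4} of Lemma \ref{Lemma4.2}. First I would check that the hypotheses of Lemma \ref{Lemma3.3} are met: $a(\xi,z)$ is holomorphic on $\BC\times D_R$ (in particular on $D_r\times D_R$ for the relevant $r$), and $f(\xi,z)$ is holomorphic on $S_I(r)\times D_R$; the growth hypothesis $|f(\xi,z)|\le B\phi_n(|\xi|;h)$ gives, via Lemma \ref{Lemma4.3} (so that $\phi_n(x;h)$ grows only sub-exponentially, slower than any $x^{-\alpha}$ blows up near $0$ when multiplied by a positive power), the boundedness of $\xi^{\alpha}f(\xi,z)$ on $S_I\times D_R$ for suitable $0<\alpha<1$. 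Thus $(a*_q f)(\xi,z)$ is well-defined and holomorphic on $S_I(qr)\times D_R$.

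Next I would apply the quantitative part, Lemma \ref{Lemma3.3}(2), with the comparison functions $A(\xi)=A\,\phi_m(\xi;h_0)$ (a genuine convergent power series in $\xi$, majorizing $a(\xi,z)$ coefficientwise for each fixed $z\in D_R$ by hypothesis) and $F(x)=B\,\phi_n(x;h)$ (a continuous function on $x\ge 0$ dominating $|f(\xi,z)|$). The conclusion of that lemma then yields
\[
    |(a*_q f)(\xi,z)| \le \bigl( (A\,\phi_m(\,\cdot\,;h_0)) *_q (B\,\phi_n(\,\cdot\,;h)) \bigr)(|\xi|)
    = AB\,\bigl( \phi_m(\,\cdot\,;h_0) *_q \phi_n(\,\cdot\,;h) \bigr)(|\xi|)
\]
on $S_I(qr)\times D_R$, where in the last step I pull the scalars $A$ and $B$ out of the bilinear $q$-convolution.

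Finally I would invoke \eqref{4.4} of Lemma \ref{Lemma4.2}, which is applicable precisely because $h>h_0>0$, to bound $\phi_m(x;h_0)*_q\phi_n(x;h)\le \frac{1}{1-h_0/h}\,\phi_{m+n+1}(x;h)$ for $x>0$. Chaining the two inequalities gives
\[
    |(a*_q f)(\xi,z)| \le \frac{AB}{1-h_0/h}\,\phi_{m+n+1}(|\xi|;h)
    \quad\text{on } S_I(qr)\times D_R,
\]
which is exactly the claimed estimate. I do not anticipate a serious obstacle here: the only point requiring a little care is the verification of the $\xi^\alpha$-boundedness hypothesis of Lemma \ref{Lemma3.3}, i.e.\ checking that the sub-exponential growth of $\phi_n(|\xi|;h)$ recorded in Lemma \ref{Lemma4.3} is indeed dominated by $|\xi|^{-\alpha}$ for some $0<\alpha<1$ on all of $S_I$; everything else is a direct substitution into the already-established lemmas.
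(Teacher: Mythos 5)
Your argument is exactly the paper's: Lemma \ref{Lemma4.4} is stated there as an immediate consequence of Lemma \ref{Lemma3.3} (applied with the majorants $A\phi_m(\xi;h_0)$ and $B\phi_n(x;h)$) combined with the inequality (\ref{4.4}) of Lemma \ref{Lemma4.2}, which is precisely the chain of estimates you propose. Your extra care about the $\xi^{\alpha}$-boundedness hypothesis of Lemma \ref{Lemma3.3} is reasonable (note it is really a condition near $\xi=0$, where $\phi_n$ vanishes, so it holds trivially), but otherwise the route is identical.
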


%
%
%

\subsection{On a basic equation}\label{subsection 4.2}

   We set
\[
    {\mathscr H}[w]= P(\xi,z)w+ \sum_{0 \leq i<m_0}
            c_{i,0}(\xi,z) *_q(\xi^iw)
\]
and consider the equation
\begin{equation}
    {\mathscr H}[w] = g(\xi,z).   \label{4.5}
\end{equation}
Let $N$, $h$ and $\beta$ be as in Proposition \ref{Proposition4.1}.  
We have

\begin{lem}\label{Lemma4.5}
    Let $0<R_1 \leq R$, and
let $g(\xi,z) \in {\mathcal O}(S_I(r) \times D_{R_1})$ satisfy 
$|g(\xi,z)| \leq C\phi_n(|\xi|;h)$ on $S_I(r) \times D_{R_1}$
for some $C>0$ and $n \geq N$.  Then, the equation {\rm (\ref{4.5})} 
has a unique solution 
$w(\xi,z) \in {\mathcal O}(S_I(r) \times D_{R_1})$
which satisfies the estimate
\[
    |w(\xi,z)| 
    \leq \frac{C}{\delta(1-\beta)|\xi|^{m_0}(1+|\xi|)^{m-m_0}}
      \phi_n(|\xi|;h) \quad \mbox{on $S_I(r) \times D_{R_1}$}.
\]
\end{lem}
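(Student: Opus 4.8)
\textbf{Proof plan for Lemma \ref{Lemma4.5}.}

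The plan is to solve (\ref{4.5}) by a fixed-point / majorant argument, rewriting it as
\[
   w = P(\xi,z)^{-1}\Bigl( g(\xi,z) - \sum_{0 \leq i<m_0}
          c_{i,0}(\xi,z) *_q(\xi^i w) \Bigr) =: {\mathscr T}[w],
\]
which makes sense because h${}_2$) guarantees $|P(\xi,z)| \geq \delta|\xi|^{m_0}(1+|\xi|)^{m-m_0}$ on $S_I(r) \times D_{R_1}$, so division by $P$ is a bounded operation there. First I would set up the Banach space of functions $w \in {\mathcal O}(S_I(r) \times D_{R_1})$ for which the norm
\[
   \|w\| = \sup_{S_I(r) \times D_{R_1}}
     \frac{|\xi|^{m_0}(1+|\xi|)^{m-m_0}\,|w(\xi,z)|}{\phi_n(|\xi|;h)}
\]
is finite, and show that ${\mathscr T}$ maps this space into itself and is a contraction. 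The target estimate in the statement is exactly the bound $\|w\| \leq C/(\delta(1-\beta))$.

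The heart of the matter is the estimate on the nonlinear—actually linear—term $\sum_{0 \leq i<m_0} c_{i,0} *_q (\xi^i w)$. Given $\|w\| \leq \rho$, one has $|\xi^i w(\xi,z)| \leq \rho\,|\xi|^{i-m_0}(1+|\xi|)^{-(m-m_0)}\phi_n(|\xi|;h) \leq \rho\,|\xi|^{i-m_0}\phi_n(|\xi|;h)$, and then by (\ref{4.2}) of Lemma \ref{Lemma4.2} (applicable since $i<m_0 \leq N \leq n$, so $k=m_0-i$ satisfies $1 \leq k \leq n$) this is $\leq \rho\,\frac{[n-m_0+i]_q!}{[n]_q!}\phi_{n-m_0+i}(|\xi|;h)$. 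Since $c_{i,0} \ll C_{i,0}\phi_{m_0-i-1}(\xi;h_0)$ with $h_0<h$, Lemma \ref{Lemma4.4} gives
\[
   |c_{i,0} *_q (\xi^i w)|
   \leq \frac{C_{i,0}\,\rho}{1-h_0/h}\,\frac{[n-m_0+i]_q!}{[n]_q!}\,
        \phi_{(m_0-i-1)+(n-m_0+i)+1}(|\xi|;h)
   = \frac{C_{i,0}\,\rho}{1-h_0/h}\,\frac{[n-m_0+i]_q!}{[n]_q!}\,\phi_n(|\xi|;h).
\]
Dividing by $|P|$ (which loses a factor $\delta^{-1}|\xi|^{-m_0}(1+|\xi|)^{-(m-m_0)}$) and summing over $i$, the operator-norm contribution of this term to $\|{\mathscr T}[w]\|$ is at most $\rho$ times $\frac{1}{\delta}\sum_{0 \leq i<m_0}\frac{C_{i,0}}{1-h_0/h}\cdot\frac{[n-m_0+i]_q!}{[n]_q!}$. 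Here I need the elementary inequality $[n-m_0+i]_q!/[n]_q! \leq 1/[n]_q \leq 1/[N]_q$ (valid because $n \geq N \geq m_0 > i$, so the quotient is a product of reciprocals of $q$-integers each $\geq [n]_q$... more precisely $[n-m_0+i]_q!/[n]_q! = 1/([n-m_0+i+1]_q\cdots[n]_q) \leq 1/[n]_q \leq 1/[N]_q$), which bounds the sum by $\beta$. The main obstacle is precisely bookkeeping this chain of inequalities carefully so that the constant collapses to exactly $\beta$; the $q$-factorial manipulation is where an off-by-one in the index range would break the argument.

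Consequently ${\mathscr T}[w_1] - {\mathscr T}[w_2] = -P^{-1}\sum_i c_{i,0} *_q (\xi^i(w_1-w_2))$ satisfies $\|{\mathscr T}[w_1]-{\mathscr T}[w_2]\| \leq \beta\|w_1-w_2\|$ with $\beta<1$, so ${\mathscr T}$ is a contraction; and $\|{\mathscr T}[0]\| = \|P^{-1}g\| \leq C/\delta$ since $|g| \leq C\phi_n(|\xi|;h)$. The unique fixed point $w$ then satisfies $\|w\| \leq \|{\mathscr T}[0]\|/(1-\beta) \leq C/(\delta(1-\beta))$, which is the claimed estimate. Holomorphy of $w$ on $S_I(r) \times D_{R_1}$ follows because the fixed-point iteration converges uniformly on compact subsets and each iterate is holomorphic (here I use Lemma \ref{Lemma3.3}(1) to see that $c_{i,0} *_q(\xi^i w)$ is again holomorphic on $S_I(r) \times D_{R_1}$ — strictly, on $S_I(qr) \times D_{R_1}$, so one shrinks $r$ at the outset or notes that $c_{i,0}$ is entire in $\xi$ and the convolution of an entire majorant with a function bounded by $\phi_n$ stays on $S_I(r)$), and division by the non-vanishing $P$ preserves holomorphy. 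Uniqueness is immediate from the contraction property.
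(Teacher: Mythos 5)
Your proposal is correct and is essentially the paper's own argument in different packaging: the paper solves (\ref{4.5}) by successive approximations $w=\sum_{k\ge 0}w_k$ with $Pw_0=g$ and $Pw_k=-\sum_i c_{i,0}*_q(\xi^i w_{k-1})$, which is exactly the Picard iteration for your contraction ${\mathscr T}$, and it establishes the per-step gain of a factor $\beta$ by the same chain of estimates (the weight $|\xi|^{-m_0}(1+|\xi|)^{-(m-m_0)}$ from h${}_2$), then (\ref{4.2}) with $k=m_0-i$, the bound $[n-(m_0-i)]_q!/[n]_q!\le 1/[N]_q$, and Lemma \ref{Lemma4.4}). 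Your side worry about $S_I(qr)$ versus $S_I(r)$ is harmless since $q>1$ gives $S_I(qr)\supset S_I(r)$ and the $c_{i,0}$ are entire in $\xi$.
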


\begin{proof}
    We solve the equation (\ref{4.5}) by the method
of successive approximations.  We set a formal solution
\[
    w(\xi,z)=\sum_{k \geq 0} w_k(\xi,z),
    \quad w_k(\xi,z) \in {\mathcal O}(S_I(r) \times D_{R_1}) 
        \enskip  (k \geq 0)
\]
and determine $w_k(\xi,z)$ ($k \geq 0$) by a solution of the
following system of recursive formulas:
\begin{equation}
           P(\xi,z)w_0=f(\xi,z) \label{4.6}
\end{equation}
and for $k \geq 1$
\begin{equation}
     P(\xi,z)w_k 
     = - \sum_{0 \leq i<m_0} c_{i,0}(\xi,z) *_q (\xi^iw_{k-1}).
            \label{4.7}
\end{equation}
Since $P(\xi,z) \ne 0$ on $S_I(r) \times D_{R_1}$, 
we can determine $w_k(\xi,z) \in {\mathcal O}(S_I(r) \times D_{R_1})$ 
($k=0,1,2,\ldots$) inductively on $k$.
\par
   Let us show the convergence of this formal solution.
By (\ref{4.6}) we have
$w_0(\xi,z)=f(\xi,z)/P(\xi,z)$ and so by the assumption we have
\[
    |w_0(\xi,z)| 
        \leq \frac{C}{\delta |\xi|^{m_0}(1+|\xi|)^{m-m_0}}
        \phi_n(|\xi|;h) \quad \mbox{on $S_I(r) \times D_{R_1}$}.
\]
Since $0 \leq i<m_0$ and $n \geq N$ hold, by (4.2) we have
\begin{align*}
    |\xi^iw_0(\xi,z)| 
    &\leq \frac{C}{\delta |\xi|^{m_0-i}(1+|\xi|)^{m-m_0}}
            \phi_n(|\xi|;h) \\
    &\leq \frac{C}{\delta (1+|\xi|)^{m-m_0}}
     \frac{[n-(m_0-i)]_q!}{[n]_q!} \phi_{n-(m_0-i)}(|\xi|;h) \\
    &\leq \frac{C}{\delta} \frac{1}{[n]_q}
            \phi_{n-(m_0-i)}(|\xi|;h)
          \leq \frac{C}{\delta} \frac{1}{[N]_q}
            \phi_{n-(m_0-i)}(|\xi|;h)
\end{align*}
and so by Lemma \ref{Lemma4.4} we have
\[
    |(c_{i,0}(\xi,z)*_q (\xi^iw_0)|(\xi,z) 
    \leq 
    \frac{C_{i,0}}{(1-h_0/h)} \frac{C}{\delta} \frac{1}{[N]_q}
            \phi_{n}(|\xi|;h) \
        \quad \mbox{on $S_I(r) \times D_{R_1}$}.
\]
Therefore, we have
\[
     \biggl| \sum_{0 \leq i<m_0} 
           c_{i,0}(\xi,z)*_q (\xi^iw_0) \biggr|
     \leq \sum_{0 \leq i<m_0}\frac{C_{i,0}}{(1-h_0/h)}
            \frac{C}{\delta} \frac{1}{[N]_q}
            \phi_{n}(|\xi|;h).
\]
Thus, by (\ref{4.7}) (with $k=1$) and the assumption h${}_2$)
we have
\begin{align*}
    |w_1(\xi,z)| 
    &\leq \frac{1}{\delta |\xi|^{m_0}(1+|\xi|)^{m-m_0}}
          \sum_{0 \leq i<m_0}\frac{C_{i,0}}{(1-h_0/h)}
                             \frac{C}{\delta} \frac{1}{[N]_q}
            \phi_{n}(|\xi|;h) \\
    &= \frac{C \beta}{\delta |\xi|^{m_0}(1+|\xi|)^{m-m_0}}
            \phi_{n}(|\xi|;h)
        \quad \mbox{on $S_I(r) \times D_{R_1}$}.
\end{align*}
\par
    Repeating the same argument as above, we have the estimates
\[
     |w_k(\xi,z)| 
     \leq \frac{C \beta^k}{\delta |\xi|^{m_0}(1+|\xi|)^{m-m_0}}
            \phi_{n}(|\xi|;h)
        \quad \mbox{on $S_I(r) \times D_{R_1}$}
\]
for $k=0,1,2,\ldots$. Since $0 \leq \beta<1$ is supposed, this shows 
that the formal solution is convergent to a true solution $w(\xi,z)$ 
on $S_I(r) \times D_{R_1}$ and it satisfies
\[
    |w(\xi,z)| 
    \leq \frac{C}{\delta(1-\beta) |\xi|^{m_0}(1+|\xi|)^{m-m_0}}
            \phi_{n}(|\xi|;h)
        \quad \mbox{on $S_I(r) \times D_{R_1}$}.
\]
This proves the existence part of Lemma \ref{Lemma4.5}. 
The uniqueness of the solution can be proved in the same way.
\end{proof}

   In subsection 4.3 we will use the norm 
$\|w(\xi)\|_{\rho}= \sup_{z \in D_{\rho}}|w(\xi,z)|$ and 
the following Nagumo's lemma (see Nagumo \cite{nagumo} or 
Lemma 5.1.3 in H\"{o}rmander \cite{hor}).

\begin{lem}\label{Lemma4.6}
   If a holomorphic function $\varphi(z)$ on $D_R$ satisfies
\[
    \| \varphi \|_{\rho} \leq \frac{A}{(R-\rho)^a} \quad
    \mbox{for any $0 < \rho <R$}
\]
for some $A>0$ and $a \geq 0$, we have the estimates
\[
    \bigl\| \partial_{z_i}\varphi \bigr\|_{\rho}
     \leq \frac{(a+1)eA}{(R-\rho)^{a+1}} \quad
     \mbox{for any $0 < \rho <R$ and $i=1,\ldots,d$}.
\]
\end{lem}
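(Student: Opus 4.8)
The plan is to prove this classical Nagumo estimate by a one-variable Cauchy integral bound followed by a short optimization over an auxiliary radius. Fix an index $i \in \{1,\ldots,d\}$, a radius $\rho$ with $0<\rho<R$, and a point $z=(z_1,\ldots,z_d) \in D_\rho$. Choose an intermediate radius $\rho'$ with $\rho<\rho'<R$ and put $t=\rho'-\rho$. Since $|z_i|<\rho$, the closed disk $\{w \in \BC \,;\, |w-z_i| \leq t\}$ lies inside $\{|w|<\rho'\}$, while $|z_j|<\rho<\rho'$ for $j \ne i$; hence the slice $w \mapsto \varphi(z_1,\ldots,w,\ldots,z_d)$ (with $w$ in the $i$-th slot) is holomorphic near that disk and bounded there by $\|\varphi\|_{\rho'}$. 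Cauchy's formula for the first derivative,
\[
    \partial_{z_i}\varphi(z)
    = \frac{1}{2\pi i}\int_{|w-z_i|=t}
       \frac{\varphi(z_1,\ldots,w,\ldots,z_d)}{(w-z_i)^2}\,dw,
\]
then gives $|\partial_{z_i}\varphi(z)| \leq \|\varphi\|_{\rho'}/t$ by the standard $ML$-estimate.

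Feeding in the hypothesis $\|\varphi\|_{\rho'} \leq A/(R-\rho')^a$ and taking the supremum over $z \in D_\rho$ yields $\|\partial_{z_i}\varphi\|_\rho \leq A/\big(t\,(R-\rho')^a\big)$ for every admissible $\rho'$. It remains to choose $\rho'$ optimally. Writing $L=R-\rho$ and $s=\rho'-\rho \in (0,L)$, so that $t=s$ and $R-\rho'=L-s$, the task reduces to maximizing $g(s)=s(L-s)^a$ on $(0,L)$. A routine differentiation gives $g'(s)=(L-s)^{a-1}\big(L-(a+1)s\big)$, so the maximum occurs at $s^*=L/(a+1)$, with value $g(s^*)=a^a L^{a+1}/(a+1)^{a+1}$. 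This produces $\|\partial_{z_i}\varphi\|_\rho \leq A(a+1)^{a+1}/\big(a^a L^{a+1}\big)$.

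Finally, the elementary inequality $(1+1/a)^a \leq e$ converts this into the stated constant: since $(a+1)^{a+1}/a^a=(a+1)(1+1/a)^a \leq (a+1)e$, we obtain exactly $\|\partial_{z_i}\varphi\|_\rho \leq (a+1)eA/(R-\rho)^{a+1}$. The one genuinely delicate point—indeed the whole content beyond the routine Cauchy estimate—is pinning down the clean constant $(a+1)e$: this is precisely where the optimal choice $s^*=L/(a+1)$ and the bound $(1+1/a)^a \leq e$ are needed, a suboptimal $\rho'$ giving a worse constant. The degenerate case $a=0$ should be noted separately, since then $g(s)=s$ has no interior maximizer; there one simply lets $\rho' \uparrow R$ to get $\|\partial_{z_i}\varphi\|_\rho \leq A/(R-\rho) \leq eA/(R-\rho)$, which is consistent with the formula.
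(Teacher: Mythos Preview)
Your proof is correct and is precisely the classical Nagumo argument. Note that the paper does not actually supply a proof of this lemma: it simply states the result and refers the reader to Nagumo's original paper and to Lemma~5.1.3 in H\"{o}rmander, so there is no ``paper's own proof'' to compare against. Your write-up gives exactly the standard derivation those references contain: a one-variable Cauchy estimate with an intermediate radius $\rho'$, followed by the optimization $s^*=(R-\rho)/(a+1)$ and the inequality $(1+1/a)^a\leq e$ to obtain the constant $(a+1)e$. The separate treatment of $a=0$ is a nice touch and fully justified.
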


%
%
%

\subsection{Proof of Proposition 4.1}\label{subsection 4.3}

    Let $f(\xi,z)$, $B$, $N$ and $h$ be as in Proposition 
\ref{Proposition4.1}.
First, let us construct a formal solution $u(\xi,z)$ of (\ref{4.1})
in the form
\[
    u(\xi,z)=\sum_{n \geq N}u_n(\xi,z),
      \quad u_n(\xi,z) \in {\mathcal O}(S_I(r) \times D_R)
      \enskip (n \geq N)
\]
so that $u_n(\xi,z)$ ($n \geq N$) are solutions of the 
following recursive formulas:
\begin{equation}
     {\mathscr H}[u_N]= f(\xi,z)  \label{4.8}
\end{equation}
and for $n \geq N+1$
\begin{align}
     {\mathscr H}[u_n]
      = &- \sum_{m_0 \leq i \leq m}
         c_{i,0}(\xi,z) *_q (\xi^i u_{n-1}) \label{4.9} \\
       &\qquad - \sum_{i+\sigma |\alpha| \leq m, |\alpha|>0}
              c_{i,\alpha}(\xi,z) *_q
      (1 *_q (\xi^i \partial_z^{\alpha}u_{n-1})). \notag
\end{align}
We set $L=[m/\sigma]$ (the integer part of $m/\sigma$). 
Let us show

\begin{lem}\label{Lemma4.7} 
    $u_n(\xi,z) \in {\mathcal O}(S_I(r) \times D_R)$ 
{\rm (}$n \geq N${\rm )} 
are uniquely determined inductively on $n$ so that {\rm (\ref{4.8})} 
and {\rm (\ref{4.9})} are satisfied. 
In addition, there are $M>0$ and $H>0$ such that
\begin{equation}
   \| u_n(\xi)\|_{\rho}
   \leq \frac{MH^n}{|\xi|^{m_0}(1+|\xi|)^{m-m_0}(R-\rho)^{L(n-1)}}
         \phi_n(|\xi|;h) 
        \quad \mbox{on $S_I(r)$} \label{4.10}
\end{equation}
holds for any $0<\rho<R$ and any $n \geq N$.
\end{lem}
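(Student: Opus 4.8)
The plan is to prove Lemma~\ref{Lemma4.7} by induction on $n$, solving the recursive system \eqref{4.8}--\eqref{4.9} via Lemma~\ref{Lemma4.5} at each step and propagating the estimate \eqref{4.10}. First I would address the base case $n=N$: by hypothesis $|f(\xi,z)|\le B\phi_N(|\xi|;h)$ on $S_I(r)\times D_R$, so applying Lemma~\ref{Lemma4.5} with $g=f$, $n=N$, $C=B$, $R_1=R$ yields a unique $u_N(\xi,z)\in{\mathcal O}(S_I(r)\times D_R)$ with
\[
   |u_N(\xi,z)|\le\frac{B}{\delta(1-\beta)|\xi|^{m_0}(1+|\xi|)^{m-m_0}}\phi_N(|\xi|;h),
\]
which is \eqref{4.10} for $n=N$ provided $M$ and $H$ are chosen so that $MH^N\ge B/(\delta(1-\beta))$; note the exponent $L(n-1)$ on $(R-\rho)$ causes no trouble here since $u_N$ is already defined on all of $D_R$ (we may harmlessly absorb a factor $R^{L(N-1)}$ into $M$, or simply note $(R-\rho)^{-L(N-1)}\ge R^{-L(N-1)}$).

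For the inductive step, assume \eqref{4.10} holds for $n-1$. The right-hand side of \eqref{4.9} involves two kinds of terms. For the terms $c_{i,0}*_q(\xi^i u_{n-1})$ with $m_0\le i\le m$: using $|\xi^i u_{n-1}|\le |\xi|^{i-m_0}(1+|\xi|)^{-(m-m_0)}MH^{n-1}(R-\rho)^{-L(n-2)}\phi_{n-1}(|\xi|;h)$ and the estimate $(1+|\xi|)^{-(m-m_0)}|\xi|^{i-m_0}\le 1$ for $i\le m$, then \eqref{4.2} to trade the powers of $|\xi|$ back against factorials (here $i\ge m_0$ so actually we gain; the key point is boundedness), followed by Lemma~\ref{Lemma4.4} with $c_{i,0}\ll C_{i,0}\phi_0(\xi;h_0)$ (hypothesis h${}_3$), one gets a bound of the form $\mathrm{const}\cdot MH^{n-1}(R-\rho)^{-L(n-2)}\phi_n(|\xi|;h)$. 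For the terms $c_{i,\alpha}*_q(1*_q(\xi^i\partial_z^\alpha u_{n-1}))$ with $|\alpha|>0$: here I would first apply Nagumo's Lemma~\ref{Lemma4.6} $|\alpha|$ times to control $\partial_z^\alpha u_{n-1}$, producing at most $(R-\rho)^{-|\alpha|}$ extra and an explicit Nagumo constant; since $i+\sigma|\alpha|\le m$ gives $|\alpha|\le m/\sigma$, hence $|\alpha|\le L$, this is where the exponent $L$ enters \eqref{4.10}. Then convolve twice against $\phi_0$-type majorants (once for the extra factor $1=\xi^0$ and once for $c_{i,\alpha}$, using h${}_3$ for $m_0\le i\le m$, or $c_{i,\alpha}\ll C_{i,\alpha}\phi_{m_0-i-1}(\xi;h_0)$ if $i<m_0$) via Lemma~\ref{Lemma4.4}. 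In every case the convolution identity \eqref{4.4} bumps the index $\phi_{n-1}\mapsto\phi_n$ exactly once per application of $*_q$, and since there are $O(1)$ terms in the sum with $O(1)$ total convolutions, collecting everything gives a bound $C_\star MH^{n-1}(R-\rho)^{-L(n-1)}\phi_n(|\xi|;h)$ for the whole right-hand side of \eqref{4.9}, where $C_\star$ depends only on the data $(C_{i,\alpha},h,h_0,\delta,m,m_0,\sigma)$ and the Nagumo constants. Finally, applying Lemma~\ref{Lemma4.5} once more (with $g$ the right-hand side of \eqref{4.9}, which indeed satisfies the required $\phi_n$-estimate with $n\ge N$) produces the unique $u_n$ and divides by $\delta(1-\beta)|\xi|^{m_0}(1+|\xi|)^{m-m_0}$, yielding \eqref{4.10} for $n$ as long as we have chosen $H\ge C_\star/(\delta(1-\beta))$.

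The main obstacle I anticipate is bookkeeping the $(R-\rho)$-exponents precisely so that they close at $L(n-1)$ rather than growing faster: each inductive step loses $(R-\rho)^{-|\alpha|}$ with $|\alpha|\le L$ from the finitely many differentiations, plus the Nagumo constant $(a+1)e$ grows with the exponent $a=L(n-2)$, contributing a polynomial-in-$n$ factor $(L(n-2)+1)^{|\alpha|}e^{|\alpha|}$ that must be absorbed into $H^n$ — this is fine because $\sup_n (Ln)^L/K^n<\infty$ for any $K>1$, so enlarging $H$ once suffices. A secondary delicate point is checking that the convolution estimates are uniform in $z\in D_{\rho}$ and compatible with Lemma~\ref{Lemma3.3}'s hypothesis (the $\xi^\alpha f$ bounded condition); but since $\phi_n(|\xi|;h)/|\xi|^{m_0}(1+|\xi|)^{m-m_0}$ decays like $|\xi|^{n-m_0}$ near $\xi=0$ and $n\ge N\ge m_0$, the relevant boundedness near the origin holds, and the growth at infinity is governed by $\exp_q$ which is exactly the class Lemma~\ref{Lemma4.4} is designed for. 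Once \eqref{4.10} is established, summing over $n\ge N$ the geometric-type series $\sum_n MH^n(R_1-\rho)^{-L(n-1)}\phi_n$ — after fixing $0<R_1<R$ and shrinking to make $H/(R-R_1)^L$ times the $\phi_n\to\phi_{n+?}$ bumps controllable, i.e.\ re-summing via \eqref{4.3} — will give Proposition~\ref{Proposition4.1}; but that final re-summation is the content of subsection~4.3 after this lemma, not of the lemma itself.
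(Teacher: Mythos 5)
Your skeleton (induct on $n$, solve each step with Lemma \ref{Lemma4.5}, control $\partial_z^\alpha u_{n-1}$ with Nagumo's Lemma \ref{Lemma4.6}, and bump $\phi_{n-1}$ to $\phi_n$ via the convolution estimates) is the same as the paper's, and your $(R-\rho)$-bookkeeping closes correctly because $|\alpha|\le L$ and $R\le 1$. But there is a genuine gap exactly at the point you flag as the ``main obstacle'': you propose to absorb the Nagumo factor $(L(n-2)+1)\cdots(L(n-2)+|\alpha|)e^{|\alpha|}\sim (Ln)^{|\alpha|}$ into $H^n$ on the grounds that $\sup_n (Ln)^L/K^n<\infty$. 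That reasoning would work if the polynomial factor appeared once, but in an induction it appears at \emph{every} step: to pass from the bound $MH^{n-1}(\cdots)$ to $MH^{n}(\cdots)$ you need $C_\star n^{|\alpha|}\le H$ for all $n$, which fails for large $n$ no matter how large $H$ is. Equivalently, if you let the factors accumulate, after $n$ steps you have $\prod_k C_\star k^{L}\sim C_\star^{n}(n!)^{L}$, which is not $O(H^n)$ for any $H$. So as written the induction does not close.

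The paper's resolution is to cancel $n^{|\alpha|}$ at each step rather than absorb it. Whenever $|\alpha|>0$, the constraint $i+\sigma|\alpha|\le m$ forces $i<m$, so after dividing $\|\xi^i\partial_z^\alpha u_n\|_\rho$ by $|\xi|^{m_0}(1+|\xi|)^{m-m_0}$ there is always at least one spare negative power of $|\xi|$ (namely $|\xi|^{-(m_0-i)}$ with $m_0-i\ge1$ when $i<m_0$, and $|\xi|^{i-m_0+1}(1+|\xi|)^{-(m-m_0)}\cdot|\xi|^{-1}\le|\xi|^{-1}$ when $m_0\le i<m$). Feeding that spare power into (\ref{4.2}) with $k\ge1$ converts it into the factor $[n-k]_q!/[n]_q!\le 1/[n]_q$, and then
\[
   c_0=\sup_{|\alpha|\le L,\ n\ge1}\frac{n^{|\alpha|}}{[n]_q}<\infty
\]
(the constant (\ref{4.12}) in the paper) kills the Nagumo polynomial uniformly in $n$; the lost index in $\phi_{n-k}$ is then recovered by the convolutions $1*_q$ and $c_{i,\alpha}*_q$ (which raise the index by exactly $1$ and by $m_0-i$ or $1$ respectively, via (\ref{4.4}) and h${}_3$)), landing on $\phi_{n+1}$. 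For $|\alpha|=0$ there is no Nagumo factor, so no cancellation is needed. Your proof becomes correct once you replace the ``absorb into $H^n$'' step by this $n^{|\alpha|}/[n]_q\le c_0$ cancellation; without it, the second half of the lemma is not established.
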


\begin{proof}
    By applying Lemma \ref{Lemma4.5} to the equation (\ref{4.8}) 
we have a unique solution 
$u_N(\xi,z) \in {\mathcal O}(S_I(r) \times D_R)$ such that
\[
    |u_N(\xi,z)| 
    \leq \frac{B}{\delta(1-\beta)|\xi|^{m_0}(1+|\xi|)^{m-m_0}}
        \phi_N(|\xi|;h) \quad \mbox{on $S_I(r) \times D_R$}.
\]
Therefore, if $M$ and $H$ satisfy 
$MH^N \geq B/(\delta (1-\beta))$ we have (\ref{4.10}) for $n=N$
and any $0<\rho<R$.
\par
   Let us show the general case by induction on $n$. Suppose 
that (\ref{4.10}) is already proved for any $0<\rho<R$. Then, by 
Lemma \ref{Lemma4.6} we have
\begin{align}
   \| \partial_z^{\alpha}u_n(\xi)\|_{\rho}
   &\leq 
    \frac{MH^n e^{|\alpha|}(L(n-1)+1) \cdots (L(n-1)+|\alpha|)}
         {|\xi|^{m_0}(1+|\xi|)^{m-m_0}(R-\rho)^{L(n-1)+|\alpha|}}
         \phi_n(|\xi|;h) \label{4.11}\\
   &\leq \frac{MH^n (eL)^{|\alpha|}n^{|\alpha|}}
         {|\xi|^{m_0}(1+|\xi|)^{m-m_0}(R-\rho)^{Ln}}
         \phi_n(|\xi|;h) \quad \mbox{on $S_I(r)$} \notag
\end{align}
for any $|\alpha| \leq L$ and $0<\rho<R$.
\par
   If $0 \leq i<m_0$ and $|\alpha|>0$, by (\ref{4.11}) and (\ref{4.2}) 
we have
\begin{align*}
   \| \xi^i \partial_z^{\alpha}u_n(\xi)\|_{\rho}
   &\leq \frac{MH^n (eL)^{|\alpha|}n^{|\alpha|}}
         {(R-\rho)^{Ln}} \frac{1}{|\xi|^{m_0-i}} \phi_n(|\xi|;h) \\
   &\leq \frac{MH^n (eL)^{|\alpha|}}
         {(R-\rho)^{Ln}} \frac{n^{|\alpha|}}{[n]_q}
         \phi_{n-(m_0-i)}(|\xi|;h) \\
   &\leq \frac{MH^n (eL)^{|\alpha|} c_0}
         {(R-\rho)^{Ln}}
         \phi_{n-(m_0-i)}(|\xi|;h) \quad \mbox{on $S_I(r)$}
\end{align*}
where
\begin{equation}
   c_0 = \sup_{|\alpha| \leq L, n \geq 1} 
          \frac{n^{|\alpha|}}{[n]_q}
       = \sup_{|\alpha| \leq L, n \geq 1} 
          \frac{n^{|\alpha|}(q-1)}{q^n-1} < \infty. \label{4.12}
\end{equation}
Hence, we have
\[
   \| 1 *_q (\xi^i \partial_z^{\alpha}u_n)\|_{\rho} 
   \leq \frac{MH^n (eL)^{|\alpha|} c_0}
         {(R-\rho)^{Ln}}
         \phi_{n-(m_0-i)+1}(|\xi|;h) \quad \mbox{on $S_I(r)$}
\]
and so in the case $0 \leq i<m_0$ and $|\alpha|>0$ we obtain
\begin{equation}
   \|c_{i,\alpha} *_q 
              (1 *_q (\xi^i \partial_z^{\alpha}u_n))\|_{\rho}
   \leq \frac{C_{i,\alpha}}{1-h_0/h} 
        \frac{MH^n (eL)^{|\alpha|} c_0}{(R-\rho)^{Ln}}
         \phi_{n+1}(|\xi|;h) \label{4.13}
\end{equation}
on $S_I(r)$ for any $0<\rho<R$.
\par
   If $m_0 \leq i \leq m$ and $|\alpha|=0$, by (\ref{4.11}) we have
\[
   \| \xi^i u_n(\xi)\|_{\rho}
   \leq \frac{MH^n}{(R-\rho)^{Ln}} \phi_n(|\xi|;h) 
   \quad \mbox{on $S_I(r)$}
\]
and so we have
\begin{equation}
   \|c_{i,0} *_q (\xi^iu_n)\|_{\rho}
   \leq \frac{C_{i,0}}{1-h_0/h} 
        \frac{MH^n}{(R-\rho)^{Ln}}
    \phi_{n+1}(|\xi|;h) \quad \mbox{on $S_I(r)$} \label{4.14}
\end{equation}
for any $0<\rho<R$
\par
   If $m_0 \leq i \leq m$ and $|\alpha|>0$, by the condition
$i+\sigma |\alpha| \leq m$ we have $i<m$. In this case, 
by (\ref{4.11}) and (\ref{4.2}) we have
\begin{align*}
   \| \xi^i \partial_z^{\alpha}u_n(\xi)\|_{\rho}
   &\leq \frac{MH^n (eL)^{|\alpha|}n^{|\alpha|}}
              {(R-\rho)^{Ln}} 
         \frac{|\xi|^{i-m_0+1}}{(1+|\xi|)^{m-m_0}} 
                  \frac{1}{|\xi|}\phi_n(|\xi|;h) \\
   &\leq \frac{MH^n (eL)^{|\alpha|}}{(R-\rho)^{Ln}} 
         \frac{n^{|\alpha|}}{[n]_q}\phi_{n-1}(|\xi|;h) \\
   &\leq \frac{MH^n (eL)^{|\alpha|} c_0}
         {(R-\rho)^{Ln}}
         \phi_{n-1}(|\xi|;h) \quad \mbox{on $S_I(r)$}
\end{align*}
and so 
\begin{align*}
   \| 1*_q(\xi^i \partial_z^{\alpha}u_n)\|_{\rho}
   &\leq \frac{MH^n (eL)^{|\alpha|} c_0}
         {(R-\rho)^{Ln}}
         \phi_{n}(|\xi|;h) \quad \mbox{on $S_I(r)$}.
\end{align*}
By applying $c_{i,\alpha} *_q$ to this estimate we obtain
\begin{equation}
   \|c_{i,\alpha} *_q 
        ( 1*_q(\xi^i \partial_z^{\alpha}u_n))\|_{\rho}
   \leq \frac{C_{i,\alpha}}{1-h_0/h} 
        \frac{MH^n (eL)^{|\alpha|} c_0}{(R-\rho)^{Ln}}
         \phi_{n+1}(|\xi|;h) \label{4.15}
\end{equation}
on $S_I(r)$ for any $0<\rho<R$
\par
   Thus, by (\ref{4.13}), (\ref{4.14}), (\ref{4.15}) and by setting
$\Lambda=\{(i,\alpha)\,;\, i+\sigma |\alpha| \leq m \}
      \setminus \{(i,0) \,;\, 0 \leq i<m_0 \}$ we have
\begin{align*}
    &\bigl\| \mbox{RHS of $(\ref{4.9})$ 
              (with $n$ replaced by $n+1$)} \bigr\|_{\rho} \\
    &\leq 
      \sum_{(i,\alpha) \in \Lambda}\frac{C_{i,\alpha}}{1-h_0/h} 
        \frac{MH^n (eL)^{|\alpha|} c_0}{(R-\rho)^{Ln}}
         \phi_{n+1}(|\xi|;h) \quad \mbox{on $S_I(r)$}
\end{align*}
and by applying Lemma \ref{Lemma4.5} to the equation 
(\ref{4.9}) (with $n$ replaced by $n+1$) we have a unique solution 
$u_{n+1}(\xi,z) \in {\mathcal O}(S_I(r) \times D_R)$ such
that
\begin{align*}
    \|u_{n+1} \|_{\rho} 
    &\leq \frac{1}{\delta(1-\beta)|\xi|^{m_0}(1+|\xi|)^{m-m_0}} \\
    &\qquad 
       \times
      \sum_{(i,\alpha) \in \Lambda}\frac{C_{i,\alpha}}{1-h_0/h} 
        \frac{MH^n (eL)^{|\alpha|} c_0}{(R-\rho)^{Ln}}
         \phi_{n+1}(|\xi|;h) \quad \mbox{on $S_I(r)$}
\end{align*}
for any $0<\rho<R$.  Thus, if we take $H>0$ sufficiently large
so that 
\[
    H \geq \frac{1}{\delta(1-\beta)}
     \sum_{(i,\alpha) \in \Lambda}
          \frac{C_{i,\alpha}(eL)^{|\alpha|} c_0}{1-h_0/h}
\]
we have (\ref{4.10}) (with $n$ replaced by $n+1$).  This proves 
Lemma \ref{Lemma4.7}.
\end{proof}

   In Lemma \ref{Lemma4.7}, by taking $H>0$ large enough we may 
suppose that $2H \geq h$ holds: then we have $2H/(R-\rho)^L>h$
for any $0<\rho<R$. By Lemma \ref{Lemma4.7} and (\ref{4.3}) 
we have
\begin{align*}
   &\sum_{n \geq N} \|u_n(\xi) \|_{\rho} \\
   &\leq \sum_{n \geq N} 
  \frac{MH^n}{|\xi|^{m_0}(1+|\xi|)^{m-m_0}(R-\rho)^{L(n-1)}}
         \phi_n(|\xi|;h) \\
   &\leq
  \frac{M(R-\rho)^L}{|\xi|^{m_0}(1+|\xi|)^{m-m_0}}
         \sum_{n \geq N} \Bigl( \frac{H}{(R-\rho)^L} 
           \Bigr)^n \phi_n(|\xi|;2H/(R-\rho)^L) \\
   &\leq \frac{M(R-\rho)^L}{|\xi|^{m_0}(1+|\xi|)^{m-m_0}}
           \times 2 \times \Bigl( \frac{H}{(R-\rho)^L} 
           \Bigr)^{N}\phi_N(|\xi|; 2H/(R-\rho)^L).
\end{align*}
This shows that the sum $\sum_{n \geq N}u_n(\xi,z)$ is
convergent to a true solution $u(\xi,z)$ of (\ref{4.1}) on 
$S_I(r) \times D_R$ and it satisfies
\[
    \|u(\xi) \|_{\rho} \leq 
        \frac{2M}{|\xi|^{m_0}(1+|\xi|)^{m-m_0}}
         \frac{H^N}{(R-\rho)^{L(N-1)}} 
        \phi_N \Bigl(|\xi|; \frac{2H}{(R-\rho)^L} \Bigr) 
\]
on $S_I(r)$ for any $0 <\rho<R$. 
This proves the existence part of Proposition \ref{Proposition4.1}. 
\par
   Lastly, let us show the uniqueness of the solution. To do so,
it is enough to prove the following result:

\begin{prop}\label{Proposition4.8}
    Let $0<R_1<R$.  Suppose that 
$u(\xi,z) \in {\mathcal O}(S_I(r) \times D_{R_1})$ satisfies
\begin{align*}
   &P(\xi,z)u + \sum_{0 \leq i \leq m}
        c_{i,0}(\xi,z) *_q (\xi^i u) \\
   &+ \sum_{i+\sigma |\alpha| \leq m, |\alpha|>0}
     c_{i,\alpha}(\xi,z) *_q (1 *_q (\xi^i \partial_z^{\alpha}u))
        = 0 \quad  \mbox{on $S_I(r) \times D_{R_1}$} 
\end{align*}
and 
\[
     |u(\xi,z)| \leq \frac{M}{|\xi|^{m_0}(1+|\xi|)^{m-m_0}}
                 \phi_N(|\xi|;h_1) \quad 
        \mbox{on $S_I(r) \times D_{R_1}$} 
\]
for some $M>0$ and $h_1 (> h_0)$.  Then, we have $u(\xi,z)=0$ on
$S_I(r) \times D_{R_1}$.
\end{prop}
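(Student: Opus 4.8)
The plan is to show that $u$ is the zero solution by a bootstrapping argument on the growth order at $\xi = 0$, combined with the contraction estimate already built into Lemma~\ref{Lemma4.5}. First I would rewrite the homogeneous equation in the form ${\mathscr H}[u] = g(\xi,z)$, where
\[
   g(\xi,z) = - \sum_{m_0 \leq i \leq m} c_{i,0}(\xi,z) *_q (\xi^i u)
             - \sum_{i+\sigma|\alpha| \leq m,\ |\alpha|>0}
                 c_{i,\alpha}(\xi,z) *_q (1 *_q (\xi^i \partial_z^{\alpha} u)).
\]
The point is that $g$ gains regularity relative to $u$: using the assumed bound $|u(\xi,z)| \leq M |\xi|^{-m_0}(1+|\xi|)^{-(m-m_0)} \phi_N(|\xi|;h_1)$ together with \eqref{4.2}, Lemma~\ref{Lemma4.4}, and the structural estimates in h${}_3$), each term on the right is dominated by a constant times $\phi_{N+1}(|\xi|;h_1)$ on any slightly smaller polydisc $D_{R_1'}$ (the derivatives $\partial_z^\alpha u$ are controlled on $D_{R_1'}$ by a Cauchy/Nagumo estimate, Lemma~\ref{Lemma4.6}). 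Feeding this back through Lemma~\ref{Lemma4.5}, one concludes
\[
   |u(\xi,z)| \leq \frac{C_1}{\delta(1-\beta)\,|\xi|^{m_0}(1+|\xi|)^{m-m_0}}\,
                 \phi_{N+1}(|\xi|;h_1)
   \quad\text{on } S_I(r) \times D_{R_1'}.
\]

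Next I would iterate. At the $k$-th stage we will have a bound of the same shape with $\phi_N$ replaced by $\phi_{N+k}$ and with $R_1$ shrunk to some $R_1^{(k)}$; but the key is that the constant accumulates only the factor coming from the $m_0 \leq i$ terms, and — as in the proof of Lemma~\ref{Lemma4.7} and the definition of $\beta$ — passing from $\phi_{N+k}$ to $\phi_{N+k+1}$ costs a multiplicative factor bounded by $\beta \cdot (\text{const}/[N+k]_q)^{\#}$, i.e.\ a factor that tends to $0$ as $k\to\infty$ (this is exactly why the additional condition $N\geq m_0$ and $\beta<1$, and the $1/[n]_q$ gains from \eqref{4.2}, were arranged). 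To keep the polydisc from collapsing I would, instead of shrinking $R_1$ infinitely often, fix one intermediate radius $R_1 < R_2 < R$ and absorb the Nagumo loss $(R_2-\rho)^{-L}$ into the constant once and for all, running the iteration at a fixed pair of radii $(R_1, R_2)$ exactly as in §4.3. Then the product of the shrinking factors over $k = 0,1,2,\dots$ shows that for every fixed $\xi$ the bound on $|u(\xi,z)|$ is $\leq C \cdot \bigl(\prod_{k} \beta_k\bigr) \cdot \phi_{N+K}(|\xi|;h_1)/(|\xi|^{m_0}(1+|\xi|)^{m-m_0})$ for all $K$, and since $\phi_{N+K}(x;h_1)\to 0$ pointwise in $x\geq 0$ as $K\to\infty$ (each is $O(x^{N+K})$ near $0$, and for fixed $x>0$ the tail of the defining series goes to $0$), we force $u(\xi,z)=0$ for every $(\xi,z)\in S_I(r)\times D_{R_1}$.

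Alternatively — and this is probably the cleanest route — I would run the argument as a genuine fixed-point/contraction statement: the operator $T$ sending $v$ to the unique ${\mathscr H}$-solution with right-hand side built from $v$ as above is, on the Banach space of functions with finite norm $\sup_{S_I(r)\times D_{R_1'}} |\xi|^{m_0}(1+|\xi|)^{m-m_0}|v(\xi,z)|/\phi_N(|\xi|;h_1)$, a strict contraction with constant $\leq \beta + o(1) < 1$ once $R_1'$ is chosen close enough to $R_1$ and $N$ is large (which is the standing hypothesis); since $u$ is a fixed point of $T$ and $0$ is a fixed point of $T$, uniqueness of the fixed point gives $u=0$. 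The main obstacle is purely bookkeeping: making the loss of radius in the Nagumo step compatible with getting a genuine contraction (rather than merely a bounded operator), which is handled exactly as in the existence proof by working with the two fixed radii and the factor $(R-\rho)^{-L}$ folded into $H$; no new idea beyond Lemmas~\ref{Lemma4.4}–\ref{Lemma4.6} is needed.
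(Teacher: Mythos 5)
Your first route is, in structure, exactly the paper's proof: by the same induction as in Lemma~\ref{Lemma4.7} one upgrades the hypothesis to $\| u(\xi)\|_{\rho}\leq MH^{n}|\xi|^{-m_0}(1+|\xi|)^{-(m-m_0)}(R_1-\rho)^{-L(n-1)}\phi_n(|\xi|;h_1)$ for every $n\geq N$ and every $0<\rho<R_1$, and then lets $n\to\infty$. But your accounting of why the limit is zero is wrong, and as stated that step would fail. The multiplicative cost of passing from $\phi_n$ to $\phi_{n+1}$ does \emph{not} tend to zero: for the terms with $m_0\leq i\leq m$ and $\alpha=0$ there is no $1/[n]_q$ gain at all (compare (\ref{4.14})), and for the $|\alpha|>0$ terms the $1/[n]_q$ gain from (\ref{4.2}) is exactly consumed by the Nagumo factors $n^{|\alpha|}$ --- that is precisely what the constant $c_0$ in (\ref{4.12}) is for. (Note also that $\beta$ in Proposition~\ref{Proposition4.1} is the contraction constant for solving the basic equation (\ref{4.5}), built only from the $0\leq i<m_0$ terms; it is not the per-step factor of the outer iteration.) So the accumulated constant is a genuinely geometric $H^n(R_1-\rho)^{-Ln}$ with $H$ possibly large, your product $\prod_k\beta_k$ diverges, and mere pointwise convergence $\phi_{N+K}(x;h_1)\to 0$ is not enough. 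What closes the argument --- and what the paper uses --- is the quantitative bound $\phi_n(x;h_1)\leq x^n\exp_q(h_1x)/[n]_q!$ together with $[n]_q!\geq q^{n(n-1)/2}$, which beats any geometric growth $\bigl(H|\xi|/(R_1-\rho)^L\bigr)^n$ for each fixed $\xi$ and $\rho$. With that substitution your first argument is the paper's.

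The ``cleaner'' fixed-point alternative does not work as stated, and the obstacle is not bookkeeping. The operator $T$ involves $\partial_z^{\alpha}v$, so it is not a bounded map of any single Banach space of holomorphic functions on $S_I(r)\times D_{R_1'}$ with a sup-type norm into itself; one is forced into a scale of Nagumo norms, at which point the ``contraction constant'' in the fixed weighted norm (weight $\phi_N$) is bounded below by a constant of order $H$ that need not be less than $1$, since the one-step gain is only the shift $\phi_n\mapsto\phi_{n+1}$ and $\sup_{x}\phi_{n+1}(x;h_1)/\phi_n(x;h_1)$ does not go to zero. The uniqueness here really is an ``iterate the order to infinity and use the super-geometric growth of $[n]_q!$'' statement, not a one-step contraction on a fixed space.
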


\begin{proof}
    By the same argument as in the proof 
of Lemma \ref{Lemma4.7} we can show that there are $M>0$ and $H>0$ 
such that
\begin{equation}
   \| u(\xi)\|_{\rho}
   \leq \frac{MH^n}{|\xi|^{m_0}(1+|\xi|)^{m-m_0}
                               (R_1-\rho)^{L(n-1)}}
         \phi_n(|\xi|;h_1)  \label{4.16} 
\end{equation}
on  $S_I(r)$ for any $0<\rho<R$ and any $n \geq N$. Since 
$[n+i]_q! \geq [n]_q![i]_q!$ holds, we have
\[
    \phi_n(|\xi|;h_1) 
    \leq \frac{|\xi|^n}{[n]_q!} \exp_q(h_1|\xi|).
\]
Therefore, by (\ref{4.16}) we have
\[
   \| u(\xi)\|_{\rho}
   \leq \frac{M(R-\rho)^L}{|\xi|^{m_0}(1+|\xi|)^{m-m_0}}
        \Bigl( \frac{H}{(R_1-\rho)^{L}} \Bigr)^n
        \frac{|\xi|^n}{[n]_q!} \exp_q(h_1|\xi|)
\]
on $S_I(r)$ for any $0<\rho<R$. Since
$[n]_q! \geq q^{n(n-1)/2}$ (where $p=1/q$) holds, 
by letting $n \longrightarrow \infty$ we obtain 
$\|u(\xi) \|_{\rho}=0$ on $S_I(r)$ for any $0<\rho<R_1$.  
This proves that $u(\xi,z)=0$ holds on $S_I(r) \times D_{R_1}$.
\end{proof}

%
%
%

\section{Proof of Theorem 2.3}\label{section 5}

   In this section, we will prove Theorem \ref{Theorem2.3}. 
In the next subsection 5.1, we give estimates of the coefficients
$X_n(z)$ ($n \geq 0$) of the formal solution, and in 
subsection 5.2 we prove Theorem \ref{Theorem2.3} by using 
Proposition \ref{Proposition4.1}.

%
%
%

\subsection{Estimates of the formal solution}\label{subsection 5.1}

    Let us show

\begin{prop}\label{Proposition5.1}
    Suppose the conditions {\rm (\ref{2.1})} and 
$a_{m_0,0}(0,0) \ne 0$.  Then, if equation {\rm (\ref{1.1})} has a 
formal solution 
$\hat{X}(t,z)= \sum_{n \geq 0}X_n(z)t^n \in {\mathcal O}_{R_0}[[t]]$
{\rm (}with $R_0>0${\rm )}, we can find $0<R<R_0$, $C>0$ and $h>0$ 
such that $|X_n(z)| \leq C h^n[n]_q!$ holds on $D_R$ for any
$n=0,1,2,\ldots$.
\end{prop}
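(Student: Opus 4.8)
The plan is to write down the recursion obeyed by the coefficients $X_n(z)$ and then run a majorant induction on a scale of concentric discs, entirely parallel to the proof of Lemma~\ref{Lemma4.7}. First I would substitute $\hat{X}(t,z)=\sum_{n\ge 0}X_n(z)t^n$ into (\ref{1.1}). Expanding $a_{j,\alpha}(t,z)=\sum_{p\ge\mathrm{ord}_t(a_{j,\alpha})}a_{j,\alpha,p}(z)t^{p}$ and using $(tD_q)^{j}\partial_z^{\alpha}t^{\nu}=[\nu]_q^{j}\partial_z^{\alpha}t^{\nu}$, comparison of the coefficients of $t^{n}$ (with $F(t,z)=\sum_n F_n(z)t^n$) gives
\[
   \sum_{j+\sigma|\alpha|\le m}\ \sum_{p+\nu=n}a_{j,\alpha,p}(z)\,[\nu]_q^{j}\,\partial_z^{\alpha}X_{\nu}(z)=F_n(z).
\]
By Lemma~\ref{Lemma2.1} the only summands with $\nu=n$ (i.e.\ $p=0$) are those with $\alpha=0$ and $j\le m_0$; collecting them produces $P_1([n]_q;z)X_n(z)$, with $P_1$ as in Remark~\ref{Remark2.4}. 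Hence, for every $n$,
\[
   P_1([n]_q;z)\,X_n(z)=F_n(z)-\sum_{j+\sigma|\alpha|\le m}\ \sum_{p+\nu=n,\ p\ge 1}a_{j,\alpha,p}(z)\,[\nu]_q^{j}\,\partial_z^{\alpha}X_{\nu}(z),
\]
an identity valid because $\hat X$ is a formal solution, and every term of the sum is ``shifted'' ($\nu\le n-1$).

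Next I would make the standard reductions. Shrink $R_0$ to some $R'$ with $0<R'<\min\{R_0,1\}$ so that all $a_{j,\alpha}$ and $F$ are holomorphic and bounded on $\overline{D_{R'}}$, with Cauchy bounds $|a_{j,\alpha,p}(z)|\le A B^{p}$ and $|F_n(z)|\le A'B'^{\,n}$ on $D_{R'}$. Since $a_{m_0,0}(0,0)\ne 0$ and $[n]_q\to\infty$, after a further shrinking of $R'$ there are $\delta>0$ and $n_0\in\BN^{*}$ with $|P_1([n]_q;z)|\ge\delta[n]_q^{m_0}$ on $D_{R'}$ for all $n\ge n_0$, while the finitely many $X_n$ with $n<n_0$ are bounded on $\overline{D_{R'}}$. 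I would also record, from Lemma~\ref{Lemma2.1}, that in the shifted sum $p\ge\max\{1,\,j-m_0\}$ when $|\alpha|=0$ and $p\ge\max\{1,\,j-m_0+1\}$ when $|\alpha|>0$.

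Then, with $L=[m/\sigma]$, the claim to be proved by induction on $n$ is
\[
   \|X_n\|_{\rho}\le\frac{C h^{n}[n]_q!}{(R'-\rho)^{Ln}},\qquad 0<\rho<R',\ n\ge 0,
\]
for suitable $C,h>0$; evaluating at $\rho=R$ for any fixed $R\in(0,R')$ then gives $|X_n(z)|\le C\bigl(h/(R'-R)^{L}\bigr)^{n}[n]_q!$ on $D_R$, which is the assertion. The base case $n<n_0$ is immediate from boundedness (take $C$ large, $h\ge 1$). For $n\ge n_0$ one divides the second displayed identity by $P_1([n]_q;z)$: the derivatives $\partial_z^{\alpha}X_{\nu}$ are controlled by Nagumo's inequality (Lemma~\ref{Lemma4.6}), at the cost of a factor $(R'-\rho)^{-|\alpha|}\le(R'-\rho)^{-L}$ and a polynomial factor in $\nu$; the remaining arithmetic weight is $[\nu]_q^{j}[\nu]_q!/\bigl([n]_q^{m_0}[n]_q!\bigr)$, which — thanks to $p\ge 1$ and the sharper lower bounds on $p$ just recorded — is $\le 1$, and in fact $\le[\nu+1]_q^{-1}$ except in the lone critical case $|\alpha|=0$, $p=j-m_0$, where the factor $(B/h)^{p}$ with $p\ge1$ is available instead. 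Since $[\nu+1]_q$ is large when $\nu=n-p$ is close to $n$ while the Nagumo loss is only polynomial, splitting the $p$-sum at $p\sim n/2$ shows that the total shifted contribution, divided by $Ch^{n}[n]_q!/(R'-\rho)^{Ln}$, tends to $0$ as $n\to\infty$; likewise $|F_n|/(\delta[n]_q^{m_0})\le A'B'^{\,n}/\delta\le\tfrac12 Ch^{n}[n]_q!$ for $n$ large because $[n]_q!\ge q^{n(n-1)/2}$. Enlarging $n_0$, $C$ and $h$ makes each of these $<\tfrac12$, closing the induction.

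The hard part will be exactly this last uniform estimate: checking that the super-geometric decay of $[\nu]_q!/[n]_q!$, with the exponent $p$ forced by the Newton-polygon bound (\ref{2.1}), dominates the polynomial loss from the $z$-derivatives together with the geometric factors $(B/h)^{p}$ and $(R'-\rho)^{-L}$, uniformly over $1\le p\le n$ and over all admissible $(j,\alpha)$ — in particular treating the regime $p$ near $n$ (where $[\nu+1]_q$ gives nothing and one leans on $(B/h)^{p}$) separately from the regime $p$ small (where $[\nu+1]_q\sim q^{n}$ does the work). This is the $q$-Gevrey analogue of the classical Gevrey coefficient estimate, and it is the place where condition (\ref{2.1}) — rather than the stronger (\ref{2.2}) — is precisely what is used.
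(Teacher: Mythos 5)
Your proposal is correct and follows essentially the same route as the paper: the same recursion $P_1([n]_q;z)X_n=F_n-\sum a_{j,\alpha,l}([n-l]_q)^j\partial_z^\alpha X_{n-l}$ obtained by isolating the order-zero coefficients, the same lower bound $|P_1([n]_q;z)|\ge\delta(1+[n]_q)^{m_0}$ from $a_{m_0,0}(0,0)\ne0$, and the same induction on the Nagumo-type norms $\|X_n\|_\rho\le MH^n[n]_q!/(R-\rho)^{Ln}$ via Lemma \ref{Lemma4.6}. Your case analysis of the arithmetic weight (the ratio $[n-l]_q^{\,j}[n-l]_q!/([n]_q^{m_0}[n]_q!)$ against the polynomial Nagumo loss, with the geometric factor $(h/H)^l$ absorbing the critical case $|\alpha|=0$, $l=j-m_0$) is exactly the paper's estimate $K_{j,\alpha}(n)\le c_0[n]_q!$, so no splitting of the $l$-sum is actually needed.
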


\begin{proof}
     We set $a_{j,0}^0(t,z)=a_{j,0}(t,z)-a_{j,0}(0,z)$ 
for $0 \leq j \leq m_0$, and $a_{j,\alpha}^0(t,z)= a_{j,\alpha}(t,z)$
for $(j,\alpha)$ with $j>m_0$ or $|\alpha|>0$.  We set
\[
      P_1(\lambda;z)= \sum_{0 \leq j \leq m_0}
          a_{j,0}(0,z) \lambda^{j}.
\]
Then, we have $\textrm{ord}_t(a^0_{j,\alpha}) \geq p_{j,\alpha}$
with
\[
    p_{j,\alpha} = 
    \left\{ \begin{array}{ll}
          1, &\mbox{if $0 \leq j \leq m_0$}, \\
    j-m_0, &\mbox{if $m_0 < j \leq m$ and $|\alpha|=0$}, \\
    j-m_0+1, &\mbox{if $m_0< j \leq m$ 
            and $|\alpha|>0$}
    \end{array}
          \right.
\]
and the equation (\ref{1.1}) is expressed in the form
\begin{equation}
    P_1(tD_q;z)X + \sum_{j+\sigma |\alpha| \leq m}
            a_{j,\alpha}^0(t,z) (tD_q)^j \partial_z^{\alpha}X
     = F(t,z).  \label{5.1}
\end{equation}
Since $a_{m_0,0}(0,0) \ne 0$ holds, by taking $R>0$ sufficiently 
small and by taking $N \in \BN^*$ sufficiently large, we can take
$\delta>0$ such that
\begin{equation}
     |P_1([n]_q;z)| \geq \delta (1+[n]_q)^{m_0}
     \quad \mbox{on $D_R$ for any $n \geq N$}.  \label{5.2}
\end{equation}
\par
   We set
\begin{align*}
   &F(t,z)= \sum_{n \geq 0} F_n(z)t^n, \\
   &a_{j,\alpha}^0(t,z)= \sum_{n \geq p_{j,\alpha}}
              a_{j,\alpha,n}(z)t^n \quad
                (j+\sigma |\alpha| \leq m).
\end{align*}
Then, by (\ref{5.1}) we have the relation:
\begin{equation}
   P_1([n]_q; z)X_n = F_n(z)- \sum_{j+\sigma |\alpha| \leq m}
      \sum_{p_{i,\alpha} \leq l \leq n}
         a_{j,\alpha,l}(z)([n-l]_q)^j \partial_z^{\alpha}X_{n-l}
         \label{5.3}
\end{equation}
for any $n=0,1,2,\ldots$.  By taking $R>0$ sufficnetly small we 
may assume that $X_n(z)$ ($n \geq 0$), $F_n(z)$ ($n \geq 0$) and 
$a_{j,\alpha,n}(z)$ ($n \geq p_{j,\alpha}$) are all bounded 
holomorphic functions on $D_R$.  In addition, we may assume that 
$|F_n(z)| \leq Ah^n$ ($n \geq 0$) and 
$|a_{j,\alpha,n}(z)| \leq Ah^n$ ($n \geq p_{j,\alpha}$) hold
on $D_R$ for some $A>0$ and $h>0$. 
\par
   To prove Proposition \ref{Proposition5.1} it is sufficient to 
show the following lemma.

\begin{lem}\label{Lemma5.2}
    Let $L=[m/\sigma]$.  There are $M>0$ and $H>0$ such that
\begin{align}
     \|X_n \|_{\rho} \leq \frac{MH^n[n]_q!}{(R-\rho)^{Ln}}
          \quad \mbox{on $D_R$ for any $0<\rho<R$}
     \label{5.4}
\end{align}
holds for any $n=0,1,2,\ldots$.
\end{lem}

\begin{proof}
    Let $N$ be as in (\ref{5.2}). Since $X_n(z)$ ($0 \leq n \leq N$) 
are bounded holomorphic functions on $D_R$, by taking $M>0$ and $H>0$ 
sufficiently large we may suppose that (\ref{5.4}) is satisfied 
for all $0 \leq n \leq N$.
\par
   Let us show the general case by induction on $n$.
Let $n >N$, and suppose that (\ref{5.4}) (with $n$ replaced by $p$) 
is already proved for all $p<n$.  Then, by applying 
Lemma \ref{Lemma4.6} to the estimate (\ref{5.4}) (with $n$ replaced 
by $n-l$) we have
\begin{align*}
    \|\partial_z^{\alpha}X_{n-l}\|_{\rho}
    &\leq \frac{MH^{n-l}[n-l]_q! \times e^{|\alpha|}(L(n-l)+1) 
             \cdots (L(n-l)+|\alpha|)}
             {(R-\rho)^{L(n-l)+|\alpha|}} \\
    &\leq \frac{MH^{n-l}[n-l]_q! \times 
                      (eL)^{|\alpha|} (n-l+1)^{|\alpha|}}
           {(R-\rho)^{Ln}}
\end{align*}
for any $0<\rho<R$.  Therefore, by (\ref{5.2}), (\ref{5.3}) and the 
condition $0<R-\rho<1$ we have
\begin{align}
    &\|X_n \|_{\rho} \label{5.5} \\
    &\leq \frac{1}{\delta (1+[n]_q)^{m_0}}
          \Bigl[ Ah^n + \sum_{j+\sigma |\alpha| \leq m}
           \sum_{p_{j,\alpha} \leq l \leq n}
         Ah^l([n-l]_q)^j \|\partial_z^{\alpha}X_{n-l}\|_{\rho}
          \Bigr]   \notag \\
    &\leq \frac{1}{\delta (1+[n]_q)^{m_0}}
          \Bigl[ Ah^n + \sum_{j+\sigma |\alpha| \leq m}
           \sum_{p_{j,\alpha} \leq l \leq n}
         Ah^l([n-l]_q)^j \times   \notag \\
    &\qquad \qquad \qquad \qquad \quad \times
      \frac{MH^{n-l}[n-l]_q! \times 
                      (eL)^{|\alpha|} (n-l+1)^{|\alpha|}}
           {(R-\rho)^{Ln}} \Bigr]   \notag \\
    &\leq \frac{MH^n}{\delta (R-\rho)^{Ln}}\Bigl[ \frac{A}{M}
       \Bigl(\frac{h}{H} \Bigr)^n 
             + A \sum_{j+\sigma |\alpha| \leq m}
             K_{j,\alpha}(n)(eL)^{|\alpha|}
           \sum_{p_{j,\alpha} \leq l \leq n}
                \Bigl(\frac{h}{H} \Bigr)^l \Bigr] \notag 
\end{align}
for any $0<\rho<R$, where
\[
    K_{j,\alpha}(n)
    = \frac{([n-p_{j,\alpha}]_q)^j[n-p_{j,\alpha}]_q!
             (n-p_{j,\alpha}+1)^{|\alpha|}}{(1+[n]_q)^{m_0}}.
\]
In the case $0 \leq j \leq m_0$ we have $p_{j,\alpha}=1$ and so
\[
    K_{j,\alpha}(n)
    = \frac{([n-1]_q)^j}{(1+[n]_q)^{m_0}} 
         \frac{(n-1+1)^{|\alpha|}}{[n]_q}\times [n]_q!
       \leq c_0 [n]_q!
\]
where $c_0$ is the one in (\ref{4.12}).
In the case $m_0<j \leq m$ and $|\alpha|=0$ we have 
$p_{j,\alpha}=j-m_0$ and so
\begin{align*}
    K_{j,\alpha}(n)
    &\leq ([n-p_{j,\alpha}]_q)^{j-m_0}[n-p_{j,\alpha}]_q! \\
    &= ([n-(j-m_0)]_q)^{j-m_0} \times [n-(j-m_0)]_q!
    \leq [n]_q! \leq c_0 [n]_q!.
\end{align*}
In the case $m_0<j \leq m$ and $|\alpha|>0$ we have 
$p_{j,\alpha}=j-m_0+1$ and so
\begin{align*}
    K_{j,\alpha}(n)
    &\leq ([n-p_{j,\alpha}]_q)^{j-m_0}[n-p_{j,\alpha}]_q!
              (n-p_{j,\alpha}+1)^{|\alpha|} \\
    &= ([n-(j-m_0+1)]_q)^{j-m_0} \times [n-(j-m_0+1)]_q!\\
    &\hspace*{5cm} \times (n-(j-m_0+1)+1)^{|\alpha|} \\
    &\leq [n-1]_q! \times (n-(j-m_0+1)+1)^{|\alpha|} \\
    &= [n]_q! \times \frac{(n-(j-m_0+1)+1)^{|\alpha|}}{[n]_q}
               \leq c_0 [n]_q!.
\end{align*}
Therefore, by applying these estimates to (\ref{5.5}) we have
\begin{align*}
    \|X_n \|_{\rho}
    &\leq \frac{MH^n c_0 [n]_q!}
               {\delta (R-\rho)^{Ln}}\Bigl[ \frac{A}{M}
       \Bigl(\frac{h}{H} \Bigr)^n 
               + A \sum_{j+\sigma |\alpha| \leq m}
            (eL)^{|\alpha|}
           \sum_{p_{j,\alpha} \leq l \leq n}
                \Bigl(\frac{h}{H} \Bigr)^l \Bigr] \\
    &\leq \frac{MH^n c_0 [n]_q!}{\delta (R-\rho)^{Ln}}
          \Bigl[ \frac{A}{M} \Bigl(\frac{h}{H} \Bigr)^n 
          + A \sum_{j+\sigma |\alpha| \leq m} (eL)^{|\alpha|}
          \frac{(h/H)^{p_{j,\alpha}}}{1-h/H} \Bigr]
\end{align*}
for any $0<\rho<R$.  Thus, if the condition $H>h$ and 
\begin{equation}
   \frac{c_0}{\delta}
   \Bigl[ \frac{A}{M} + A \sum_{j+\sigma |\alpha| \leq m}
            (eL)^{|\alpha|}
                   \frac{(h/H)^{p_{j,\alpha}}}{1-h/H} \Bigr]
    \leq 1  \label{5.6}
\end{equation}
hold, we have the result (\ref{5.4}).  By taking $M>0$ and $H>0$
sufficiently large, we can get the condition (\ref{5.6}). 
This proves Lemma \ref{Lemma5.2}.
\end{proof}

    This completes the proof of Proposition \ref{Proposition5.1}.
\end{proof}

\begin{exa}\label{Example5.3}
     Let us consider 
\begin{equation}
    (tD_q+1)X = \frac{a}{1-z}t + t(t D_q)^2X + bt \partial_z^{\alpha}X,
    \label{5.7}
\end{equation}
where $a>0$, $b>0$ and $\alpha \in \BN^*$. This equation is a 
particular case of equations of type (1.1) with $m_0=1$
and $m=2$. The unique formal solution is given by 
$\hat{X}(t,z)= \sum_{n \geq 1}X_n(z)t^n$ with 
$X_1(z)=a/(([1]_q+1)(1-z))$
and
\[
    X_{n+1}(z)
    = \frac{({[1]_q}^2+b \partial_z^{\alpha}) \cdots 
            ({[n]_q}^2+b \partial_z^{\alpha})}
         {([1]_q+1)([2]_q+1) \cdots ([n+1]_q+1)}
            \Bigl(\frac{a}{(1-z)} \Bigr), \quad n \geq 1.
\]
It is easy to see that 
\[
    X_{n+1}(z) \gg \frac{[n]_q!}{2^{n+1} [n+1]_q} \frac{a}{(1-z)}
          \gg \frac{[n]_q!}{2^{n+1}(1+q)^n} \frac{a}{(1-z)}.
\]
Thus, in the case (\ref{5.7}), we can see that the estimates in 
Proposition \ref{Proposition5.1} is best possible.
\end{exa}

%
%
%

\subsection{Proof of Theorem 2.3}\label{subsection 5.2}

   Suppose (A${}_1$), (A${}_2$), (A${}_3$) and (\ref{2.2}).
Let $\hat{X}(t,z)= \sum_{n \geq 0}X_n(z)t^n$ be a 
formal solution of (\ref{1.1}). Let $\mu \in \BN^*$ be sufficiently
large and set
\begin{align*}
    X^0(t,z) &= \sum_{n \geq \mu} X_n^0(z)t^{n+1}
     \quad \mbox{with $X_n^0(z)=X_{n+1}(z)$ \,($n \geq \mu$)}, \\
    F^0(t,z) &= F(t,z)- \sum_{j+\sigma |\alpha| \leq m}
              a_{j,\alpha}(t,z) (tD_q)^j \partial_z^{\alpha}
              \sum_{0 \leq n \leq \mu} X_n(z)t^n.
\end{align*}
Then, $X^0(t,z)$ is a formal solution of the equation
\begin{equation}
    \sum_{j+\sigma |\alpha| \leq m}
     a_{j,\alpha}(t,z) (tD_q)^j \partial_z^{\alpha}X^0= F^0(t,z).
    \label{5.8}
\end{equation}

%
%

\subsubsection{Some formulas}\label{subsubsection 5.2.1}

    First, let us show

\begin{lem}\label{Lemma5.4}
   {\rm (1)} For $n \in \BN^*$ we have
\[
      t^n(tD_q)= \frac{1}{q^n} (tD_q-[n]_q)t^n.
\]
\par
   {\rm (2)} For $n \in \BN^*$ and $1 \leq i<n$ we have
\[
     (t^2D_q)t^{n-i} = q^{n-i} t^{n-i}(t^2D_q)+[n-i]_q t^{n-i+1}.
\]
\par
   {\rm (3)} For $n \in \BN^*$ we have
\[
     t^n(tD_q)^n = \frac{1}{q^{n(n-1)/2}} \sum_{i=1}^n
         H_{n,i} t^{n-i}(t^2D_q)^i,
\]
where $H_{n,n}=1$ {\rm (}$n \geq 1${\rm )} and $H_{n,i}$ 
{\rm (}$1 \leq i<n${\rm )} are
constants determined by the recurrence formula: 
\[
    H_{n,i}= q^{n-i}H_{n-1,i-1}+([n-1-i]_q-[n-1]_q)H_{n-1,i}.
\]
\end{lem}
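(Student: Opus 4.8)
The plan is to obtain (1) and (2) by a direct computation from the definition of $D_q$, and then to deduce (3) from (1) and (2) by induction on $n$. Throughout I read every expression as an operator acting on functions holomorphic near $t=0$, with the convention that, e.g., $t^{n-i}(t^2D_q)^i$ means ``apply $(t^2D_q)^i$ first, then multiply by $t^{n-i}$''; in particular $t\circ (tD_q)=t^2D_q$, since $t\cdot\bigl(t\,D_qf\bigr)=t^2D_qf$.

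For (1), apply both sides to an arbitrary $f=f(t,z)$. Since $D_q(t^nf)=\bigl(q^nt^nf(qt,z)-t^nf(t,z)\bigr)/((q-1)t)$, one computes $(tD_q-[n]_q)(t^nf)=\bigl(q^nt^nf(qt,z)-t^nf(t,z)-(q^n-1)t^nf(t,z)\bigr)/(q-1)=q^n\,t^n(tD_q)f$, which is the asserted identity after dividing by $q^n$. Formula (2) is the same one-line check: with $k=n-i\ge 1$, both $(t^2D_q)(t^kf)$ and $q^k\,t^k(t^2D_q)f+[k]_q\,t^{k+1}f$ are equal to $\bigl(q^kt^{k+1}f(qt,z)-t^{k+1}f(t,z)\bigr)/(q-1)$. (The range ``$1\le i<n$'' is only the range in which (2) will be used; the formula itself holds for every positive exponent, and trivially for exponent $0$ since $[0]_q=0$.)

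For (3) I argue by induction on $n$, adopting the convention $H_{m,0}=0$ so that the stated recurrence makes sense at $i=1$. The base case $n=1$ is just $t(tD_q)=t^2D_q$ with $H_{1,1}=1$. For the inductive step, assume the formula at level $n-1$. Using (1) with exponent $n-1$ together with $t\circ(tD_q)=t^2D_q$,
\[
 t^n(tD_q)^n=\frac{1}{q^{n-1}}\,t\bigl(tD_q-[n-1]_q\bigr)\,\bigl[t^{n-1}(tD_q)^{n-1}\bigr]=\frac{1}{q^{n-1}}\bigl(t^2D_q-[n-1]_q\,t\bigr)\bigl[t^{n-1}(tD_q)^{n-1}\bigr].
\]
Substituting the inductive hypothesis for $t^{n-1}(tD_q)^{n-1}$ and noting $(n-1)+(n-1)(n-2)/2=n(n-1)/2$, the prefactor becomes $q^{-n(n-1)/2}$; it then remains to compute $\bigl(t^2D_q-[n-1]_q\,t\bigr)\,t^{\,n-1-i}(t^2D_q)^i$ for $1\le i\le n-1$, which by (2) equals $q^{\,n-1-i}\,t^{\,n-1-i}(t^2D_q)^{i+1}+\bigl([n-1-i]_q-[n-1]_q\bigr)\,t^{\,n-i}(t^2D_q)^i$.

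Finally I reindex: shifting $i\mapsto i+1$ in the first family of terms and keeping the second, the coefficient of $t^{\,n-j}(t^2D_q)^j$ in $q^{n(n-1)/2}\,t^n(tD_q)^n$ comes out to be $q^{\,n-j}H_{n-1,j-1}+\bigl([n-1-j]_q-[n-1]_q\bigr)H_{n-1,j}$, which is exactly $H_{n,j}$ for $1\le j\le n-1$ (using $H_{n-1,0}=0$ at $j=1$), while for $j=n$ only the first family contributes and gives $q^{0}H_{n-1,n-1}=1=H_{n,n}$. This closes the induction. The only real work is the bookkeeping in this last step — tracking the powers of $q$ so that they collapse to $q^{n(n-1)/2}$, and checking that the two reindexed sums recombine into precisely the recurrence (including the boundary cases $j=1$ and $j=n$) — but no new idea is needed beyond (1) and (2).
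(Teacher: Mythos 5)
Your proof is correct and follows essentially the same route as the paper: parts (1) and (2) by direct computation with $D_q$ (the paper uses the $q$-Leibniz rule, a cosmetic difference), and part (3) by induction using (1) to pull one factor of $t$ through $(tD_q)$ and then (2) to commute $t^2D_q$ past powers of $t$. Your explicit handling of the boundary conventions $H_{n-1,0}=0$, the $j=n$ term, and the exponent-$0$ case of (2) is slightly more careful than the paper's, which simply asserts the reindexed identity.
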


\begin{proof}
   We know that $D_q(f(t)g(t))=D_q(f(t))g(t)+f(qt)D_q(g(t))$ holds.
Hence, we have
\begin{align*}
     (tD_q)(t^nf(t)) &= t([n]_qt^{n-1}f(t)+ (qt)^n D_q(f(t))) \\
     &= [n]_q t^n f(t) + q^n t^n(tD_q)(f(t)),
\end{align*}
that is, $(tD_q)t^n= [n]_q t^n+ q^n t^n(tD_q)$. This leads us to
(1). The result (2) is verified in the same way.
\par
   Let us show (3). The case $n=1$ is clear. Let us show the general
case by induction on $n$. Suppose that (3) is already proved. Then,
by (1) and (2) we have
\begin{align*}
    &t^{n+1}(tD_q)^{n+1} 
     = t(t^n (tD_q))(tD_q)^n
      = t \Bigl(\frac{1}{q^n} (tD_q-[n]_q)t^n \Bigr) (tD_q)^n \\
    &= \frac{1}{q^n} (t^2D_q-[n]_qt) \times 
         \frac{1}{q^{n(n-1)/2}} \sum_{i=1}^n
         H_{n,i} t^{n-i}(t^2D_q)^i \\
    &=\frac{1}{q^{n(n+1)/2}} \biggl[\sum_{i=1}^n H_{n,i}
         \Bigl( q^{n-i} t^{n-i}(t^2D_q)+[n-i]_q t^{n-i+1}
         \Bigr)(t^2D_q)^i\\
   &\qquad \qquad \qquad 
        - \sum_{i=1}^n [n]_q H_{n,i} t^{n-i+1}(t^2D_q)^i \biggr].
\end{align*}
This shows (3) with $n$ replaced by $n+1$. 
\end{proof}

%
%

\subsubsection{A reduction}\label{subsubsection 5.2.2}

    We set $b_{j,0}(t,z)=a_{j,0}(t,z)$
(for $0 \leq j<m_0$), $b_{j,0}(t,z)=t^{-(j-m_0)} a_{j,0}(t,z)$ 
(for $m_0 \leq j \leq m$), 
$b_{j,\alpha}(t,z)=t^{-1} a_{j,\alpha}(t,z)$
(for $0 \leq j<m_0$ and $|\alpha|>0$), and 
$b_{j,\alpha}(t,z)=t^{-(j-m_0+2)} a_{j,\alpha}(t,z)$ (for 
$m_0 \leq j<m$ and $|\alpha|>0$). Then, by (\ref{2.1}) and (\ref{2.2}) 
we see that $b_{j,\alpha}(t,z)$ ($j+\sigma |\alpha| \leq m$) are 
holomorphic functions in a neighborhood of 
$(0,0) \in \BC_t \times \BC_z^d$.
\par
   By multiplying (\ref{5.8}) by $t^{m_0}$ we have
\begin{align*}
    &\sum_{0 \leq j<m_0} t^{m_0-j}b_{j,0}(t,z) t^{j}(tD_q)^jX^0
       +\sum_{m_0 \leq j \leq m}b_{j,0}(t,z) t^{j}(tD_q)^jX^0 \\
    &\qquad \qquad + \sum_{0 \leq j<m_0, |\alpha|>0}
           t^{m_0-j+1}b_{j,\alpha}(t,z) t^{j}(tD_q)^j 
           \partial_z^{\alpha}X^0 \\
    &\qquad \qquad + \sum_{m_0 \leq j <m, |\alpha|>0}
           t^{2}b_{j,\alpha}(t,z) t^{j}(tD_q)^j 
           \partial_z^{\alpha}X^0 \\
    &= t^{m_0}F^0(t,z).
\end{align*}
Therefore, by setting 
$b_{j,0}^*(t,z)=t^{m_0-j} b_{j,0}(t,z)$ (for $0 \leq j<m_0$), 
$b_{j,0}^*(t,z)=b_{j,0}(t,z)$ (for $m_0 \leq j \leq m$),
$b_{j,\alpha}^*(t,z)=t^{m_0-j+1} b_{j,\alpha}(t,z)$ (for 
$0 \leq j<m_0$ and $|\alpha|>0$), and 
$b_{j,\alpha}^*(t,z)=t^{2} b_{j,\alpha}(t,z)$ (for 
$m_0 \leq j<m$ and $|\alpha|>0$) we have
\begin{equation}
    \sum_{j+\sigma |\alpha| \leq m}b_{j,\alpha}^*(t,z)
          t^{j}(tD_q)^j \partial_z^{\alpha}X^0
           = t^{m_0}F^0(t,z).  \label{5.9}
\end{equation}
Hence, by (3) of Lemma \ref{Lemma5.4} we have
\[
    \sum_{j+\sigma |\alpha| \leq m}
    b_{j,\alpha}^*(t,z) \frac{1}{q^{j(j-1)/2}}
         \sum_{i=1}^j H_{j,i}t^{j-i} (t^2D_q)^i
         \partial_z^{\alpha}X^0 = t^{m_0}F^0(t,z).
\]
This shows

\begin{lem}\label{Lemma5.5}
    The equation {\rm (\ref{5.9})} can be expressed in the
form
\begin{equation}
     \sum_{i+\sigma |\alpha| \leq m} A_{i,\alpha}(t,z)
           (t^2D_q)^i \partial_z^{\alpha} X^0 
     = t^{m_0}F^0(t,z)  \label{5.10}
\end{equation}
for some holomorphic functions $A_{i,\alpha}(t,z)$ 
{\rm (}$i+\sigma |\alpha| \leq m${\rm )}
in a neighborhood of $(0,0) \in \BC_t \times \BC_z^d$.
In addition, we have
$\mathrm{ord}_t(A_{i,0}) \geq m_0-i$ {\rm (}for $0 \leq i<m_0${\rm )}, 
$\mathrm{ord}_t(A_{i,0}) \geq 0$ {\rm (}for $m_0 \leq i \leq m${\rm )}, 
$\mathrm{ord}_t(A_{i,\alpha}) \geq m_0-i+1$ {\rm (}for 
$0 \leq i<m_0$ and $|\alpha|>0${\rm )}, 
$\mathrm{ord}_t(A_{i,\alpha}) \geq 2$ {\rm (}for 
$m_0 \leq i <m$ and $|\alpha|>0${\rm )}, and
$A_{i,0}(0,z)=b_{i,0}(0,z)/q^{i(i-1)/2}$ {\rm (}for 
$m_0 \leq i \leq m${\rm )}.
\end{lem}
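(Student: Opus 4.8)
The plan is to substitute the operator identity of Lemma~\ref{Lemma5.4}(3) into equation~(\ref{5.9}) and regroup the resulting double sum according to the exponent $i$ of $(t^2D_q)^i$. Writing $t^j(tD_q)^j = q^{-j(j-1)/2}\sum_{i=1}^{j}H_{j,i}\,t^{j-i}(t^2D_q)^i$ for $j\ge 1$ and $t^0(tD_q)^0=(t^2D_q)^0$ for $j=0$, equation~(\ref{5.9}) takes the form
\[
 \sum_{i+\sigma|\alpha|\le m}A_{i,\alpha}(t,z)\,(t^2D_q)^i\partial_z^{\alpha}X^0 = t^{m_0}F^0(t,z),
 \qquad
 A_{i,\alpha}(t,z)=\sum_{\substack{j\ge i\\ j+\sigma|\alpha|\le m}}\frac{H_{j,i}}{q^{j(j-1)/2}}\,t^{j-i}\,b^{*}_{j,\alpha}(t,z),
\]
with the conventions $H_{i,i}=1$ and, for the exponent $i=0$, $A_{0,\alpha}=b^{*}_{0,\alpha}$. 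First I would observe that the reindexing is legitimate: since $i\le j$, any pair $(i,\alpha)$ that occurs automatically satisfies $i+\sigma|\alpha|\le j+\sigma|\alpha|\le m$. Next, each $b^{*}_{j,\alpha}(t,z)$ is holomorphic near $(0,0)$ and every power $t^{j-i}$ in the sum has $j-i\ge 0$, so each $A_{i,\alpha}$ is a finite sum of holomorphic functions, hence holomorphic near $(0,0)$; this settles the first assertion.

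For the order statements I would first record the orders of the coefficients $b^{*}_{j,\alpha}$ directly from their definitions in the preceding reduction: $\mathrm{ord}_t(b^{*}_{j,0})\ge m_0-j$ for $0\le j<m_0$ and $\ge 0$ for $m_0\le j\le m$, while $\mathrm{ord}_t(b^{*}_{j,\alpha})\ge m_0-j+1$ for $0\le j<m_0$, $|\alpha|>0$, and $\ge 2$ for $m_0\le j<m$, $|\alpha|>0$ (here conditions (\ref{2.1}) and (\ref{2.2}) enter precisely to guarantee that the $b_{j,\alpha}$, and hence the $b^{*}_{j,\alpha}$, are holomorphic with these orders). Since $A_{i,\alpha}$ is a sum of terms $c_{j,i}\,t^{j-i}b^{*}_{j,\alpha}$ with constants $c_{j,i}$, we get $\mathrm{ord}_t(A_{i,\alpha})\ge \min_{j\ge i}\bigl(\mathrm{ord}_t(b^{*}_{j,\alpha})+(j-i)\bigr)$, and a short case analysis in $(i,\alpha)$ finishes it: for $|\alpha|=0$, $i<m_0$, both sub-ranges $j<m_0$ and $j\ge m_0$ give at least $m_0-i$; for $|\alpha|=0$, $m_0\le i\le m$, every term has order $\ge 0$, and only the $j=i$ term contributes to the value at $t=0$, giving $A_{i,0}(0,z)=H_{i,i}\,b^{*}_{i,0}(0,z)/q^{i(i-1)/2}=b_{i,0}(0,z)/q^{i(i-1)/2}$; for $|\alpha|>0$, $i<m_0$, both sub-ranges give at least $m_0-i+1$; and for $|\alpha|>0$, $m_0\le i<m$, each $b^{*}_{j,\alpha}$ with $j\ge i$ already has order $\ge 2$, so $\mathrm{ord}_t(A_{i,\alpha})\ge 2$.

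The argument is essentially combinatorial, so I do not anticipate a genuine obstacle; the only points deserving care are (i) the normalization of the summation indices, in particular the exponent $i=0$, which receives a contribution only from $j=0$ in the original equation, and (ii) the observation, needed for the value of $A_{i,0}(0,z)$, that the terms with $j>i$ carry a strictly positive power of $t$ and therefore vanish at $t=0$. Everything else is term-by-term order counting against the four families of bounds for the $b^{*}_{j,\alpha}$.
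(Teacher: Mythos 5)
Your proposal is correct and follows essentially the same route as the paper: substitute the identity of Lemma~\ref{Lemma5.4}(3) into (\ref{5.9}), collect the coefficients of $(t^2D_q)^i\partial_z^{\alpha}$, and read off the orders from the definitions of the $b^{*}_{j,\alpha}$ (the paper leaves this bookkeeping implicit, while you spell out the case analysis, the $i=0$ convention, and the fact that only $j=i$ survives at $t=0$, all of which check out).
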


%
%

\subsubsection{$q$-Convolution equation}\label{subsubsection 5.2.3}

   By Lemma \ref{Lemma5.5} we see that the equation (\ref{5.10}) 
is written in the form
\begin{align*}
    &\sum_{0 \leq i<m_0} t^{m_0-i}A_{i,0}^0(t,z) (t^{2}D_q)^iX^0 
     \\
    &\qquad 
      +\sum_{m_0 \leq i \leq m} A_{i,0}(0,z)(t^{2}D_q)^iX^0  
     +\sum_{m_0 \leq i \leq m}tA_{i,0}^0(t,z)(t^{2}D_q)^iX^0 \\
    &\qquad + \sum_{0 \leq i<m_0, |\alpha|>0}
           t^{m_0-i+1}A_{i,\alpha}^0(t,z)(t^{2}D_q)^i 
           \partial_z^{\alpha}X^0 \\
    &\qquad + \sum_{m_0 \leq i <m, |\alpha|>0}
           t^{2}A_{i,\alpha}^0(t,z)(t^{2}D_q)^i 
           \partial_z^{\alpha}X^0 = t^{m_0}F^0(t,z)
\end{align*}
for some holomorphic functions $A_{i,\alpha}^0(t,z)$ 
($i+\sigma |\alpha| \leq m$) in a neighborhood of 
$(0,0) \in \BC_t \times \BC_z^d$. We set
\[
    u(\xi,z)= \hat{\mathscr B}_q[X^0](\xi,z)
          = \sum_{n \geq \mu} \frac{X_n^0(z)}{[n]_q!} \xi^n.
\]
By Proposition \ref{Proposition5.1} we know that $u(\xi,z)$ is a 
holomorphic function in a neighborhood of 
$(0,0) \in \BC_{\xi} \times \BC_z^d$. By applying $q$-formal Borel 
transform $\hat{\mathscr B}_q$ to the above equation and by using 
(\ref{3.9}) and (3-4-4) we have
\begin{align*}
    &\sum_{0 \leq i<m_0} {\mathscr B}_q
                  [t^{m_0-i}A_{i,0}^0(t,z)] *_q (\xi^i u) \\
    &\qquad +\sum_{m_0 \leq i \leq m} A_{i,0}(0,z)(\xi^iu) 
          +\sum_{m_0 \leq i \leq m} 
            {\mathscr B}_q[tA_{i,0}^0(t,z)] *_q (\xi^i u) \\
    &\qquad + \sum_{0 \leq i<m_0, |\alpha|>0}
           {\mathscr B}_q[t^{m_0-i}A_{i,\alpha}^0(t,z)] *_q 
                        (1*_q(\xi^i \partial_z^{\alpha}u))\\
    &\qquad + \sum_{m_0 \leq i <m, |\alpha|>0}
           {\mathscr B}_q[tA_{i,\alpha}^0(t,z)] *_q
                        (1*_q(\xi^i \partial_z^{\alpha}u)) \\
    &= {\mathscr B}_q[t^{m_0}F^0(t,z)].
\end{align*}
Thus, by setting 
\begin{align*}
    &c_{i,\alpha}(\xi,z)= {\mathscr B}_q[t^{m_0-i}A_{i,\alpha}^0(t,z)]
        \quad \mbox{(for $0 \leq i<m_0$)}, \\
    &c_{i,\alpha}(\xi,z)= {\mathscr B}_q[tA_{i,\alpha}^0(t,z)]
        \quad \mbox{(for $m_0 \leq i \leq m$)}, \\
    &P(\xi,z)= \sum_{m_0 \leq i \leq m} A_{i,0}(0,z)\xi^i, \\
    &f(\xi,z)= {\mathscr B}_q[t^{m_0}F^0(t,z)]
\end{align*}
we have a $q$-convolution partial differential equation
\begin{align}
    P(\xi,z)u &+ \sum_{0 \leq i \leq m}
        c_{i,0}(\xi,z) *_q (\xi^i u) \label{5.11}\\
   &+ \sum_{i+\sigma |\alpha| \leq m, |\alpha|>0}
     c_{i,\alpha}(\xi,z) *_q (1 *_q (\xi^i \partial_z^{\alpha}u))
        = f(t,z).  \notag
\end{align}
By the definition of $c_{i,\alpha}(\xi,z)$ and $f(\xi,z)$ we see 
that they are holomorphic functions on $\BC_{\xi} \times D_R$ for 
some $R>0$ and we have
\begin{align*}
    &|f(\xi,z)| \leq C \phi_N(|\xi|;h) \quad 
          \mbox{on $\BC \times D_R$ \, \, (with
                $N=m_0+\mu$)}, \\
    &c_{i,\alpha}(\xi,z) \ll C_{i,\alpha} \phi_{m_0-i-1}(\xi;h_0) 
         \quad \mbox{for any $z \in D_R$}
           \quad (0 \leq i<m_0), \\
    &c_{i,\alpha}(\xi,z) \ll C_{i,\alpha} \phi_{0}(\xi;h_0) 
         \quad \mbox{for any $z \in D_R$}
           \quad (m_0 \leq i \leq m)
\end{align*}
for some $C>0$, $h>h_0>0$ and $C_{i,\alpha}>0$ 
($i+\sigma |\alpha| \leq m$). We note:
\[
    P(\xi,z)= \sum_{m_0 \leq i \leq m} A_{i,0}(0,z)\xi^i
           = \sum_{m_0 \leq i \leq m}
                  \frac{b_{i,0}(0,z)}{q^{i(i-1)/2}} \xi^i
          = \xi^{m_0}P_0(\xi,z)
\]
where $P_0(\xi,z)$ is the one appearing in Lemma \ref{Lemma2.2}.

%
%

\subsubsection{Holomorphic extension of $u(\xi,z)$}
                   \label{subsubsection 5.2.4}

    Take any $\lambda \in \BC \setminus (\{0\} \cup S)$.
By Lemm \ref{Lemma2.2} we have a $\delta>0$, an interval 
$I=(\theta_1, \theta_2)$ with
$\theta_1<\arg \lambda <\theta_2$ and an $R>0$ such that
\[
    |P(\xi,z)| \geq \delta |\xi|^{m_0}(1+|\xi|)^{m-m_0}
    \quad \mbox{on $S_I \times D_R$}.
\]
Since $\mu$ is taken sufficiently large, we may suppose that
$N=m_0+\mu$ satisfies
\[
     \frac{1}{[N]_q} \sum_{0 \leq i<m_0}
           \frac{C_{i,0}}{\delta (1-h_0/h)} <1.
\]
Thus, we can apply Proposition 4.1 to the equation (\ref{5.11}).
This shows that $u(\xi,z)$ has an analytic extension $u^*(\xi,z)$
to the domain $S_I \times D_{R_1}$ (for some $R_1>0$) as a 
solution of (\ref{5.11}), and we have the estimate
\begin{equation}
    |u^*(\xi,z)| \leq \frac{M_1}{|\xi|^{m_0}(1+|\xi|)^{m-m_0}}
                 \phi_N(|\xi|;h_1) \quad 
        \mbox{on $S_I(r) \times D_{R_1}$}  \label{5.12}
\end{equation}
for some $M_1>0$ and $h_1>0$.

%
%

\subsubsection{Completion of the proof of Theorem 2.3}
                   \label{subsubsection 5.2.5}

    To complete the proof of Theorem \ref{Theorem2.3} it is enough 
to show Lemma \ref{Lemma5.6} given below. If this is true, by 
setting
\[
    W(t,z)= \sum_{0 \leq n \leq \mu}X_n(z)t^n
         + {\mathscr L}_q^{\lambda}[u^*](t,z)
\]
we have a true solution of (\ref{1.1}) desired in 
Theorem \ref{Theorem2.3}.

\begin{lem}\label{Lemma5.6}
    There are $C>0$, $h>0$, $A>0$ and $0<B<q$ such that
\begin{align}
   &|u^*(\lambda q^n,z)| \leq Ch^n [n]_q! \quad
      \mbox{on $D_{R_1}$ for $n=0,1,2,\ldots$}, \label{5.13}\\
   &|u^*(\lambda q^{-m},z)| \leq AB^m \quad 
      \mbox{on $D_{R_1}$ for $m=1,2,\ldots$}. \label{5.14}
\end{align}
\end{lem}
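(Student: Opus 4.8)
The two estimates concern the values of $u^*(\xi,z)$ at the points $\xi=\lambda q^k$, $k\in\BZ$, and both should follow from the single estimate \eqref{5.12}, namely
\[
   |u^*(\xi,z)|\leq \frac{M_1}{|\xi|^{m_0}(1+|\xi|)^{m-m_0}}\,\phi_N(|\xi|;h_1)
   \quad\text{on } S_I(r)\times D_{R_1},
\]
combined with the growth of $\phi_N$ recorded in \S4.1. The one extra ingredient needed is that, although \eqref{5.12} is stated on $S_I(r)$ for some $r>0$, Proposition \ref{Proposition4.1} actually delivers $u^*$ on all of $S_I(r)\times D_{R_1}$ with $r$ as large as we please (the $q$-convolution equation \eqref{5.11} has coefficients holomorphic on $\BC_\xi\times D_R$, so $r=\infty$ is allowed); since $\lambda\in S_I$ and hence $\lambda q^k\in S_I$ for every $k\in\BZ$, the bound \eqref{5.12} is available at all the points $\lambda q^k$.

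\textbf{Step 1: the points $\xi=\lambda q^{-m}$, $m\geq 1$ (estimate \eqref{5.14}).} Here $|\xi|=|\lambda|q^{-m}\to 0$ as $m\to\infty$, so $(1+|\xi|)^{m-m_0}\geq 1$ and $\phi_N(|\xi|;h_1)=\sum_{i\geq0}h_1^i|\xi|^{N+i}/[N+i]_q!\leq |\xi|^N\exp_q(h_1|\xi|)/[N]_q!\leq C'|\xi|^N$ uniformly for $|\xi|\leq|\lambda|$. Plugging into \eqref{5.12},
\[
   |u^*(\lambda q^{-m},z)|\leq \frac{M_1 C'}{[N]_q!}\,|\xi|^{N-m_0}
   = \frac{M_1 C'|\lambda|^{\,N-m_0}}{[N]_q!}\,(q^{-(N-m_0)})^{m}.
\]
Since $N=m_0+\mu$ with $\mu\geq1$, we have $N-m_0=\mu\geq1$, so $B:=q^{-(N-m_0)}=q^{-\mu}$ satisfies $0<B<1<q$, and \eqref{5.14} holds with $A=M_1C'|\lambda|^{\mu}/[N]_q!$.

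\textbf{Step 2: the points $\xi=\lambda q^{n}$, $n\geq 0$ (estimate \eqref{5.13}).} Now $|\xi|=|\lambda|q^n\to\infty$. Drop the denominator $|\xi|^{m_0}(1+|\xi|)^{m-m_0}\geq 1$ (for $q^n\geq 1$, say; the finitely many remaining $n$ are absorbed into the constant), so \eqref{5.12} gives $|u^*(\lambda q^n,z)|\leq M_1\phi_N(|\lambda|q^n;h_1)$. Using $\phi_N(x;h_1)\leq x^N\exp_q(h_1 x)/[N]_q!$ and Lemma \ref{Lemma4.3}, $\log\exp_q(h_1 x)=(\log(h_1x))^2/(2\log q)+O(\log x)$, so $\exp_q(h_1|\lambda|q^n)\leq C'' q^{\,n^2/2}\,h^{n}$ for a suitable $h>0$ and all $n$. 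Since $[n]_q!\geq q^{n(n-1)/2}=q^{n^2/2}q^{-n/2}$, the factor $q^{n^2/2}$ is exactly what $[n]_q!$ supplies: we get $|u^*(\lambda q^n,z)|\leq M_1 C''(|\lambda|q)^N\,\tilde h^{\,n}\,[n]_q!$ for a suitable $\tilde h>0$, after adjusting the base of the geometric factor to swallow the $|\lambda|^n$, $q^{n/2}$ and $q^{nN}$ terms. This is \eqref{5.13}.

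\textbf{Main obstacle.} None of this is deep; the only point that needs care is the bookkeeping in Step 2 — matching the $q$-Gaussian growth $\exp_q(h_1|\xi|)\sim q^{(\log|\xi|)^2/2\log q}$ against $[n]_q!\sim q^{n(n-1)/2}$ so that the super-exponential parts cancel exactly and only a genuine geometric factor $h^n$ and the single $[n]_q!$ remain. This is precisely the content of Lemma \ref{Lemma4.3} (equivalently Proposition \ref{Proposition3.5}), and once invoked it makes \eqref{5.13} routine. One should also double-check at the outset that \eqref{5.12} indeed holds on $S_I(\infty)$, i.e. that the hypotheses $\mathrm{h}_1)$–$\mathrm{h}_3)$ of Proposition \ref{Proposition4.1} were verified with $r=\infty$ in \S5.2.3–\S5.2.4; this is the case because the $c_{i,\alpha}$ are entire in $\xi$.
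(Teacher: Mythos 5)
Your proof is correct and, for the main estimate (\ref{5.13}), follows the paper's own argument essentially verbatim: drop the denominator in (\ref{5.12}), bound $\phi_N(|\xi|;h_1)$ by $|\xi|^N\exp_q(h_1|\xi|)/[N]_q!$, invoke Lemma \ref{Lemma4.3} to get the $q^{n^2/2}$ growth, and absorb it into $[n]_q!\ge q^{n(n-1)/2}$. For (\ref{5.14}) the paper takes an even shorter route---$u^*$ coincides near $\xi=0$ with the convergent series $\hat{\mathscr B}_q[X^0]$, so the values at $\lambda q^{-m}\to 0$ are simply bounded and $B=1$ already works---whereas your derivation from (\ref{5.12}) yields the stronger geometric decay $B=q^{-\mu}$; both are acceptable, and your preliminary remark that Proposition \ref{Proposition4.1} is applied with $r=\infty$ (so that (\ref{5.12}) is available at all points $\lambda q^k$, $k\in\BZ$) correctly identifies the one point that needs checking.
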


\begin{proof}
    Since $u^*(\xi,z)$ is a holomorphic
function in a neighborhood of $(0,0) \in \BC_{\xi} \times \BC_z^d$, 
by setting $B=1$ we have the condition (\ref{5.14}) for a sufficiently 
large $A>0$. By (\ref{5.12}) and Lemma \ref{Lemma4.3} we have
\begin{align*}
    |u^*(\xi,z)| 
     &\leq \frac{M_1}{|\xi|^{m_0}(1+|\xi|)^{m-m_0}} 
         \frac{|\xi|^N}{[N]_q!}
              \exp_q(h_1|\xi|) \\
   &\leq \frac{M_1|\xi|^{N-m_0}}{[N]_q!} \times \\
   &\qquad \times K_1
           \exp \Bigl( \frac{(\log(h_1|\xi|))^2}{2 \log q}
          + \Bigl(-\frac{1}{2}+ \frac{\log(q-1)}{\log q} 
           \Bigr) \log (h_1|\xi|) \Bigr) \\
   &\leq M_2\exp \Bigl( \frac{(\log |\xi|)^2}{2 \log q}
          + \alpha \log |\xi| \Bigr) \quad
        \mbox{on $S_I \times D_{R_1}$} 
\end{align*}
for some $K_1>0$, $M_2>0$ and $\alpha \in \BR$. Hence, we obtain
\[
   |u^*(\lambda q^n,z)|
   \leq M_2 (|\lambda|q^{\alpha})^n q^{n^2/2}
        \exp \Bigl(\frac{(\log |\lambda|)^2}{2 \log q}
              + \alpha \log |\lambda| \Bigr) \quad
      \mbox{on $D_{R_1}$}
\]
for $n=0,1,2,\ldots$. Since $q^{n(n-1)/2} \leq [n]_q!$ 
holds, we have the result (\ref{5.13}).
\end{proof}

%
%
%

\section{The case without (\ref{2.2})}\label{section 6}

   In Theorem \ref{Theorem2.3}, we have shown the $G_q$-summability
of thr formal solution (\ref{1.2}) under the additional assumption 
(\ref{2.2}). Let us consider here the case without the assumption 
(\ref{2.2}).  We note:

\begin{lem}\label{Lemma6.1}
    Let $f(t)$ be a function in $t$, and
let $n \in \BN^*$. We set $F(\tau)=f(t)$ with $t=\tau^n$; then 
we have
\begin{equation}
      t D_q(f)(t)= \frac{1}{[n]_{q^{1/n}}} 
             \tau D_{q^{1/n}}(F)(\tau).  \label{6.1}
\end{equation}
\end{lem}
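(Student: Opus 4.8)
The plan is a direct computation, with no real obstacle beyond careful bookkeeping of which base each $q$-integer and each difference operator is taken with respect to. Write $q_1 = q^{1/n}$, so that $q_1 > 1$, $q_1^n = q$, and, straight from the definition of the $q_1$-integer, $[n]_{q_1} = (q_1^n-1)/(q_1-1) = (q-1)/(q_1-1)$. Note that $D_{q_1}$ is the operator of exactly the same shape as $D_q$ but with base $q_1$ in place of $q$, so $D_{q_1}(g)(\tau) = (g(q_1\tau)-g(\tau))/(q_1\tau - \tau)$.

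First I would rewrite both sides of (\ref{6.1}) as plain difference quotients. From the definition of $D_q$ one gets $tD_q(f)(t) = t\cdot\dfrac{f(qt)-f(t)}{qt-t} = \dfrac{f(qt)-f(t)}{q-1}$, and in the same way $\tau D_{q_1}(F)(\tau) = \dfrac{F(q_1\tau)-F(\tau)}{q_1-1}$.

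Next I would substitute $t = \tau^n$ to evaluate the shifted arguments of $F$. Since $F(\tau) = f(\tau^n)$, replacing $\tau$ by $q_1\tau$ gives $F(q_1\tau) = f\bigl((q_1\tau)^n\bigr) = f(q_1^n\tau^n) = f(q\tau^n) = f(qt)$, while $F(\tau) = f(\tau^n) = f(t)$. Hence $\tau D_{q_1}(F)(\tau) = \dfrac{f(qt)-f(t)}{q_1-1}$.

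Finally, dividing by $[n]_{q_1}$ and using the identity $[n]_{q_1} = (q-1)/(q_1-1)$ established above, I would conclude $\dfrac{1}{[n]_{q_1}}\,\tau D_{q_1}(F)(\tau) = \dfrac{q_1-1}{q-1}\cdot\dfrac{f(qt)-f(t)}{q_1-1} = \dfrac{f(qt)-f(t)}{q-1} = tD_q(f)(t)$, which is precisely (\ref{6.1}). The only point requiring attention is not to conflate the base-$q$ and base-$q_1$ quantities; everything else is elementary.
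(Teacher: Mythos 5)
Your proof is correct and is essentially the same direct computation as the paper's: both reduce each side to the difference quotient $\bigl(f(qt)-f(t)\bigr)/(q-1)$ using $F(q^{1/n}\tau)=f(q\tau^n)=f(qt)$ and the identity $[n]_{q^{1/n}}=(q-1)/(q^{1/n}-1)$. The only difference is cosmetic (you cancel the factor of $t$ at the outset and work from the right-hand side), so nothing further is needed.
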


\begin{proof}
    By the definition we have
\begin{align*}
    &t D_q(f)(t) = t \times \frac{f(qt)-f(t)}{(q-1)t}
     = \tau^n \times \frac{f(q\tau^n)-f(\tau^n)}{(q-1)\tau^n} \\
    &= \tau^n \times \frac{F(q^{1/n}\tau)-F(\tau)}{(q-1)\tau^n} \\
    &=  \frac{q^{1/n}-1}{(q^{1/n})^n-1} \times \tau
           \times \frac{F(q^{1/n}\tau)-F(\tau)}{(q^{1/n}-1)\tau}
    = \frac{1}{[n]_{q^{1/n}}} 
             \tau D_{q^{1/n}}(F)(\tau).
\end{align*}
\end{proof}

\begin{lem}\label{Lemma6.2}
     Let $n \in \BN^*$: we have 
\begin{equation}
     t D_{q^n}= \frac{q-1}{q^n-1} \sum_{i=0}^{n-1}
        \bigl( (q-1) tD_q+1 \bigr)^i (tD_q).  \label{6.2}
\end{equation}
Note that $D_{q^n}$ in the left-hand side is $q^n$-derivative
and $D_q$ in the right-hand side is $q$-derivative.
\end{lem}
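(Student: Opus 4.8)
The plan is to recast everything in terms of the $q$-dilation operator $T_q$ acting on functions of $t$ by $(T_qf)(t)=f(qt)$. From the definition of $D_q$ one reads off the operator identity $tD_q=(T_q-1)/(q-1)$, equivalently $(q-1)tD_q+1=T_q$. This is the crucial observation: the operator $(q-1)tD_q+1$ occurring on the right-hand side of (\ref{6.2}) is nothing but the dilation $t\mapsto qt$, so that the seemingly complicated powers in (\ref{6.2}) are just iterated dilations. Note also that the same formula, applied with $q$ replaced by $q^n$, reads $tD_{q^n}=(T_{q^n}-1)/(q^n-1)=(T_q^{\,n}-1)/(q^n-1)$, since $T_q^{\,n}=T_{q^n}$.

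Granting this, I would compute the right-hand side of (\ref{6.2}) directly. Using $\bigl((q-1)tD_q+1\bigr)^i=T_q^{\,i}$ and $tD_q=(T_q-1)/(q-1)$, one gets
\[
\frac{q-1}{q^n-1}\sum_{i=0}^{n-1}\bigl((q-1)tD_q+1\bigr)^i(tD_q)
=\frac{1}{q^n-1}\sum_{i=0}^{n-1}T_q^{\,i}(T_q-1)
=\frac{1}{q^n-1}\sum_{i=0}^{n-1}\bigl(T_q^{\,i+1}-T_q^{\,i}\bigr),
\]
which telescopes to $(T_q^{\,n}-1)/(q^n-1)=tD_{q^n}$. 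This is exactly (\ref{6.2}), so the lemma follows.

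I do not expect any genuine obstacle: the whole statement collapses to a one-line telescoping identity once $(q-1)tD_q+1$ is identified with $T_q$. The only points that need a little care are to perform the manipulation purely at the operator level — so that the binomial-type powers legitimately reduce to $T_q^{\,i}$ rather than requiring expansion — and to verify the endpoint arithmetic of the telescoping sum; both are routine. As a sanity check one may alternatively test the identity on monomials $f(t)=t^k$: the left side gives $[k]_{q^n}t^k$, while the right side gives $\frac{q-1}{q^n-1}\sum_{i=0}^{n-1}q^{ik}[k]_q\,t^k=[k]_{q^n}t^k$ by the finite geometric sum, and since both sides act $\BC$-linearly and continuously on formal power series this already forces equality; but the operator-telescoping argument above is the cleanest.
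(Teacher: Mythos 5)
Your proof is correct and is essentially the same as the paper's: both arguments rest on identifying $(q-1)tD_q+1$ with the dilation operator $\sigma_q\colon f(t)\mapsto f(qt)$ and on the telescoping identity $\sigma_q^{\,n}-1=(\sigma_q-1)\sum_{i=0}^{n-1}\sigma_q^{\,i}$; you merely run the computation from right to left, whereas the paper expands $f(q^nt)-f(t)$ as a telescoping sum and then substitutes $\sigma_q=(q-1)tD_q+1$ at the end.
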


\begin{proof}
    By the definition we have
\begin{align*}
   &tD_{q^n}(f)(t) = t \times \frac{f(q^nt)-f(t)}{(q^n-1)t} \\
   &= t \times \frac{q-1}{q^n-1} \times \frac{(f(q^nt)-f(q^{n-1}t))
      + \cdots +(f(qt)-f(t))}{(q-1)t} \\
   &= \frac{q-1}{q^n-1} \bigl( q^{n-1} tD_q(f)(q^{n-1}t)
          + \cdots+ qtD_q(f)(qt)+tD_q(f)(t) \bigr).
\end{align*}
Therefore, by using the operator $\sigma_q$ defined by
$\sigma_q(f(t))=f(qt)$ we have
\[
    tD_{q^n}(f)(t)
    = \frac{q-1}{q^n-1}\bigl( \sigma_q^{n-1}+ \cdots
              + \sigma_q+1 \bigr) (tD_q)(f)(t).
\]
Since $\sigma_q=(q-1)tD_q+1$ holds, we have (\ref{6.2}).
\end{proof}

\begin{cor}\label{Corollary6.3}
    For any $m \in \BN^*$ and $n \in \BN$ we have
\begin{equation}
      [m]_{q^n}= \frac{1}{[n]_q} \sum_{i=0}^{n-1}
           \bigl( (q-1)[m]_q+1 \bigr)^i [m]_q. 
      \label{6.3}
\end{equation}
\end{cor}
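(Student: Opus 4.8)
The plan is to deduce (\ref{6.3}) by evaluating the operator identity (\ref{6.2}) of Lemma \ref{Lemma6.2} on the single monomial $f(t)=t^m$ and then reading off the coefficient of $t^m$ on the two sides.

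First I would record the action of $tD_q$ on powers. Since $D_q[t^m]=[m]_qt^{m-1}$, the monomial $t^m$ is an eigenfunction of $tD_q$ with eigenvalue $[m]_q$; hence for any polynomial $P$ the operator $P(tD_q)$ sends $t^m$ to $P([m]_q)\,t^m$. In particular
\[
   \bigl((q-1)tD_q+1\bigr)^i(tD_q)(t^m)
   = \bigl((q-1)[m]_q+1\bigr)^i[m]_q\,t^m ,
\]
and likewise $tD_{q^n}(t^m)=[m]_{q^n}t^m$ with $[m]_{q^n}=(q^{nm}-1)/(q^n-1)$.

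Next I would substitute these evaluations into (\ref{6.2}). The left-hand side becomes $[m]_{q^n}t^m$, while the right-hand side becomes
\[
   \frac{q-1}{q^n-1}\sum_{i=0}^{n-1}\bigl((q-1)[m]_q+1\bigr)^i[m]_q\,t^m .
\]
Cancelling the common factor $t^m$ and using $\dfrac{q-1}{q^n-1}=\dfrac{1}{[n]_q}$ gives exactly (\ref{6.3}).

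There is no real obstacle here; the only point worth a remark is the degenerate case $n=0$ in the statement, for which $1/[n]_q$ and the empty sum are to be read in the obvious limiting sense, the identity then reducing to the trivial $[m]_1=m$. Alternatively, one can avoid Lemma \ref{Lemma6.2} altogether and verify (\ref{6.3}) by a direct computation: since $(q-1)[m]_q+1=q^m$, the right-hand side of (\ref{6.3}) telescopes,
\[
   \frac{1}{[n]_q}\,[m]_q\sum_{i=0}^{n-1}q^{mi}
   = \frac{q-1}{q^n-1}\cdot\frac{q^m-1}{q-1}\cdot\frac{q^{mn}-1}{q^m-1}
   = \frac{q^{mn}-1}{q^n-1}=[m]_{q^n}.
\]
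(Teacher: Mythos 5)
Your proposal is correct and follows exactly the paper's route: the paper's proof is the one-line ``By applying (\ref{6.2}) to $t^m$ we have this result,'' which is precisely your eigenfunction evaluation. The additional direct telescoping verification via $(q-1)[m]_q+1=q^m$ is a nice independent check but not needed.
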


\begin{proof}
   By applying (\ref{6.2}) to $t^m$ we have this result.
\end{proof}

\par
\medskip
   {\it Discussion in the case without {\rm (\ref{2.2})}.}
We set $t=\tau^2$ and $q_1=q^{1/4}$.
Then, by applying Lemmas \ref{Lemma6.1} and \ref{Lemma6.2} we can 
see that our equation (\ref{1.1}) is written in the form
\begin{equation}
    \sum_{j+\sigma |\alpha| \leq m}
            A_{j,\alpha}(\tau,z) \biggl( \frac{1}{[4]_{q_1}}
           \bigl( (q_1-1)(\tau D_{q_1})^2 
      +  2(\tau D_{q_1}) \bigr) \biggr)^j \partial_z^{\alpha}Y
     = G(\tau,z) \label{6.4}
\end{equation}
where 
\begin{align*}
    &A_{j,\alpha}(\tau,z)= a_{j,\alpha}(\tau^2,z) \quad
            (j+\sigma |\alpha| \leq m), \\
    &Y(\tau,z)
          =X(\tau^2,z) = \sum_{n \geq 0} X_n(z) \tau^{2n}, \\
    &G(\tau,z)= F(\tau^2,z).
\end{align*}
In this case, the $t$-Newton polygon $N_t(6.4)$ of (\ref{6.4}) is 
\[
    N_t(6.4)= \{(x,y) \in \BR^2 \,;\, 
               x \leq 2m, y \geq \max\{0, x-2m_0 \} \},
\]
and we have
\begin{align*}
    &\mathrm{ord}_t(A_{j,\alpha}) \geq 
    \left\{ \begin{array}{ll}
     \max \{0, 2j-2m_0 \}, &\mbox{if $|\alpha|=0$}, \\
     \max \{2, 2j-2m_0+2 \}, &\mbox{if $|\alpha|>0$}.
           \end{array}  \right. 
\end{align*}
Therefore, the condition corresponding to (\ref{2.2}) is satisfied.
Thus, we can apply Theorem \ref{Theorem2.3} to the $q_1$-difference 
equation (\ref{6.4}) and we have $G_{q_1}$-summability of the formal 
solution $Y(\tau,z)$.

%
%

%
%
%
\par
\bigskip


\noindent
Hidetoshi Tahara 

\noindent
Department of Information and Communication 
       Sciences, \\
       Sophia University, \\
      Kioicho, Chiyoda-ku, Tokyo 102-8554, Japan.

\noindent
Email: h-tahara@sophia.ac.jp

%
%
\end{document}